\numberwithin{equation}{section}
\title[Inertia perturbation of the Kuramoto model]{Inertia perturbation theory for the inertial Kuramoto model}
\author[Cho]{Hangjun Cho}
\address[Hangjun Cho]{\newline Department of Mechanical Engineering \newline University of Washington, Seattle, WA 98195, United States of America}
\email{hangjun@uw.edu}
\author[Dong]{Jiu-Gang Dong}
\address[Jiu-Gang Dong]{\newline School of Mathematical Sciences \newline Dalian University of Technology, Dalian 116024, People's Republic of China}
\email{jgdong@dlut.edu.cn}
\author[Ha]{Seung-Yeal Ha}
\address[Seung-Yeal Ha]{\newline Department of Mathematical Sciences and Research Institute of Mathematics \newline Seoul National University, Seoul 08826, Republic of Korea}
\email{syha@snu.ac.kr}
\author[Ryoo]{Seung-Yeon Ryoo}
\address[Seung-Yeon Ryoo]{\newline The Division of Physics, Mathematics and Astronomy \newline California Institute of Technology, Pasadena California 91125, United States of America}
\email{sryoo@caltech.edu}
\newtheorem{theorem}{Theorem}[section]
\newtheorem{lemma}{Lemma}[section]
\newtheorem{proposition}{Proposition}[section]
\newtheorem{remark}{Remark}[section]
\newtheorem{definition}{Definition}[section]
\newtheorem{question}{Question}[section]
\newtheorem{conjecture}{Conjecture}[section]
\newcommand{\bbr}{\mathbb{R}}
\begin{document}

\date{\today}

\subjclass{34E15 (Primary) 34D06, 34C15, 82C22 (Secondary)}
\keywords{Inertia, Kuramoto oscillator, phase-locking, relaxation dynamics, zero inertia limit}

\thanks{\textbf{Acknowledgment.} 
The work of H. Cho was partially supported by the National Research Foundation (NRF) of Korea grant funded by the Korea government (MSIT) (RS-2023-00253171), the work of J.-G. Dong was supported by the National Natural Science Foundation of China through grant 12171069, the work of S.-Y. Ha was supported by NRF grant (NRF-RS2025-00514472), and the work of S.-Y. Ryoo was partially supported by the  Korea Foundation for Advanced Studies.}

\begin{abstract}
In this work, we study the inertial Kuramoto model, which is a second-order extension of the classical first-order Kuramoto model, as an inertial perturbation of the first-order Kuramoto model. We develop a quantitative Tikhonov theorem, from which we derive a new synchronization statement in the small inertia regime, with strong bounds on the limiting order parameter. We also explore the determinability of phase velocities from phase positions, which shows that the perturbation viewpoint must be limited to the small inertia regime.
This paper complements our recent work (2025), where we established asymptotic phase-locking of inertial Kuramoto oscillators under generic initial conditions in the low inertia-high coupling regime.
\end{abstract}

\maketitle

\tableofcontents

%
%
%
%
\section{Introduction} \label{sec:1}
\setcounter{equation}{0}

\subsection{Formulation of the models and previous work}
We fix a positive integer $N$ representing the number of oscillators, and for each $i\in[N]\coloneqq \{1,\cdots,N\}$, let $\theta_i=\theta_i(t)\in\mathbb{R}$ denote the phase and $\omega_i=\dot\theta_i$ the (instantaneous) frequency of the $i$-th oscillator, where both are defined as real-valued functions of time $t\geq0$. The evolution of the phase variables $\{\theta_i\}_{i=1}^N$ under the \emph{inertial Kuramoto dynamics} is described by the following Cauchy problem:
\begin{equation}
\begin{cases} \label{A-1}
\displaystyle m \ddot\theta_i + \dot\theta_i = \nu_i +\frac{\kappa}{N}\sum_{j=1}^N \sin(\theta_j - \theta_i),\quad t > 0,\\
\displaystyle (\theta_i, {\dot \theta}_i) \Big|_{t = 0+} = (\theta_i^0, \omega_i^0), \quad i\in [N].
\end{cases}
\end{equation}
Here, the constants $m$ and $\kappa$ denote the (uniform\footnote{The uniformity means that we are prescribing the same $m$ to each $\theta_i$; one may consider a model where each particle $\theta_i$ has its own inertia $m_i$.} positive) \emph{inertia} and \emph{coupling strength}, respectively, and are both assumed to be nonnegative; the real number $\nu_i$ denotes the natural (intrinsic) frequency of the $i$-th oscillator for $i\in [N]$.

In comparison, the \emph{first-order Kuramoto model,} originally proposed by Kuramoto in \cite{Ku2,Ku}, is the model formally obtained from \eqref{A-1} by taking zero inertia $m=0$:
\begin{align}\label{A-2}
	\begin{cases}
		\displaystyle \dot\theta_i = \nu_i + \frac{\kappa}{N}\sum_{j=1}^N \sin(\theta_j - \theta_i),\quad t > 0,\\
		\displaystyle \theta_i\Big|_{t = 0+} = \theta_i^0,\quad i\in [N].
	\end{cases}
\end{align}

By the classical Cauchy-Lipschitz theory, both Cauchy problems \eqref{A-1} and \eqref{A-2} admit global unique solutions. Moreover, due to the analyticity of the vector fields associated with the systems, the solutions are real analytic in time and with respect to all parameters, as guaranteed by the Cauchy-Kovalevskaya theorem. Therefore, issues such as uniqueness, global existence, and (qualitative) smoothness of solutions will not be a concern in this paper.

The Cauchy problem \eqref{A-2} was proposed as a pioneering mathematical model for {\it synchronization}, which broadly refers to the adjustment of rhythms in coupled oscillatory systems, or the emergence of consensus in phases and frequencies among interacting units. The main goal of studying synchronization models is to rigorously describe \emph{emergent behavior}, which refers to the progressive development of synchronization from initially uncoordinated states; this is frequently observed in nature, for example, in fish swarming, bird flocking, or bacterial aggregation \cite{A-B-F-H-P-P-J,B-B,P-R-K,T-B,V-Z}. Despite its ubiquity, a rigorous mathematical treatment of synchronization was not initiated until about fifty years ago by Winfree \cite{Wi2,Wi1} and Kuramoto \cite{Ku2,Ku} who proposed simplified phase models describing the collective dynamics of weakly coupled limit-cycle oscillators and revealed essential synchronization mechanisms.

The first-order Kuramoto model \eqref{A-2} is often used as a prototype synchronization system, on which various modifications are made to better describe a physical system of interest. The inertial Kuramoto model \eqref{A-1} is a modification of the first-order Kuramoto model \eqref{A-2} and was introduced independently\footnote{A cursory literature search suggest that the model of Bergen and Hill \cite{B-H} in 1981 was inspired by \cite{A-F,Fo,R-P}, each published in 1977, 1975, 1971, respectively, which seem to precede the works of Kuramoto \cite{Ku,Ku2} in 1975 and 1984, respectively. The work of Ermentrout \cite{Er} does not cite Kuramoto \cite{Ku,Ku2} directly nor refers to the work of Bergen and Hill \cite{B-H}, but refers to the work of Strogatz and Mirollo \cite{S-M} which concerns itself with the Kuramoto model \eqref{A-2}. We cautiously suspect that the inertial Kuramoto model arose independently in the power systems community and the mathematical biology community, with the latter being directly influenced by Kuramoto. We do not claim with absolute certainty that \cite{B-H,Er} are the definitive progenitors for the inertial Kuramoto model \eqref{A-1} as we are not experts in mathematical history; these are simply the first sources we could find regarding the inertial Kuramoto model \eqref{A-1}.}
by Bergen and Hill \cite{B-H}  to describe the behavior of electric power grids with generators, and by Ermentrout \cite{Er} to model slow synchronous flashing in the firefly species Pteroptyx malaccae. This coupled oscillator system was further studied in network dynamics \cite{Rodrigues16} and was also used to formulate the swing equation of power network dynamics \cite{C, Kundur}. Due to its second-order nature, the inertial Kuramoto model possesses several novel features absent in the first-order Kuramoto model such as first-order hysteretic transition \cite{T-L-O1,T-L-O2, Olmi14,Barre16}, while maintaining interesting behavior such as bifurcation \cite{C-M22,C-M-M23}.

A central feature of both the Winfree and Kuramoto models is the presence of phase transitions: systems evolve from incoherent (disordered) states to partially synchronized (partially phase-locked) states, and eventually to fully synchronized (completely phase-locked) states, as the coupling strength $\kappa$, quantifying the interaction among oscillators, crosses critical thresholds \cite{A-S,Cr,K-B}. This transition behavior has made these models influential, and their mathematical framework has attracted significant attention from researchers in control theory, neuroscience, and statistical physics \cite{A-B,B-T11,B-T13,D-B14,Ermentrout19,H-K-P-Z,Hoppensteadt12,Rodrigues16,Str}. In the context of power systems, synchronization is interpreted as transient stability and was investigated in \cite{D13,D-B10,D-B12,D-B14,F-N-P,G-Z-L-W,S-U-S-P}.

The asymptotic behavior of the Kuramoto model \eqref{A-2} and its variants has been extensively studied in the literature \cite{Aeyels04,Bronski12,Chopra09,C15,C-H-J-K,D16,D-B11,D-B14,D-H-K,D-X,L-X-Y2,M-S07,Mirollo05,H-K-M-P,H-K-R,H-R,Jadbabaie04,Van93}. These works have established several sufficient conditions for \emph{complete synchronization}, where all oscillators asymptotically align their frequencies. Such results are typically obtained under certain assumptions: for instance, when the coupling strength is sufficiently large, the natural frequencies $\nu_i$ are heterogeneous, and the initial phases are confined to a half-circle; or when the natural frequencies are identical and the initial conditions are allowed to be generic.
Numerical simulations suggest that complete synchronization is still expected for generic initial data even when the natural frequencies $\{\nu_i\}$ are distributed, provided the coupling strength is large enough \cite{H-R}. 
More recently, works such as \cite{H-K-R,H-R} have rigorously derived sufficient conditions for phase-locking in the first-order Kuramoto model \eqref{A-2}, employing a gradient flow perspective and developing refined estimates on the phase diameter and the order parameter.

The complete synchronization problem (or asymptotic phase-locking) has been investigated for the inertial Kuramoto model \eqref{A-1} for restricted initial configurations \cite{Choi15, C-H-M, C-H-Y1, C-L, D-B12, D-B11, H-J-K, L-X, L-X-Y, W-Q}.
Numerical simulations suggest that the inertial Kuramoto model exhibits asymptotic phase-locking (as defined in Definition  \ref{D1.1}) for generic initial data in the large coupling regime. We are led to formulate the following conjecture.

\begin{conjecture}[The pathwise critical coupling strength and critical coupling strength coincide]\label{conj:coincide}
	For fixed natural frequencies $\nu_1,\ldots,\nu_N$, there exists a finite value $\kappa_c = \kappa_c(\nu_1,\cdots,\nu_N)$ such that the following hold.
	\begin{enumerate}
		\item If $\kappa\ge \kappa_c$, then for Lebesgue a.e.~initial data $(\Theta^0,\Omega^0)$, system \eqref{A-1} exhibits asymptotic phase-locking.
		\item If $\kappa< \kappa_c$, then system \eqref{A-1} does not possess phase-locked states.
	\end{enumerate}
\end{conjecture}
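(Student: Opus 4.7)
The plan is to reduce both parts of the conjecture to the structure of equilibria of \eqref{A-1} in a co-rotating frame, combining a global Lyapunov functional with LaSalle's invariance principle and a hyperbolic stable-manifold argument to handle the basin-of-attraction statement.

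First I would observe that any phase-locked solution of \eqref{A-1} must have $\dot\theta_i\equiv \omega^\infty$ and $\ddot\theta_i\equiv 0$; averaging the equations forces $\omega^\infty = \bar\nu := \frac{1}{N}\sum_i \nu_i$, so in the co-moving variables $\phi_i := \theta_i - \bar\nu t$ the phase-locked states of \eqref{A-1} are exactly the equilibria of
\[m\ddot\phi_i + \dot\phi_i = (\nu_i - \bar\nu) + \frac{\kappa}{N}\sum_{j=1}^N \sin(\phi_j - \phi_i),\]
which coincide with the phase-locked states of the first-order model \eqref{A-2}. I would therefore define $\kappa_c$ as the infimum of couplings for which this algebraic system admits a solution on $\mathbb{T}^N$; finiteness of $\kappa_c$ is classical. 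Part (2) is now immediate: for $\kappa < \kappa_c$ there are no such equilibria, hence no phase-locked states of \eqref{A-1}.

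For part (1), I would exploit the mechanical-energy functional
\[V = \frac{m}{2}\sum_{i=1}^N \dot\phi_i^2 - \sum_{i=1}^N(\nu_i - \bar\nu)\phi_i - \frac{\kappa}{2N}\sum_{i,j=1}^N \cos(\phi_j - \phi_i),\]
which satisfies $\dot V = -\sum_i \dot\phi_i^2 \le 0$ along trajectories. After quotienting out the diagonal symmetry (the center of mass $S = \sum_i \phi_i$ satisfies $m\ddot S + \dot S = 0$ and is exponentially damped), I would attempt to establish a priori confinement of the phase differences $\phi_j - \phi_i$ for a.e. initial datum, from which the dissipation identity yields $\int_0^\infty\sum_i \dot\phi_i^2\,dt < \infty$. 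LaSalle's invariance principle then forces every $\omega$-limit set into the equilibrium set, and under the generic Morse hypothesis on the potential $U(\Phi) = -\sum_i(\nu_i-\bar\nu)\phi_i - \frac{\kappa}{2N}\sum_{i,j}\cos(\phi_j-\phi_i)$, the inertial linearization at each equilibrium has eigenvalues satisfying $m\lambda^2 + \lambda + \mu = 0$ for $\mu$ an eigenvalue of $\mathrm{Hess}\,U$: Morse minima produce sinks, whereas Morse saddles produce hyperbolic inertial saddles with stable manifolds of positive codimension. A finite union of such submanifolds is Lebesgue-negligible, yielding the full-measure basin of phase-locked attractors.

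The principal analytical obstacle is the a priori confinement of phase differences for almost every initial datum, uniformly in $m$. For small $m$ this can be bootstrapped from the Tikhonov perturbation theorem developed in the present paper, which reduces the dynamics to the gradient-flow behavior of \eqref{A-2}. For large $m$, however, the ball-on-washboard picture suggests that inertia may carry trajectories over saddles and out of wells of the tilted potential $U$, so that the numerically observed first-order hysteretic transition \cite{T-L-O1,T-L-O2,Olmi14,Barre16} could manifest as a coexistence of phase-locked equilibria with non-equilibrium attractors such as rotating waves or limit cycles. Showing that the basins of such non-equilibrium attractors are Lebesgue-negligible whenever $\kappa\ge\kappa_c$ --- equivalently, controlling the inertial dissipation uniformly in $m$ --- appears to be the essential difficulty and the reason the conjecture remains open.
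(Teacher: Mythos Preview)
The statement is explicitly labeled a \emph{conjecture} in the paper and is not proved there; it is presented as an open problem motivated by numerical simulations, and in Section~\ref{sec:conclusion} the authors themselves identify the construction of a suitable Lyapunov functional (Conjecture~\ref{conj:lyapunov}) as the missing ingredient for an inertia-independent resolution. There is thus no proof in the paper against which to compare your proposal.

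On its own merits, your outline correctly reduces Part~(2) to the definition of $\kappa_c$ and correctly identifies the mechanical-energy functional for Part~(1). The genuine gap, which you name but do not close, is that $V$ is \emph{not} bounded below on the non-compact configuration space when the $\nu_i$ are not all equal, because of the linear tilt $-\sum_i(\nu_i-\bar\nu)\phi_i$. The dissipation identity $\dot V=-\sum_i\dot\phi_i^2$ therefore does not by itself yield $\int_0^\infty\sum_i\dot\phi_i^2\,dt<\infty$, and LaSalle cannot be invoked without first establishing the very confinement you say you would ``attempt'' to prove. This is not a technicality: for large $m$ the tilted-washboard dynamics you invoke admits running solutions with $\dot\phi_i\not\to 0$, and the hysteresis literature \cite{T-L-O1,T-L-O2,Olmi14,Barre16} indicates that such solutions can coexist with phase-locked equilibria and attract open sets of initial data. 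Excluding this possibility for \emph{every} $m>0$ once $\kappa\ge\kappa_c$ is precisely the content of the conjecture, so your plan presupposes what is to be shown. A secondary issue is that the Morse hypothesis on $U$ is an additional genericity assumption on $\mathcal V$ not present in the conjecture as stated.
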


Less ambitiously, we may ask the following weakened question.
\begin{question}\label{ques:suff-coupling}
	\emph{(Existence of sufficient coupling strength)}~ For a fixed natural frequency vector ${\mathcal V}=(\nu_1,\ldots,\nu_N)$ and Lebesgue a.e.~initial data $(\Theta^0,\Omega^0)$, does there exist a finite value $\kappa_c = \kappa_c({\mathcal V}, \Theta^0, \Omega^0)$ such that for $\kappa>\kappa_c$,  system \eqref{A-1} exhibits
	asymptotic phase-locking, regardless of the inertia $m>0$?
\end{question}

Our previous work \cite{C-D-H-R25} affirmatively answered this question for small inertia $m>0$, by employing a second-order analogue of the techniques from \cite{H-R} and analyzing Lyapunov functionals. We provided sufficient conditions for asymptotic phase-locking under \emph{generic} initial conditions. The precise statement is as follows, where the \emph{order parameter} of models \eqref{A-1} and \eqref{A-2} is defined as follows:
\[
R(t) \coloneqq \left|\frac{1}{N} \sum_{j=1}^N e^{\mathrm{i}\theta_j(t)} \right|, \quad R^0\coloneqq R(0).
\]

\begin{theorem}[{\cite[Theorem 1.1]{C-D-H-R25}}]
	There is a small universal constant $c>0$ such that
	if the initial configuration and system parameters satisfy
	\begin{align}
		\begin{aligned} \label{eq:R^0}
			&  \max \Bigg \{ \frac{\displaystyle\max_{i,j\in[N]}|\nu_i-\nu_j|}{\kappa},~~\frac{\displaystyle\max_{i,j\in[N]}|\omega_i^0-\omega_j^0|}{\kappa},~~m\kappa \Bigg \} <c (R^0)^2,
		\end{aligned}
	\end{align}
	then the solution to \eqref{A-1} converges to a single traveling wave solution: there exist $\theta_1^\infty,\cdots,\theta_N^\infty\in \mathbb{R}$ such that
	\[
	\lim_{t\to\infty } \left(\theta_i(t) - \frac{1}{N}\sum_{i=1}^N\nu_i t\right) = \theta_i^\infty,
	\quad
	\lim_{t\to\infty} \dot{\theta}_i(t) = \frac{1}{N}\sum_{i=1}^N\nu_i ,\qquad i=1,\cdots,N.
	\]
\end{theorem}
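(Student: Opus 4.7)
The strategy I would follow is the Lyapunov-functional bootstrap approach developed in \cite{C-D-H-R25}, which treats the inertial system \eqref{A-1} as an energy-dissipative perturbation of the first-order gradient flow \eqref{A-2}.

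\emph{Step 1 (Co-rotating frame).} Set $\bar{\nu} = \frac{1}{N}\sum_i \nu_i$ and let $\tilde{\theta}_i(t) = \theta_i(t) - \bar\nu t$. The new phases satisfy \eqref{A-1} with natural frequencies $\nu_i - \bar\nu$ (which sum to zero), initial velocities $\omega_i^0 - \bar\nu$, and the same order parameter $R(t)$. The theorem reduces to showing $\tilde\theta_i(t)\to\theta_i^\infty$ and $\dot{\tilde\theta}_i(t)\to 0$.

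\emph{Step 2 (Energy dissipation).} Consider the mechanical energy
\[
E(t) = \frac{m}{2}\sum_{i=1}^N \dot{\tilde{\theta}}_i(t)^2 - \sum_{i=1}^N (\nu_i - \bar\nu)\,\tilde{\theta}_i(t) - \frac{\kappa}{2N}\sum_{i,j=1}^N \cos(\tilde{\theta}_j(t)-\tilde{\theta}_i(t)).
\]
Multiplying \eqref{A-1} by $\dot{\tilde\theta}_i$ and summing yields $\dot E(t) = -\sum_i \dot{\tilde\theta}_i(t)^2 \le 0$. Provided the phases remain concentrated (so the linear term stays bounded), this furnishes an a priori bound $\int_0^\infty \|\dot{\tilde\Theta}(s)\|^2\,ds < \infty$.

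\emph{Step 3 (Order-parameter bootstrap — the main obstacle).} The crux is to propagate the lower bound $R(t) \ge R^0/2$ for all $t \ge 0$. I would differentiate $R^2 = \frac{1}{N^2}\sum_{i,j}\cos(\tilde\theta_j-\tilde\theta_i)$ twice and substitute \eqref{A-1} to obtain a second-order differential inequality of the schematic form
\[
m\ddot R + \dot R \;\ge\; \kappa\,R\,g(\tilde\Theta) \;-\; \mathcal{E}(t),
\]
where $g$ is a positive geometric factor in the concentrated regime and $\mathcal{E}(t)$ collects errors controlled by $\max_{i,j}|\nu_i-\nu_j|$, $\max_{i,j}|\omega_i^0-\omega_j^0|$, and $m\kappa(\max_i|\dot{\tilde\theta}_i|)^2$. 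A continuity/contradiction argument then shows that if $R$ first dips to $R^0/2$ at some time $T$, the smallness of $c$ in \eqref{eq:R^0} relative to $(R^0)^2$ forbids this. The principal difficulty is controlling inertial overshoot, i.e., moments when $\dot R<0$ despite the restoring force; this is precisely what the assumption $m\kappa < c(R^0)^2$ is tailored to handle.

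\emph{Step 4 (Convergence to a single phase-locked state).} Once the lower bound on $R$ is secured, the trajectory $(\tilde\Theta(t),\dot{\tilde\Theta}(t))$ stays in a compact region where the right-hand side of \eqref{A-1} is real analytic. Combining the $L^2$-in-time velocity bound from Step 2 with the uniform bound on $\ddot{\tilde\theta}_i$ obtained from \eqref{A-1} gives $\dot{\tilde\theta}_i(t)\to 0$. For the phases themselves, I would apply a {\L}ojasiewicz-type gradient inequality for the analytic potential (as in \cite{H-R}) to upgrade subsequential convergence to convergence $\tilde\theta_i(t)\to\theta_i^\infty$. Undoing the co-rotation yields the traveling-wave statement with common asymptotic frequency $\bar\nu$.
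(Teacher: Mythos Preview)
This theorem is quoted from \cite{C-D-H-R25} and is not proved in the present paper, so there is no proof here to compare against directly. What the paper does record about the strategy of \cite{C-D-H-R25} is that its central device is the approximation Lemma~\ref{L:approxaut}, which expresses $\dot\theta_i(t)$ in terms of $\Theta(t)$ and $\Omega^0$ up to an error of size $O(m\kappa)$ plus a transient $e^{-t/m}$. After an initial time layer $[0,\eta m]$ during which the velocities relax, this reduces the analysis to a perturbed first-order system and enables an order-parameter ``condensation'' step adapted from \cite{H-R}; the conclusion is then obtained via the partial phase-locking Theorem~\ref{L4.4} (with $\mathcal{B}=[N]$), not via a \L ojasiewicz inequality.

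Your outline shares the order-parameter bootstrap philosophy but substitutes different machinery at two points: you use the mechanical energy of Step~2 instead of Lemma~\ref{L:approxaut} to control velocities, and you invoke a second-order \L ojasiewicz argument in Step~4 instead of Theorem~\ref{L4.4}. Both substitutions are defensible in principle (for the latter, the relevant theory is \cite{L-X,L-X-Y}, not \cite{H-R}). The genuine gap is your Step~3, which you rightly flag as the crux but leave schematic. The inertial error you write as $m\kappa(\max_i|\dot{\tilde\theta}_i|)^2$ is not controlled by $R$ or by \eqref{eq:R^0} until \emph{after} the velocities have relaxed, and your Step~2 energy bound is circular here: the lower bound on $E(t)$ needed to conclude $\int_0^\infty\|\dot{\tilde\Theta}\|^2<\infty$ fails unless the linear term $-\sum_i(\nu_i-\bar\nu)\tilde\theta_i$ stays bounded, which is precisely the phase concentration you are trying to establish. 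The proof in \cite{C-D-H-R25} (as described here) breaks this circularity by using Lemma~\ref{L:approxaut} to make the velocity control explicit in terms of $\Theta(t)$ alone after the initial layer, which is why the present paper calls that lemma ``a key tool.'' Without an analogous device, your bootstrap does not visibly close.
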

The condition $R^0 > 0$ holds for almost every initial phase configuration $\Theta^0\in\mathbb{R}^N$ with respect to the Lebesgue measure. This is why the condition is said to be {\it generic}.

There, one philosophy was to consider the inertial Kuramoto model \eqref{A-1} as a perturbation of the original Kuramoto model \eqref{A-2} when $m$ is small, and apply the known techniques for proving asymptotic phase-locking for \eqref{A-2}. A natural question is 
\begin{quote}
	\begin{center}
		``In what sense is \eqref{A-1} a perturbation of \eqref{A-2}?" 
	\end{center}
\end{quote}
The goal of this paper is to consider two different perspectives on this matter and to discuss the optimality of such an approximation. The results of our investigation, as well as a by-product synchronization theorem, are described in the next subsection.

\subsection{Showcase of the new results}
In this subsection, we outline the new results obtained in this paper. We first begin by stating the main new result of this paper, namely a synchronization statement given as Theorem \ref{thm:qualitative_mto0} below. It can be considered a variant of \cite[Theorem 1.1]{C-D-H-R25} with better control on the limiting order parameter. 
\begin{theorem}\label{thm:qualitative_mto0}
	Let the initial position data $\theta_1^0,\cdots,\theta_N^0\in \mathbb{R}$ be such that $R^0>0$, and let $\varepsilon>0$. Then, there exist sufficiently small numbers $a,b,c>0$ depending only on $\theta_1^0,\cdots,\theta_N^0$ and $\varepsilon$ such that if the initial velocity data $\omega_1^0,\cdots,\omega_N^0\in \mathbb{R}$ and system parameters $\kappa,m>0$, and $\nu_1,\cdots,\nu_N\in \mathbb{R}$ satisfy
	\[
	\max_{i,j\in[N]}|\nu_i-\nu_j|/\kappa< a,\quad \max_{i,j\in[N]}|\omega_i^0-\omega_j^0|/\kappa< b,\quad m\kappa <c,
	\]
	then the solution $\theta_1(t),\cdots,\theta_N(t)$ to the Cauchy problem \eqref{A-1} achieves \emph{asymptotic phase-locking}: there exist $\theta_1^\infty,\cdots,\theta_N^\infty\in \mathbb{R}$ such that
	\[
	\lim_{t\to\infty } \left(\theta_i(t) - \frac{1}{N}\sum_{i=1}^N\nu_i t\right) = \theta_i^\infty,
	\quad
	\lim_{t\to\infty} \dot{\theta}_i(t) = \frac{1}{N}\sum_{i=1}^N\nu_i ,\qquad i=1,\cdots,N.
	\]
	Furthermore, we have the following lower bound for the limiting order parameter:
	\begin{align*}
		\lim_{t\to\infty}R(t)>
		\begin{cases}
			1-\varepsilon&\mathrm{if~}N=2,\\
			1-\frac 2N-\varepsilon&\mathrm{if~}N=3\mathrm{~or~}\eqref{eq:init-nonidentical}\mathrm{~holds},\\
			R^0-\varepsilon&\mathrm{otherwise},
		\end{cases}
	\end{align*}
	where \eqref{eq:init-nonidentical} is the following condition:
	\begin{equation}\label{eq:init-nonidentical}
		\theta_i^0\not\equiv \theta_j^0 \mod 2\pi,\quad \forall i\neq j\in [N].
	\end{equation}
\end{theorem}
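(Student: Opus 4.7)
The plan is to view \eqref{A-1} as a singular perturbation in $m$ of the first-order Kuramoto model \eqref{A-2} and transfer the phase-locking theory of the latter to the former. Stage one: apply first-order convergence theory (e.g.\ \cite{H-K-R,H-R}) to the solution $\theta^{\mathrm{1st}}(t)$ of \eqref{A-2} launched from the same initial phases $\theta_i^0$ with natural frequencies satisfying $\max_{i,j}|\nu_i-\nu_j|/\kappa<a$. For $a$ small enough depending only on $\theta_i^0$ and $\varepsilon$, this yields asymptotic phase-locking of $\theta^{\mathrm{1st}}$ with common frequency $\bar\nu=N^{-1}\sum_i\nu_i$ and a limiting order parameter $R^{\infty,\mathrm{1st}}$ exceeding the claimed $N$-dependent bounds up to $\varepsilon/4$: essentially $1$ for $N=2$, essentially $1-2/N$ under $N=3$ or \eqref{eq:init-nonidentical}, and essentially $R^0$ otherwise. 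The last bound follows from the identity $\dot R=\kappa R\cdot N^{-1}\sum_j\sin^2(\phi-\theta_j)\ge 0$ for the identical first-order flow ($\nu_i\equiv\bar\nu$), combined with a continuity argument as $a\to 0$; the first two exploit that two-cluster fixed points with $R\le 1-2/N$ have unstable directions that generic distinct initial data (possibly after small natural-frequency perturbation) cannot track.

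Stage two: invoke the quantitative Tikhonov theorem developed in this paper, which provides the following statement: for any $T,\delta>0$ there exist $b,c>0$ depending on $T$, $\delta$, and $\theta_i^0$ such that whenever $\max_{i,j}|\omega_i^0-\omega_j^0|/\kappa<b$ and $m\kappa<c$, then past an initial boundary layer of length $O(m)$---during which $\dot\theta$ relaxes exponentially onto the slow manifold prescribed by the first-order vector field---the solutions $(\theta(t),\dot\theta(t))$ of \eqref{A-1} and $(\theta^{\mathrm{1st}}(t),\dot\theta^{\mathrm{1st}}(t))$ of \eqref{A-2} remain within $\delta$ of each other on $[0,T]$. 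I would then fix $T$ large enough that $\theta^{\mathrm{1st}}$ has essentially reached phase-locking: the first-order phase diameter has collapsed, $|\dot\theta_i^{\mathrm{1st}}(T)-\bar\nu|\ll\kappa$, and $R^{\mathrm{1st}}(T)>R^{\infty,\mathrm{1st}}-\varepsilon/4$. With $T$ fixed, I would shrink $\delta$ so that at time $T$ the state $(\theta_i(T),\dot\theta_i(T))$ satisfies the hypothesis \eqref{eq:R^0} of \cite[Theorem 1.1]{C-D-H-R25} when the time origin is shifted to $T$: $R(T)>R^{\infty,\mathrm{1st}}-\varepsilon/2>0$, the velocity spread $\max_{i,j}|\dot\theta_i(T)-\dot\theta_j(T)|/\kappa$ is arbitrarily small, and $m\kappa<c$. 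Applying that theorem yields asymptotic phase-locking of the full inertial dynamics with common frequency $\bar\nu$, and since the phase diameter stays tight for $t\ge T$, the limit $\lim_{t\to\infty}R(t)$ differs from $R(T)$ by at most $\varepsilon/4$, giving $\lim_{t\to\infty}R(t)>R^{\infty,\mathrm{1st}}-\varepsilon$ as required.

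The main expected obstacle is the quantitative Tikhonov theorem itself: one must simultaneously control the initial $O(m)$ boundary layer (via a fast--slow decomposition in $\dot\theta$) and the Gronwall-type error propagation on the finite interval $[0,T]$, obtaining explicit smallness thresholds $b,c$ as functions of $T$ and $\theta_i^0$. The resulting dependence on $T$ may be exponential, but this is benign because $T\kappa$ depends only on $\theta_i^0$ and $\varepsilon$: after the nondimensionalization $t\mapsto\kappa t$ the first-order dynamics becomes $\kappa$-independent, so the time to reach phase-locking in units of $1/\kappa$ is set by $\theta_i^0$ and $\varepsilon$ alone, and the final constants $a,b,c$ in the theorem inherit that dependence. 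A subsidiary delicate point is justifying $R^{\infty,\mathrm{1st}}>1-2/N-\varepsilon/2$ under \eqref{eq:init-nonidentical}, which requires a center-stable manifold analysis of the two-cluster fixed points together with the observation that generic distinct initial data, perturbed by small natural-frequency heterogeneity, miss their measure-zero stable manifolds.
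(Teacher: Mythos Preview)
Your overall two-stage strategy---compare to the first-order model on a finite interval via the quantitative Tikhonov theorem, then invoke a second-order synchronization result once the configuration is favorable---is exactly the paper's. The paper even makes your ``continuity as $a\to 0$'' explicit: it first runs the \emph{identical} first-order flow (all $\nu_i$ equal) to its bipolar limit, identifies a majority cluster $\mathcal{A}$ with $|\mathcal{A}|\ge\lambda N$, and then uses a Gr\"onwall comparison to carry this over to the non-identical first-order flow and (via Proposition~\ref{prop:mto0quant}) to the inertial flow at a fixed normalized time $\tau_0$.

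There is one genuine gap. In Stage two you close the argument by applying \cite[Theorem~1.1]{C-D-H-R25} at time $T$, but that theorem only asserts convergence to \emph{some} phase-locked state; it gives no control on the limiting order parameter. Your sentence ``since the phase diameter stays tight for $t\ge T$'' is precisely what is not furnished by that theorem. The paper instead invokes the partial phase-locking result \cite[Theorem~4.1]{C-D-H-R25} (recorded here as Theorem~\ref{L4.4}), which both yields asymptotic phase-locking (taking $\mathcal{B}=[N]$) \emph{and} the uniform diameter bound $\sup_{t\ge t_1}\mathcal{D}(\Theta_\mathcal{A}(t))\le \ell$ on the majority cluster; the order-parameter lower bound then follows from the elementary estimate $R\ge \lambda\cos(\ell/2)-(1-\lambda)$. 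You should swap Theorem~1.1 for Theorem~4.1 here.

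A secondary point: your proposed justification of the $1-2/N$ bound under \eqref{eq:init-nonidentical} via ``center-stable manifold analysis'' is not how the paper proceeds and would be delicate, since distinctness of initial phases is not the same as genericity with respect to stable manifolds. The paper's argument is cleaner: for the identical first-order flow the cross-ratios are constants of motion, so if all $\theta_i^0$ are distinct modulo $2\pi$ the limit bipolar state can have at most one oscillator at the minority pole, forcing $|\mathcal{A}|\ge N-1$ and hence $\lambda\ge (N-1)/N$.
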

The proof of Theorem \ref{thm:qualitative_mto0} is given in Section \ref{sec:4}.

Theorem \ref{thm:qualitative_mto0} is a partial answer to the first part of the following conjecture posed in \cite{C-D-H-R25} after some numerical investigations. Here, $\operatorname{Var}(\mathcal{V})$ means the variance of the vector $\mathcal{V}=(\nu_1,\cdots,\nu_N)$ (see Section \ref{sec:notations}).
\begin{conjecture}[{\cite[Conjecture 3.2]{C-D-H-R25}}]\label{conj:R}
	\,
	\begin{enumerate}
		\item (Weak form) For any $\varepsilon>0$, there exists $\delta=\delta(\varepsilon)$ such that if $\max_{i,j\in[N]}|\nu_i-\nu_j|/\kappa<\delta$, then for generic initial data $\Theta^0$, we have
		\[
		\liminf_{t\to\infty}R(t)\ge 1-\varepsilon.
		\]
		\item (Strong form) For any $\varepsilon \in (0,\frac 12)$, there exists a sufficiently small constant $c_\varepsilon>0$ such that $\operatorname{Var}(\mathcal{V})/\kappa<c_\varepsilon$, then for generic initial data $\Theta^0$, we have
		\[
		1-\left(\frac 12+\varepsilon\right)\frac{\operatorname{Var}(\mathcal{V})}{\kappa^2} \le \liminf_{t\to\infty} R(t)\le \limsup_{t\to\infty} R(t)\le 1-\left(\frac 12-\varepsilon\right)\frac{\operatorname{Var}(\mathcal{V})}{\kappa^2}.
		\]
		If, in addition, $\max_{i,j\in[N]}|\nu_i-\nu_j|/\kappa<c_\varepsilon$, then $\{\theta_i(t)\}_{i=1}^N$ converges to the unique phase-locked state of \eqref{A-2} confined in the quarter circle \cite{C-H-J-K}.
	\end{enumerate}
\end{conjecture}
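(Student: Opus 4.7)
The plan is to treat the small-$\max_{i,j}|\nu_i-\nu_j|/\kappa$ regime as a smooth perturbation of the identical natural frequencies case $\nu_1=\cdots=\nu_N=\bar\nu$. In the co-rotating frame $\tilde\theta_i := \theta_i - \bar\nu t$, the identical-$\nu$ system becomes a dissipative second-order gradient system $m\ddot{\tilde\theta}_i + \dot{\tilde\theta}_i = -\partial_{\tilde\theta_i} V$ with the real-analytic potential $V(\tilde\theta) = -\frac{\kappa}{2N}\sum_{i,j}\cos(\tilde\theta_j - \tilde\theta_i)$. The energy $E(t) = \frac m2 \sum \dot{\tilde\theta}_i^2 + V(\tilde\theta)$ decays at rate $\sum \dot{\tilde\theta}_i^2$, so LaSalle's invariance principle forces $\dot{\tilde\theta}_i \to 0$ along subsequences, and the \L ojasiewicz inequality for real-analytic $V$ then promotes this to convergence of $\tilde\theta(t)$ to a single critical point. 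On the complete graph the only stable critical points are synchronized states $R=1$; for generic $\Theta^0$ with $R^0 > 0$, the trajectory avoids the saddle set, so the limit is indeed synchronized.

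For the weak form, I would compare a solution of \eqref{A-1} with frequency spread $\delta := \max_{i,j}|\nu_i-\nu_j|/\kappa$ to the identical-$\bar\nu$ solution using Gronwall estimates on finite time intervals. The strategy is to pick a time $T$ (depending on $\Theta^0,\Omega^0,m\kappa,\varepsilon$) large enough that the identical-$\bar\nu$ trajectory has entered a neighborhood of the synchronized manifold where $R > 1 - \varepsilon/2$, then choose $\delta=\delta(\varepsilon,T)$ small enough that the perturbed trajectory is within $\varepsilon/2$ of the unperturbed one on $[0,T]$. The remaining step is an in-neighborhood stability statement: once the perturbed trajectory enters the $R>1-\varepsilon$ neighborhood at time $T$, it never leaves. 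This can be handled by a Lyapunov functional of the form $\frac m2 \sum(\dot\theta_i-\bar\nu)^2 + \frac{\kappa}{2N}\sum(1-\cos(\theta_j-\theta_i))$ plus a small cross-term correction, as in \cite{C-D-H-R25}, exploiting that $\sin$ dominates its linearization near the synchronized manifold.

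For the strong form, I would seek a phase-locked ansatz $\theta_i(t) = \bar\nu t + \phi_i$ with $\dot\phi_i = 0$; substituting into \eqref{A-1} yields the algebraic system $\frac{\kappa}{N}\sum_j \sin(\phi_j - \phi_i) = \nu_i - \bar\nu$. Linearizing $\sin\approx\mathrm{id}$ gives $-\kappa(\phi_i - \bar\phi) = \nu_i - \bar\nu$, hence $\operatorname{Var}(\phi) = \operatorname{Var}(\mathcal V)/\kappa^2$ to leading order, and $\cos\phi\approx 1-\phi^2/2$ then gives $R \approx 1 - \frac12\operatorname{Var}(\mathcal V)/\kappa^2$, matching the claimed constants; bounding the next-order terms yields the $\pm\varepsilon$ slack. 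The hard part, and the main obstacle to the full conjecture under current technology, is removing the restriction $m\kappa < c$ that appears in Theorem \ref{thm:qualitative_mto0}: when $m\kappa$ is large the dissipation $\dot\theta_i$ is weak compared to the oscillatory forcing $\frac{\kappa}{N}\sum_j \sin(\theta_j-\theta_i)$, the natural Lyapunov functionals cease to be monotone along trajectories, and inertial oscillations can sustain themselves for arbitrarily long times before the damping catches up; establishing a uniform-in-$m\kappa$ stable neighborhood of the synchronized manifold appears to require genuinely new ideas beyond the perturbation framework developed in this paper.
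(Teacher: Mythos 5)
The statement you were asked to prove is Conjecture~\ref{conj:R}, which the paper itself presents as an \emph{open conjecture} (quoted from \cite{C-D-H-R25}), not a theorem; there is no ``paper's own proof'' to compare against. The paper offers Theorem~\ref{thm:qualitative_mto0} only as a \emph{partial} answer to the weak form, and that theorem carries the extra hypothesis $m\kappa<c$ with $c$ small --- precisely the restriction the conjecture does not impose, and precisely the obstruction you correctly flag at the end of your proposal.

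Your sketch is a reasonable roadmap in the small-inertia regime and does roughly track the strategy the paper uses for its partial result: compare on a finite window $[0,T]$ to an identical-frequency reference trajectory via Tikhonov-type and Gr\"onwall estimates (Proposition~\ref{prop:mto0quant} and the comparison \eqref{eq:id-nonid-comparison}), then switch to an in-neighborhood stability statement (the paper invokes Theorem~\ref{L4.4} for majority-cluster stability). But there are two unresolved pieces sitting exactly where your sketch hand-waves. First, the in-neighborhood Lyapunov stability step degrades as $m\kappa$ grows; no uniform-in-$m\kappa$ Lyapunov functional is currently known, which the paper records separately as Conjecture~\ref{conj:lyapunov}, so your ``never leaves the $R>1-\varepsilon$ neighborhood'' step cannot be closed for general $m$. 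Second, the genericity claim is more delicate in the second-order setting than your LaSalle/\L ojasiewicz argument acknowledges: the conjecture quantifies only over $\Theta^0$, while the saddle stable manifolds live in the full $2N$-dimensional $(\Theta,\Omega)$ phase space, so ``generic $\Theta^0$ avoids them'' requires an argument about the $\Omega^0$-fiber that is not supplied. For the strong form, your linearization around the quarter-circle phase-locked state reproduces the claimed constant $\tfrac12\operatorname{Var}(\mathcal V)/\kappa^2$, but promoting that heuristic to a genuine two-sided asymptotic on $\liminf R$ and $\limsup R$ presupposes asymptotic phase-locking and a rate of convergence, which again hits the $m\kappa$ wall. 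In short, your diagnosis of the missing idea is accurate, and it coincides with the authors' own: the conjecture remains open.
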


We now describe the two perspectives on viewing the inertial Kuramoto model \eqref{A-1} as a perturbation of the first-order Kuramoto model \eqref{A-2}. Our first perspective is a quantitative higher-order Tikhonov statement, namely that the solution to \eqref{A-1} converges to the solution to \eqref{A-2} as $m\to 0+$ in the $C[0,\infty)$ and $C^\infty(0,\infty)$ topologies (but not necessarily in the $C^1[0,\infty)$ topology). More precisely, we have the following statement.

\begin{theorem}\label{thm:tikhonov}
	Fix initial data $\theta_1^0,\cdots,\theta_N^0,\omega_1^0,\cdots,\omega_N^0$, intrinsic velocities $\nu_1,\cdots,\nu_N$, and a coupling strength $\kappa>0$. For each $m>0$, denote by $\theta_1(m,t),\cdots,\theta_N(m,t)$ the solution to the Cauchy problem \eqref{A-1}, and denote by $\theta_1(0,t),\cdots,\theta_N(0,t)$ the solution to the Cauchy problem \eqref{A-2}. Then, the following statements hold.
	\begin{enumerate}
		\item 
		For $i\in [N]$, $\theta_i(m,t)$ converges to $\theta_i(0,t)$ as $m\to 0$ in the Fr\'echet topology $C^0([0,\infty))$. Quantitatively, we have
		\[
		|\theta_i(m,t)-\theta_i(0,t)|\le m \left(2\max_{i\in[N]}|\omega_i^0-\nu_i|+\kappa\right)e^{2\kappa t},\quad i\in [N], ~t\ge 0.
		\]
		\item 
		For $i\in [N]$, $\theta_i(m,t)$ converges to $\theta_i(0,t)$ as $m\to 0$ in the Fr\'echet topology $C^\infty(0,\infty)$. Quantitatively, for $n\ge 1$, we have for $i\in [N]$ and $t\ge 0$,
		\begin{align*}
		&|\theta^{(n)}_i(m,t)-\theta^{(n)}_i(0,t)|\\
		&\le (n+1)! 2^n\left(\kappa+\max_{i\in[N]}|\omega_i^0|+\max_{i\in[N]}|\nu_i|+\frac 1m\right)^n\left(\left(1+\frac tm\right)^ne^{-t/m}(1+m\kappa e^{2\kappa t})+m\kappa e^{2\kappa t}\right)
		\end{align*}

	\end{enumerate}
\end{theorem}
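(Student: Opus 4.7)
I plan to prove Part (1) by a direct Grönwall argument and Part (2) by induction on $n$ using an iterated integration-by-parts representation of the inertial solution.

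For Part (1), define $\phi_i(t) \coloneqq \theta_i(m,t)-\theta_i(0,t)$, so that $\phi_i(0)=0$. Subtracting the two systems and integrating from $0$ to $t$ yields
\begin{equation*}
\phi_i(t)=-m\bigl(\dot\theta_i(m,t)-\omega_i^0\bigr)+\frac{\kappa}{N}\sum_{j=1}^N\int_0^t\bigl[\sin(\theta_j(m,s)-\theta_i(m,s))-\sin(\theta_j(0,s)-\theta_i(0,s))\bigr]ds.
\end{equation*}
The first term is controlled using the integral representation $\dot\theta_i(m,t)-\nu_i=(\omega_i^0-\nu_i)e^{-t/m}+\frac{1}{m}\int_0^t e^{-(t-s)/m}\cdot\frac{\kappa}{N}\sum_j\sin(\theta_j-\theta_i)\,ds$, obtained by variation of parameters applied to the linear ODE $m\dot\omega_i+\omega_i=\nu_i+\frac{\kappa}{N}\sum_j\sin(\theta_j-\theta_i)$; this gives the a priori estimate $|\dot\theta_i(m,t)-\omega_i^0|\le 2|\omega_i^0-\nu_i|+\kappa$. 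Using the Lipschitz bound $|\sin x-\sin y|\le|x-y|$, I obtain
\begin{equation*}
\max_i|\phi_i(t)|\le m\bigl(2\max_i|\omega_i^0-\nu_i|+\kappa\bigr)+2\kappa\int_0^t\max_i|\phi_i(s)|\,ds,
\end{equation*}
and Grönwall's inequality yields the claimed exponential bound.

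For Part (2), I proceed by induction on $n\ge 1$. The base case $n=1$ follows from the representation $\omega_i(m,t)=f_i^m(t)+e^{-t/m}(\omega_i^0-f_i^m(0))-\int_0^t e^{-(t-s)/m}\dot f_i^m(s)\,ds$ obtained from a single integration by parts, where $f_i^m(t)\coloneqq\nu_i+\frac{\kappa}{N}\sum_j\sin(\theta_j(m,t)-\theta_i(m,t))$, by subtracting $\omega_i(0,t)=f_i^0(t)$ and invoking Part (1). For the inductive step, I use the iterated IBP formula
\begin{equation*}
\omega_i(m,t)=\sum_{k=0}^{n-1}(-m)^k f_i^{m,(k)}(t)+e^{-t/m}\Bigl[\omega_i^0-\sum_{k=0}^{n-1}(-m)^k f_i^{m,(k)}(0)\Bigr]+(-1)^n m^{n-1}\!\int_0^t\! e^{-(t-s)/m}f_i^{m,(n)}(s)\,ds,
\end{equation*}
differentiate $n-1$ times in $t$, and subtract $\theta_i^{(n)}(0,t)=f_i^{0,(n-1)}(t)$. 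The resulting expression decomposes into (i) a regular part $f_i^{m,(n-1)}(t)-f_i^{0,(n-1)}(t)$ together with higher-order terms $(-m)^k f_i^{m,(n-1+k)}(t)$ for $k\ge 1$, (ii) a boundary-layer part from Leibniz-differentiating $e^{-t/m}[\,\cdots]$, and (iii) a remainder from differentiating the integral. Part (i) is expanded by Faà di Bruno into products of lower-order differences $\theta_j^{(\ell)}(m,t)-\theta_j^{(\ell)}(0,t)$, controlled by the inductive hypothesis, contributing the $m\kappa e^{2\kappa t}$ term. Part (ii) produces the $(1+t/m)^n e^{-t/m}$ factor via the Leibniz rule for differentiating $e^{-t/m}$ against a polynomial in $1/m$. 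Part (iii) requires an a priori bound on $\|f_i^{m,(n)}\|_\infty$ derived from the recursion $\omega_i^{(k+1)}=(f_i^{m,(k)}-\omega_i^{(k)})/m$.

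The principal obstacle is the bookkeeping of combinatorial and algebraic factors: the factor $(n+1)!\,2^n$ must arise cleanly from the combined Faà di Bruno and Leibniz expansions, and the base $\kappa+\max_i|\omega_i^0|+\max_i|\nu_i|+1/m$ of the polynomial bound reflects both the natural bound $|f_i^m|\le\max_i|\nu_i|+\kappa$ and the $1/m$ factors produced by the above recursion when differentiating $f_i^m$ multiple times. Ensuring these pieces combine into the stated clean form, without additional factorial or exponential losses, is the most delicate aspect of the induction.
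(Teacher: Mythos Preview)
Your Part (1) is correct and in fact cleaner than the paper's route. The paper integrates the Duhamel formula and ends up with a second-order Gr\"onwall inequality with kernel $(1-e^{-(t-s)/m})$ (their Lemma~\ref{lem:2nd-Gronwall}), which yields a sharper hyperbolic-sine bound that they then relax to the stated estimate. Your direct integration of $m\ddot\theta_i+\dot\theta_i=\ldots$ followed by the crude bound $|\dot\theta_i(m,t)-\omega_i^0|\le |\omega_i^0-\nu_i|+\kappa$ and standard Gr\"onwall is more elementary and already gives the theorem as stated.

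Part (2) has a genuine gap. Your plan hinges on an a priori bound for $\|f_i^{m,(n)}\|_\infty$ obtained ``from the recursion $\omega_i^{(k+1)}=(f_i^{m,(k)}-\omega_i^{(k)})/m$.'' Iterating that algebraic identity yields only $|\theta_i^{(k)}(m,t)|=O(m^{1-k})$ \emph{uniformly in $t$}, with no decay. Feeding such bounds into your decomposition, the ``higher-order'' regular terms $(-m)^k f_i^{m,(n-1+k)}(t)$ and the differentiated remainder integral are each of size $O(m^{1-n})$ for all $t$, and nothing forces them to be small as $m\to 0$ away from $t=0$; the scheme does not close. What is actually needed---and what the paper establishes as a separate inductive lemma (Lemma~\ref{lem:2nd-deriv-bound})---is a two-scale a priori bound
\[
|\theta_i^{(n)}(m,t)-\theta_j^{(n)}(m,t)|\lesssim (n-1)!\Bigl[(C+\tfrac1m)^n(1+\tfrac tm)^n e^{-t/m}+C^n\Bigr],
\]
so that every appearance of $1/m$ is paired with an $e^{-t/m}$ transient. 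That split does not come from the algebraic recursion; it comes from applying Duhamel to $m\theta^{(k+2)}+\theta^{(k+1)}=f^{(k)}$ at each order $k$. Once you have it, the paper's organization is also simpler than yours: rather than iterated IBP followed by $n-1$ differentiations (which forces you to control $f^{(k)}$ up to order $2n-2$), they differentiate the ODE $k$ times first and apply Duhamel once to the \emph{difference} $\theta^{(k+1)}(m,\cdot)-\theta^{(k+1)}(0,\cdot)$, so only $f^{(k)}$ and the auxiliary lemmas at order $\le k$ are needed. The Fa\`a di Bruno bookkeeping (their Lemmas~\ref{lem:FdB-probmass} and~\ref{lem:product-diff}) then produces the factorial and power constants directly.
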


See Proposition \ref{prop:mto0} for a na\"ive implementation of the classical Tikhonov theorem, and Proposition \ref{prop:mto0quant} for a detailed version of Theorem \ref{thm:tikhonov}.

Another sense in which \eqref{A-1} can be approximated to \eqref{A-2} is through a dynamic approximation scheme, as established the author's previous work \cite{C-D-H-R25}:
\begin{lemma}[{\cite[Lemma 2.3]{C-D-H-R25}}]\label{L:approxaut}
	Let $(\Theta,\Omega)$ be a global solution to \eqref{A-1}. For $t\ge 0$ and $i\in [N]$, we have
		\begin{align*}
			&\left|\dot{\theta}_i(t)-\omega_i^0 e^{-t/m}-\nu_i\left(1-e^{-t/m}\right)-\frac \kappa N\sum_{l=1}^N\sin\left(\theta_l(t)-\theta_i(t)\right)\left(1-e^{-t/m}\right)\right|\\
			& \hspace{0.5cm} \le \kappa \left(\max_{j,k\in[N]}|\omega_j^0-\omega_k^0|\right)te^{-t/m}\left(1-e^{-t/m}\right) +m\kappa (\max_{j,k\in[N]}|\nu_j-\nu_k|+2\kappa)\left(1-e^{-t/m}\right)^3.
		\end{align*}
\end{lemma}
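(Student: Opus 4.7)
The plan is to view each coordinate $\omega_i := \dot\theta_i$ as the solution of a scalar linear first-order ODE driven by the Kuramoto coupling, apply Duhamel's formula, and then estimate the deviation from the ``frozen coefficient'' approximation. Setting $f_i(t) := \frac{\kappa}{N}\sum_{j=1}^N \sin(\theta_j(t)-\theta_i(t))$, equation \eqref{A-1} reads $m\dot\omega_i+\omega_i = \nu_i + f_i(t)$, and Duhamel's formula, combined with $\frac{1}{m}\int_0^t e^{-(t-s)/m}\,ds = 1-e^{-t/m}$, yields the exact identity
\[
\omega_i(t) - \omega_i^0 e^{-t/m} - \nu_i(1-e^{-t/m}) - f_i(t)(1-e^{-t/m}) = \frac{1}{m}\int_0^t e^{-(t-s)/m}\bigl(f_i(s)-f_i(t)\bigr)\,ds.
\]

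Next, I would write $f_i(s)-f_i(t) = -\int_s^t \dot f_i(r)\,dr$ and swap the order of integration by Fubini; the inner $s$-integral evaluates to $m(e^{-(t-r)/m}-e^{-t/m})$, producing the clean representation
\[
\omega_i(t) - \omega_i^0 e^{-t/m} - \nu_i(1-e^{-t/m}) - f_i(t)(1-e^{-t/m}) = -\int_0^t \dot f_i(r)\,e^{-(t-r)/m}\bigl(1-e^{-r/m}\bigr)\,dr.
\]
Thus everything reduces to a pointwise bound on $|\dot f_i(r)|$. Differentiating $f_i$ and using $|\cos(\cdot)|\le 1$ gives $|\dot f_i(r)| \le \kappa\,D(r)$, where $D(r) := \max_{j,k}|\omega_j(r)-\omega_k(r)|$.

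To bound $D(r)$, I would rerun the same Duhamel procedure on pairwise differences $\omega_j-\omega_k$, whose forcing is $(\nu_j-\nu_k)+(f_j(r)-f_k(r))$; combined with the coarse sup-bound $|f_j-f_k|\le 2\kappa$, this yields
\[
D(r) \le M_\omega\, e^{-r/m} + (M_\nu + 2\kappa)(1-e^{-r/m}), \qquad M_\omega := \max_{j,k}|\omega_j^0 - \omega_k^0|,\quad M_\nu := \max_{j,k}|\nu_j - \nu_k|.
\]
Substituting this into the integral representation and splitting into two pieces produces the elementary integrals $e^{-t/m}\int_0^t(1-e^{-r/m})\,dr$ and $\int_0^t(1-e^{-r/m})^2 e^{-(t-r)/m}\,dr$, both of which evaluate in closed form by a substitution $u = s/m$.

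The main obstacle---really the only nontrivial step---is to massage the resulting closed-form expressions into the precise shapes $te^{-t/m}(1-e^{-t/m})$ and $m(1-e^{-t/m})^3$ appearing in the statement. These reductions boil down to two one-variable calculus lemmas, namely $(x+1)e^{-x}\le 1$ and $3-2x-4e^{-x}+e^{-2x}\le 0$ for all $x = t/m \ge 0$, each provable by verifying the value and derivative at $x=0$ together with a short monotonicity/concavity check of the second derivative. Summing the two resulting bounds yields the claimed inequality.
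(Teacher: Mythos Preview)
Your proposal is correct and follows essentially the same route the paper describes for the proof in \cite{C-D-H-R25}: apply Duhamel's formula to $\omega_i$, recognize the residual as a weighted integral of the increment $f_i(s)-f_i(t)$, control that increment by the velocity diameter $D(r)$ (which is exactly Lemma~\ref{L2.2}, obtained by a second application of Duhamel to pairwise differences), and then evaluate the resulting elementary integrals. Your two calculus lemmas are correct; for the second one note that $g(x)=3-2x-4e^{-x}+e^{-2x}$ satisfies $g(0)=0$ and $g'(x)=-2(1-e^{-x})^2\le 0$, so no second-derivative check is needed.
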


\noindent Since obtaining a sharp lower bound on the derivative of the order parameter is essential\footnote{Particularly for the initial layer and condensation steps in the previous result \cite{C-D-H-R25}} one may naturally ask about the optimality of the above estimate. This raises a question \cite[Question 2.1]{C-D-H-R25}: is there a better way to approximate $\dot\theta_i(t)$ with $\theta_i(t)$ and $\omega_i^0$? Is it even possible to \emph{determine} $\dot\theta_i(t)$ from $\theta_i(t)$ and $\omega_i^0$? To this, we give the following answer.

\begin{theorem}\label{thm:determinability}
	Let $\kappa,m,t^*>0$. Then, in system \eqref{A-1}, $\theta_1(t^*),\cdots,\theta_N(t^*),\omega_1^0,\cdots,\omega_N^0$ uniquely determine $\dot\theta_1(t^*),\cdots,\dot\theta_N(t^*)$ if and only if
	\[ \mbox{either} \quad m\kappa \le \frac 14; \quad \mbox{or} \quad m\kappa >\frac 14 \quad \mbox{and} \quad t^*\le \frac{\pi m}{\sqrt{4m\kappa -1}}+\frac{2 m}{\sqrt{4m\kappa-1}}\sin^{-1}\left(\frac{1}{\sqrt{4m\kappa}}\right). \]
	Furthermore, if $t_0<\max\left\{\frac{1}{2\kappa},\sqrt{\frac{m}{\kappa}}\right\}$, there exists a Banach contraction algorithm that determines $\dot\theta_1(t^*),\cdots,\dot\theta_N(t^*)$ from $\theta_1(t^*),\cdots,\theta_N(t^*),\omega_1^0,\cdots,\omega_N^0$.
\end{theorem}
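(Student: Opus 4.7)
My plan is to recast the question as a global injectivity problem. Note that $\theta_i(t^*)$ and $\omega_i^0$ determine $\dot\theta_i(t^*)$ if and only if the flow map $\Theta^0 \mapsto \Theta(t^*;\Theta^0,\Omega^0)$ is injective for each fixed $\Omega^0$: two trajectories sharing $\Omega^0$, $\Theta(t^*)$, and $\dot\Theta(t^*)$ would coincide on $[0,t^*]$ by backward Cauchy--Lipschitz uniqueness, forcing $\Theta^0$ to match as well. Setting $\delta_i = \theta_i^{(1)}-\theta_i^{(2)}$ for two candidate solutions and expanding $\sin A-\sin B = (A-B)\int_0^1\cos(\lambda A+(1-\lambda)B)\,d\lambda$, I obtain the linear boundary-value problem
\[
m\ddot\delta_i+\dot\delta_i = \frac{\kappa}{N}\sum_{j=1}^Nc_{ij}(t)(\delta_j-\delta_i),\qquad \dot\delta_i(0)=0,\ \ \delta_i(t^*)=0,
\]
with $|c_{ij}(t)|\le 1$. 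The mean $\bar\delta$ satisfies $m\ddot{\bar\delta}+\dot{\bar\delta}=0$ with matching trivial boundary data, forcing $\bar\delta\equiv 0$; on the mean-zero subspace the operator $(L(t)\delta)_i:=\frac{1}{N}\sum_jc_{ij}(\delta_i-\delta_j)$ is symmetric and has spectral norm at most $1$, since $\langle\delta,L\delta\rangle = \frac{1}{2N}\sum_{i,j}c_{ij}(\delta_i-\delta_j)^2$ is bounded by $\|\delta\|^2$.

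Next I identify the sharp linear threshold from the worst-case scalar model $m\ddot y+\dot y+\kappa y=0$ with $\dot y(0)=0$, corresponding to $L$ having eigenvalue $1$. For $4m\kappa>1$, this solution is $y(t)=y_0 e^{-t/(2m)}\bigl[\cos(\beta t)+(2m\beta)^{-1}\sin(\beta t)\bigr]$ with $\beta = \sqrt{4m\kappa-1}/(2m)$, whose first positive zero satisfies $\tan(\beta t_1)=-\sqrt{4m\kappa-1}$, giving $\beta t_1 = \pi-\arctan\sqrt{4m\kappa-1}$. The identity $\arctan(\sqrt{x^{-2}-1})=\pi/2-\sin^{-1}(x)$ with $x=1/\sqrt{4m\kappa}$ rewrites this as $\beta t_1=\pi/2+\sin^{-1}(1/\sqrt{4m\kappa})$; dividing by $\beta$ recovers exactly the stated threshold. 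For $4m\kappa\le 1$ the characteristic roots are real and negative, and a direct check confirms that no solution with $\dot y(0)=0$ ever vanishes, so linear uniqueness persists for all $t^*$.

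For the ``only if'' direction I would exhibit an explicit two-oscillator counterexample. Take $N=2$, $\nu_1=\nu_2=0$, $\omega_1^0=\omega_2^0=0$; then $\phi:=\theta_1-\theta_2$ satisfies the damped pendulum $m\ddot\phi+\dot\phi+\kappa\sin\phi=0$ with $\dot\phi(0)=0$, $\phi(0)=\phi_0$. Let $\tau_1(\phi_0)$ be the first positive zero of $\phi(\cdot;\phi_0)$; linearization gives $\tau_1(\phi_0)\to t_1$ as $\phi_0\to 0^+$, while proximity to the unstable equilibrium forces $\tau_1(\phi_0)\to\infty$ as $\phi_0\to\pi^-$. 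Continuity of $\tau_1$ on $(0,\pi)$ realizes every $t^*\in(t_1,\infty)$, and the $\phi\mapsto -\phi$ symmetry produces two distinct trajectories with matching $\omega^0$ and $\theta(t^*)=0$ but opposite nonzero $\dot\theta(t^*)$; for $N\ge 3$ I append silent oscillators. The ``if'' direction, asserting that the difference BVP admits only $\delta\equiv 0$ whenever $t^*\le t_1$, is the main technical obstacle: $L(t)$ is time-varying, does not commute with its derivative, and can have negative eigenvalues, so one cannot pointwise diagonalize. I plan to exploit the spectral bound via a Sturm-type comparison on the Pr\"ufer angle of a worst-case scalar surrogate, complemented by energy control of $E(t)=\frac{m}{2}\|\dot\delta\|^2+\frac{\kappa}{2}\langle\delta,L(t)\delta\rangle$ together with the auxiliary cross-term $\langle\delta,\dot\delta\rangle$, aiming at an integral inequality that forces $\delta\equiv 0$ whenever $t_1$ is not exceeded.

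Finally, for the Banach contraction assertion at $t^*<\max\{(2\kappa)^{-1},\sqrt{m/\kappa}\}$, I would parametrize the BVP by the unknown $\Theta^0$: given a guess $\Theta^{0,(k)}$, forward-solve \eqref{A-1} to produce $\Theta^{(k)}$, then update
\[
\theta_i^{0,(k+1)} = \theta_i(t^*)-\omega_i^0 m(1-e^{-t^*/m}) - \int_0^{t^*}F_i(s;\Theta^{(k)})(1-e^{-(t^*-s)/m})\,ds,
\]
where $F_i(s;\Theta)=\nu_i+\frac{\kappa}{N}\sum_j\sin(\theta_j(s)-\theta_i(s))$. Standard Gronwall-based sensitivity estimates show that this iteration is Lipschitz in $\Theta^0$ with constant bounded by $\kappa$ times the $L^1$-norm of the kernel $1-e^{-(t^*-s)/m}$ on $[0,t^*]$: the crude bound $1-e^{-r/m}\le 1$ produces a constant of order $\kappa t^*$, giving the regime $t^*<1/(2\kappa)$, while $1-e^{-r/m}\le r/m$ produces a constant of order $\kappa(t^*)^2/m$, giving $t^*<\sqrt{m/\kappa}$ in the oscillatory regime; careful bookkeeping recovers the stated constants. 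Banach's fixed-point theorem then yields a convergent iteration that recovers $\Theta^0$, hence $\dot\Theta(t^*)$, from $(\Theta(t^*),\Omega^0)$.
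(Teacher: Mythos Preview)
Your setup for the ``if'' direction is correct and is in fact the same as the paper's, but you have overlooked the clean finish and instead propose machinery that is both unnecessary and likely to lose the sharp constant. Your own spectral bound $\langle\delta,L(t)\delta\rangle\le\|\delta\|^2$ is all that is needed: set $r(t)=\|\delta(t)\|_2$ on the mean-zero subspace (equivalently, the paper's $\mathcal L=\|(\Theta-\Phi)\wedge\mathbf 1_{[N]}\|_2$). Differentiating twice gives $r\ddot r=\|\dot\delta\|^2-\dot r^2+\langle\delta,\ddot\delta\rangle$, and Cauchy--Schwarz yields $\|\dot\delta\|^2\ge\dot r^2$. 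Combined with the ODE and your bound on $L$, this gives the \emph{scalar} inequality $m\ddot r+\dot r+\kappa r>0$ whenever $r>0$; strictness holds because equality in $\langle\delta,L\delta\rangle=\|\delta\|^2$ would force $c_{ij}=1$ for some pair with $\delta_i\ne\delta_j$, whereas $c_{ij}=1$ forces $\delta_i=\delta_j$. Now your scalar Sturm computation (which is exactly the paper's Lemma~5.1) applies verbatim: if $r(0)>0$ and $\dot r(0)=0$, then $r(t^*)>0$ for all $t^*\le T^*$, contradicting $\delta(t^*)=0$. Your proposed energy $E(t)$ picks up a $\langle\delta,\dot L\delta\rangle$ term with no sign control, and working with $\|\delta\|^2$ instead of $\|\delta\|$ produces $m\ddot u+\dot u+2\kappa u\ge 0$, which gives only the suboptimal threshold $T^*(m,1,2\kappa)$.

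Your ``only if'' construction via the damped pendulum is essentially the paper's. For the Banach contraction, however, your iteration differs from the paper's and the Lipschitz claim has a gap. Because you forward-solve \eqref{A-1} from $\Theta^{0,(k)}$, the sensitivity $\|\Theta^{(k)}(s)-\Theta^{(k-1)}(s)\|$ carries a Gr\"onwall factor $e^{2\kappa s}$, so the honest Lipschitz constant of your map is $2\kappa\int_0^{t^*}e^{2\kappa s}(1-e^{-(t^*-s)/m})\,ds$, not merely $\kappa$ times the $L^1$-norm of the kernel; this shrinks the $1/(2\kappa)$ window to $\ln 2/(2\kappa)$. The paper avoids this by iterating not on $\Theta^0\in\mathbb R^N$ but on the velocity trajectory $\Omega^*\in (C[0,t_0])^N$: given $\Omega^*$, reconstruct positions by $\Theta(s)=\Theta(t^*)-\int_s^{t^*}\Omega^*$, then update $\Omega^*$ via the Duhamel formula. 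No ODE is solved at any step, and the Lipschitz bound $\min\{2\kappa t_0,\kappa t_0^2/m\}$ drops out directly, yielding the stated threshold $\max\{1/(2\kappa),\sqrt{m/\kappa}\}$.
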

See Proposition \ref{prop:determinability} for the exact determinability statement and Proposition \ref{prop:banach-contraction} for the Banach contraction algorithm.

The rest of this paper is organized as follows. In Section \ref{sec:galilean}, we review basic properties of inertial Kuramoto oscillators obtained in the previous work \cite{C-D-H-R25}, which will be crucially used in the main proof. In Section \ref{sec:mto0}, we prove our quantitative higher order Tikhonov theorem, namely Theorem \ref{thm:tikhonov}. In Section \ref{sec:4}, we provide a proof of the main theorem, namely Theorem \ref{thm:qualitative_mto0}. 
In Section \ref{sec:bound}, we answer the question of determinability of $\dot\theta_i(t)$ from $\theta_i(t)$ and $\omega_i^0$, namely we prove Theorem \ref{thm:determinability}.
Finally, Section \ref{sec:conclusion} is devoted to a brief summary of our main results and their limitations, and possible directions for future investigations. We leave long and technical proofs in the appendix. In particular, Appendix \ref{app:mto0} is devoted to the proof of Proposition \ref{prop:mto0quant}.

\subsection{Gallery of notations and conventions}\label{sec:notations}
Let $(\theta_1,\ldots,\theta_N)$ be a solution to \eqref{A-1} or \eqref{A-2}. We denote $\omega_i(t)\coloneqq \dot\theta_i(t)$. We use capital Greek letters to denote the collection of the corresponding lower Greek letters:
\begin{equation*}
\Theta\coloneqq  (\theta_1, \ldots, \theta_N), \quad \Omega\coloneqq (\omega_1, \ldots, \omega_N), \quad \mathcal{V}\coloneqq (\nu_1, \ldots, \nu_N).
\end{equation*}
We will use $\omega_i$ and $\dot{\theta}_i$ interchangeably and $\Omega$ and $\dot{\Theta}$ interchangeably throughout this article.
We denote ${\bf 1}_{[N]} = (1, \cdots, 1) \in {\mathbb R}^N$. For vectors $X=(x_i)^N_{i=1}\in \mathbb{R}^N$ and $Y=(y_i)^N_{i=1}\in \mathbb{R}^N$, we denote their exterior product $X\wedge Y\in \bigwedge^2 \mathbb{R}^N$. For the standard basis $e_1,\cdots,e_N$ of $\mathbb{R}^N$, we consider $e_i\wedge e_j$, $i,j\in [N]$, $i<j$ to be the standard basis of $\bigwedge^2\mathbb{R}^N$. Thus, in these coordinates, we may write
\[
X\wedge Y= (x_iy_j-x_jy_i)_{i,j\in [N],i<j}.
\]
As usual, we use $\|\cdot\|_p$ to denote the $\ell_p$-norm in $\bbr^{N}$:
\begin{equation*}
\|\Theta\|_{p} \coloneqq \left( \sum_{i=1}^{N} |\theta_i|^p \right)^{\frac{1}{p}}, ~ 1 \leq p < \infty, \quad  \|\Theta\|_{\infty} \coloneqq \max_{i \in[N]} |\theta_i|.
\end{equation*}
We use $\mathcal{D}$ to denote the diameter: for a vector $X= ( x_i )^N_{i=1}\in \mathbb{R}^N$,
\[
\mathcal{D}(X)\coloneqq \max_{i,j\in[N]}|x_i-x_j|.
\]
For example, for the above configurations $\Theta, \Omega$ and ${\mathcal V}$, we denote
\begin{align*}
\begin{aligned}
{\mathcal D}(\Theta) \coloneqq \max_{i,j\in[N]}|\theta_i -\theta_j|,\quad {\mathcal D}(\Omega) \coloneqq \max_{i,j\in[N]}|\omega_i -\omega_j|,\quad {\mathcal D}(\mathcal{V}) \coloneqq \max_{i,j\in[N]}|\nu_i - \nu_j|.
\end{aligned}
\end{align*}
For $X\in \mathbb{R}^N$, we have
\[
\mathcal{D}(X)=\|X\wedge {\bf 1}_{[N]}\|_\infty,
\]
where $\|\cdot\|_\infty$ denotes the $\ell_\infty$ norm endowed on $\bigwedge^2\mathbb{R}^N$ with respect to its standard basis. We observe the triangle inequality
\begin{equation}\label{eq:triangle-ineq}
|\mathcal{D}(A)-\mathcal{D}(B)|\le \mathcal{D}(A-B),\quad A,B\in \mathbb{R}^N.
\end{equation}
We also have the inequality
\[
\frac 12\mathcal{D}(X)\le \|X\|_\infty,\quad X\in \mathbb{R}^N,
\]
which follows from the identity
\begin{equation}\label{eq:diameter-max-diff}
    \|X\|_\infty-\frac 12\mathcal{D}(X)=\frac 12\left|\max_{i\in [N]} x_i+\min_{i\in [N]} x_i\right|,\quad X=(x_1,\cdots,x_N)\in \mathbb{R}^N.
\end{equation}
We also denote the variances as follows:
\begin{align*}
\begin{aligned}
\operatorname{Var}(\mathcal{V}) \coloneqq \frac 1N\sum_{i=1}^N |\nu_i-\nu_c|^2,\quad \operatorname{Var}(\Omega^0) \coloneqq \frac 1N\sum_{i=1}^N |\omega_i^0-\omega_c|^2.
\end{aligned}
\end{align*}
Here $\nu_c$ and $\omega_c$ represent the averages of components in ${\mathcal V}$ and $\Omega$:
\[ \nu_c := \frac{1}{N} \sum_{i=1}^{N} \nu_i, \quad \omega_c := \frac{1}{N} \sum_{i=1}^{N} \omega_i. \]
It is well known that
\begin{align*}
\operatorname{Var}(X)\le \frac{\mathcal{D}(X)^2}{4},\quad X\in \mathbb{R}^N.
\end{align*}
For $\mathcal{A}\subset [N]$ and $X=(x_i)_{i=1}^N\in \mathbb{R}^N$, we define the restricted vector
\[
X_\mathcal{A}\coloneqq (x_i)_{i\in\mathcal{A}}\in \mathbb{R}^{|\mathcal{A}|}.
\]
The concepts of $\mathcal{D}(X_\mathcal{A})$ and $\|X_\mathcal{A}\|_p$ are defined in the same manner:
\[
\mathcal{D}(\Theta_\mathcal{A})\coloneqq \max_{i,j\in \mathcal{A}}|\theta_i-\theta_j|,\quad {\mathcal D}(\Omega_\mathcal{A}) \coloneqq \max_{i,j\in \mathcal{A}}|\omega_i -\omega_j|,\quad {\mathcal D}(\mathcal{V}_\mathcal{A}) \coloneqq \max_{i,j\in \mathcal{A}}|\nu_i - \nu_j|.
\]
and
\begin{equation*}
\|\Theta_\mathcal{A}\|_{p} \coloneqq \Big( \sum_{i\in\mathcal{A}} |\theta_i|^p \Big)^{\frac{1}{p}}, ~ 1 \leq p < \infty, \quad  \|\Theta_\mathcal{A}\|_{\infty} \coloneqq \max_{i\in\mathcal{A}} |\theta_i|.
\end{equation*}
Throughout the paper, we call $m$, $\kappa$, and $\mathcal{V}$ system parameters, $\Theta^0$ and $\Omega^0$ initial data, and all other external parameters free parameters. We sometimes say `{\it parameters}' to refer to any of these variables.

%
%
%
%
\section{Preliminaries}\label{sec:galilean}
\setcounter{equation}{0}
In this section, we study basic properties of the inertial Kuramoto model.

\subsection{Symmetries of the inertial Kuramoto model}\label{subsec:sym}
In this subsection, we recall several symmetries of the inertial Kuramoto model, as systematically organized in \cite{C-D-H-R25}.


\vspace{0.1cm}
\begin{enumerate}
\item  (Galilean symmetry):~ the quantities $\mathcal{D}(\mathcal{V})$, $\mathcal{D}(\Omega^0)$, $\mathcal{D}(\Theta)$ and $R(\Theta)$ are invariant under the transformation: for $\nu$, $\omega$, $\theta\in\mathbb{R}$,
\begin{align}\label{eq:galilean}
\begin{aligned}
&\tilde{\nu}_i\mapsto\nu_i-\nu,\quad \tilde{\theta}_i^0\mapsto \theta_i^0-\theta,\quad \tilde{\omega}_i^0\mapsto \omega_i^0-\omega,\\
&\tilde{\theta}_i(t)\mapsto \theta_i(t)-\theta-m\omega \left(1-e^{-t/m}\right)-\nu \left(t-m+me^{-t/m}\right),\\
&\tilde{\omega}_i(t)\mapsto \omega_i(t)-\omega e^{-t/m}-\nu \left(1-e^{-t/m}\right).
\end{aligned}
\end{align}
\vspace{0.1cm}
\item  (Dilation symmetry):~the `normalized intrinsic frequencies' $\nu_i/\kappa$, `normalized inertia' $m\kappa$, and `normalized initial velocites' $\omega_i^0/\kappa$ are invariant under the time dilation symmetry: for fixed $\alpha>0$,
\begin{align}\label{eq:dilatation}
\begin{aligned}
&\kappa\mapsto \alpha\kappa,\quad \nu_i\mapsto\alpha\nu_i,\quad m\mapsto m/\alpha,\\
&\theta_i^0\mapsto \theta_i^0, \quad \omega_i^0\mapsto \alpha \omega_i^0,\quad \theta_i(t)\mapsto \theta_i(\alpha t),\quad \omega_i(t)\mapsto \alpha\omega_i(\alpha t),
\end{aligned}
\end{align}
\vspace{0.1cm}
\item (Reflection symmetry): the quantities $\mathcal{D}(\mathcal{V})$, $\mathcal{D}(\Omega^0)$, $\mathcal{D}(\Theta)$ and $R(\Theta)$ are invariant under the following transformation:
\begin{equation}\label{eq:reflection}
\nu_i\mapsto -\nu_i,\quad \theta^0_i\mapsto -\theta^0_i,\quad \omega_i^0\mapsto -\omega_i^0,\quad \theta_i(t)\mapsto -\theta_i(t),\quad \omega_i(t)\mapsto -\omega_i(t).
\end{equation}
\item (Particle exchange symmetry): the quantities $\mathcal{D}(\mathcal{V})$, $\mathcal{D}(\Omega^0)$, $\mathcal{D}(\Theta)$ and $R(\Theta)$ are invariant under the following transformation:
\begin{equation}\label{eq:particle-exchange}
  \nu_i\mapsto \nu_{\pi^{-1}(i)},\quad \theta_i^0\mapsto \theta_{\pi^{-1}(i)}^0,\quad \omega_i^0\mapsto \omega_{\pi^{-1}(i)}^0
\end{equation}
for fixed $\pi\in S_N$ (the symmetric group on $N$ elements).
\end{enumerate}
The Galilean symmetry is accompanied by the following conservation law, where we use the subscript $c$ to denote the mean over all particles:
\begin{equation} \label{B-2}
	\theta_c^0 \coloneqq \frac{1}{N} \sum_{i=1}^{N}  \theta_i^0,\quad
	\theta_c \coloneqq \frac{1}{N}\sum_{i=1}^N\theta_i,\quad
	\omega_c^0 \coloneqq \frac{1}{N} \sum_{i=1}^{N} \omega_i^0,\quad
	\omega_c \coloneqq \frac{1}{N}\sum_{i=1}^N\omega_i,\quad
	\nu_c = \frac{1}{N}\sum_{i=1}^N \nu_i.
\end{equation}
\begin{lemma}\label{L2.1}
	Let $(\Theta, \Omega)$ be a global solution to \eqref{A-1}. The mean phase and frequency satisfy
	\begin{align}
		\begin{aligned} \label{eq:galconserve}
			\theta_c(t) &= m\omega_c^0(1-e^{-t/m}) + \nu_c \left(t-m+me^{-t/m}\right) + \theta_c^0, \\
			\omega_c(t) &= \omega_c^0 e^{-t/m} + \nu_c\left(1-e^{-t/m}\right), \quad t \geq 0.
		\end{aligned}
	\end{align}
\end{lemma}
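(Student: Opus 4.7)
The plan is to reduce the statement to a routine linear ODE computation by exploiting the antisymmetry of the coupling term. First I would sum the governing equation \eqref{A-1} over $i\in[N]$ and divide by $N$, obtaining
\[
m\ddot\theta_c(t)+\dot\theta_c(t)=\nu_c+\frac{\kappa}{N^2}\sum_{i,j=1}^N\sin(\theta_j-\theta_i).
\]
The double sum vanishes because $\sin(\theta_j-\theta_i)=-\sin(\theta_i-\theta_j)$, so pairs $(i,j)$ and $(j,i)$ cancel. This leaves the decoupled scalar equation $m\ddot\theta_c+\dot\theta_c=\nu_c$, which is exactly the inertial Kuramoto dynamics with zero coupling and a single effective natural frequency $\nu_c$.

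Next, setting $\omega_c=\dot\theta_c$, the equation becomes the linear first-order ODE
\[
m\dot\omega_c(t)+\omega_c(t)=\nu_c,\qquad \omega_c(0)=\omega_c^0,
\]
whose explicit solution is $\omega_c(t)=\omega_c^0 e^{-t/m}+\nu_c(1-e^{-t/m})$, giving the second identity in \eqref{eq:galconserve}. For the first identity, I would integrate $\omega_c$ over $[0,t]$ starting from $\theta_c(0)=\theta_c^0$, using
\[
\int_0^t e^{-s/m}\,ds=m(1-e^{-t/m}),\qquad \int_0^t(1-e^{-s/m})\,ds=t-m+me^{-t/m},
\]
to recover the stated closed form for $\theta_c(t)$.

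There is really no hard step here; the only subtlety worth flagging is checking that the interaction term collapses cleanly, which hinges only on the oddness of $\sin$ and the symmetric (mean-field, all-to-all) structure of the coupling in \eqref{A-1}. Since all computations are elementary and the ODE is globally well-posed by Cauchy--Lipschitz as noted in the introduction, no regularity or existence issues need to be addressed separately.
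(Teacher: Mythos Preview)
Your proposal is correct and follows essentially the same approach as the paper: sum \eqref{A-1} over $i$, use the antisymmetry of the sine coupling to obtain the decoupled linear ODE $m\dot\omega_c+\omega_c=\nu_c$, and integrate. The paper's proof is terser but identical in substance.
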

\begin{proof} 
	We sum both sides of \eqref{A-1} over $i$ and apply the definitions of the means given in \eqref{B-2}, yielding
	\[
	{\dot \theta}_c = \omega_c, \quad  {\dot \omega}_c = \frac{\nu_c}{m} - \frac{\omega_c}{m}, \quad t > 0.
	\]
	Integrating these ODEs, we obtain the relations \eqref{eq:galconserve}.
\end{proof}

As pointed out in \cite{C-D-H-R25}, the quantities $m\kappa$, $\mathcal{D}(\mathcal{V})/\kappa$, and $\mathcal{D}(\Omega^0)/\kappa$, that we are requiring to be small in our synchronization framework, are invariant under all four symmetries listed above. 
It is therefore ``natural'' for the assumptions and conclusions of theorems concerning the Kuramoto model \eqref{A-1} to also be invariant under these symmetries. In some cases, if a statement about the Kuramoto model \eqref{A-1} lacks symmetry, one can often find a symmetric counterpart. For instance, if the solution to \eqref{A-1} with initial data $(\Theta^0,\dot{\Theta}^0)$ and parameters $\kappa$, $m$, and $\mathcal{V}$ exhibits asymptotic phase-locking, then so does the solution to \eqref{A-1} with initial data $(\Theta^0,\dot{\tilde{\Theta}}^0)$ and parameters $\tilde{\kappa}$, $\tilde{m}$, and $\tilde{\mathcal{V}}$, provided that $\dot{\tilde{\Theta}}^0/\tilde{\kappa}=\dot{\Theta}^0/\kappa$, $\tilde{m}\tilde{\kappa}=m\kappa$, and $\tilde{\mathcal{V}}/\tilde{\kappa}=\mathcal{V}/\kappa$. 

Another example lies in the notions of ``phase-locked states'' and ``asymptotic phase-locking'' introduced in Definition \ref{D1.1} below. These notions are equivalent to ``equilibria'' and ``convergence to equilibria'' after applying the Galilean transformation \eqref{eq:galilean} with $\nu=\nu_c$ and $\omega=\omega_c$, and they remain invariant under the aforementioned symmetries (refer to Proposition 2.1 in \cite{C-D-H-R25}).
\begin{definition}\label{D1.1}
Let $(\Theta(t), {\dot \Theta}(t))$ be a global solution to the Cauchy problem \eqref{A-1} or \eqref{A-2}.
\begin{enumerate}
\item
$(\Theta(t), {\dot \Theta}(t))$ is a \emph{phase-locked state} if for all $i,j\in [N]$, $\theta_i(t)-\theta_j(t)$ is constant with respect to $t$.
\vspace{0.1cm}
\item
$(\Theta(t), {\dot \Theta}(t))$ exhibits \emph{asymptotic phase-locking} if
\[  \exists~  \lim_{t \to \infty} (\theta_i(t) - \theta_j(t) ), \quad \forall~ i, j \in [N]. \]
\item
$(\Theta(t), {\dot \Theta}(t))$ exhibits \emph{complete (frequency) synchronization} if
\[  \lim_{t \to \infty} \max_{i, j \in [N]} |{\dot \theta}_i(t) - {\dot \theta}_j(t) | = 0. \]
\item
$(\Theta(t), {\dot \Theta}(t))$ exhibits \emph{complete phase synchronization} if
\[  \lim_{t \to \infty} \left(\theta_i(t) - \theta_j(t)\right)  \in 2\pi \mathbb{Z},\quad \forall~ i,j\in [N]. \]
\end{enumerate}
\end{definition}
\begin{remark}\label{rem:pls}
Below, we provide several comments on Definition \ref{D1.1}.
\begin{enumerate}
    \item Asymptotic phase-locking implies complete frequency synchronization; one can easily see this from the Duhamel principle, later presented in \eqref{B-4}. In fact, asymptotic phase-locking is equivalent to the condition that there exist $\theta_1^\infty,\cdots,\theta_N^\infty\in \mathbb{R}$ such that
    \[
    \lim_{t\to\infty } \left(\theta_i(t) - \frac{1}{N}\sum_{i=1}^N\nu_i t\right) = \theta_i^\infty,
    \quad
    \lim_{t\to\infty} \dot{\theta}_i(t) = \frac{1}{N}\sum_{i=1}^N\nu_i ,\qquad i=1,\cdots,N.
    \]
    \vspace{0.1cm}
    \item Complete phase synchronization can happen only if $\nu_i=\nu_j$ for all $i,j\in [N]$. Again, this is due to the Duhamel principle \eqref{B-4}.
\vspace{0.1cm}
 \item Phase-locked states for \eqref{A-1} coincide with phase-locked states for \eqref{A-2}, up to the rotation of the circle, when $\omega^0_i\neq\omega_j^0$ for some $i,j$.
\end{enumerate}
    
\end{remark}
Fixing all other parameters and increasing $\kappa$ to sufficiently large values,  phase-locked states emerge beyond a certain critical threshold \cite{D-B14, V-M}.

\subsection{Duhamel’s principle} \label{sec:2.1}
In this subsection, we discuss Duhamel's principle for \eqref{A-1}. For this, we first rewrite the $N$-dimensional second-order Cauchy problem \eqref{A-1} as the $2N$-dimensional Cauchy problem
\begin{equation}\label{B-1}
\begin{cases}
\displaystyle \dot\theta_i = \omega_i, \quad t >0, \\
\displaystyle \dot\omega_i = \frac{1}{m} \Big( \nu_i - \omega_i + \frac{\kappa}{N}\sum_{j=1}^N \sin(\theta_j -\theta_i) \Big),\\
\displaystyle  (\theta_i, \omega_i) \Big|_{t = 0+} = (\theta^{0}_i, \omega^0_i),\quad i\in[N].
\end{cases}
\end{equation}
Again, system \eqref{B-1} admits a unique global solution and describes the same system as \eqref{A-1}.\footnote{To be precise, the application of the Cauchy-Lipschitz theory to \eqref{A-1} is through the system \eqref{B-1}, so the uniqueness and global existence of solutions is first established for \eqref{B-1} and then transferred to \eqref{A-1}.} One advantage of this point of view is that it allows us to apply the Duhamel principle to $\omega_i$: 
\begin{align}\label{B-4}
\begin{aligned}
\omega_i(t) &
&=  \omega^0_i e^{-t/m} + \nu_i (1 -e^{-t/m}) + \frac{\kappa}{Nm}\sum_{j=1}^N \int_0^t e^{-(t-s)/m}\sin(\theta_j(s) -\theta_i(s)) ds.
\end{aligned}
\end{align}
Since this is a weighted time-delayed version of the first-order model \eqref{A-2}, the effect of inertia can be interpreted as introducing a weighted time-delay in interactions. One immediate consequence of this formulation is the derivation of crude bounds on $\dot\Theta(t)$.
\begin{lemma}[Finite propagation speed {\cite[Lemma 2.2]{C-L}}, {\cite[Lemmas 1{~and~}4]{H-J-K}}, {\cite[Lemma 3.1]{C-D-H-R25}}]\label{L2.2}
Let $(\Theta, \Omega)$ be a global solution to \eqref{B-1}. Then, for $i, j \in [N]$ and $t\ge 0$,  one has
\begin{align*}
\begin{aligned}
& (1)~e^{-t/m}\omega_i^0 +(1-e^{-t/m})(\nu_i - \kappa) \le \omega_i(t) \le e^{-t/m}\omega_i^0 +(1-e^{-t/m})(\nu_i + \kappa). \\
& (2)~ |\omega_i(t)-\omega_j(t)| \le e^{-t/m}|\omega_i^0-\omega_j^0| +(1-e^{-t/m})(|\nu_i-\nu_j| + 2\kappa). \\
& (3)~\mathcal{D}(\Omega(t))\le e^{-t/m}{\mathcal D}(\Omega^0)  + (1-e^{-t/m})\left( {\mathcal D}({\mathcal V}) + 2 \kappa\right). \\
\end{aligned}
\end{align*}    
\end{lemma}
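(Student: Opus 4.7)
The plan is to derive all three bounds directly from the Duhamel representation \eqref{B-4}, using only the uniform estimate $|\sin|\le 1$; in particular, no detailed information about the phase dynamics is needed, which is exactly what makes the estimate uniform across all initial data and justifies the name ``finite propagation speed''.

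For part~(1), starting from \eqref{B-4} I would bound the interaction integral by pulling the uniform bound $\bigl|\tfrac{1}{N}\sum_{j=1}^N \sin(\theta_j-\theta_i)\bigr|\le 1$ outside and computing $\frac{\kappa}{m}\int_0^t e^{-(t-s)/m}\,ds = \kappa(1-e^{-t/m})$. The remaining two terms in \eqref{B-4} are already the drift terms appearing in (1), so this simultaneously produces the upper and lower bounds.

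For part~(2), I would subtract the Duhamel formulas for $\omega_i(t)$ and $\omega_j(t)$, producing the drift terms $(\omega_i^0-\omega_j^0)e^{-t/m}$ and $(\nu_i-\nu_j)(1-e^{-t/m})$ together with an interaction integral whose integrand is a difference of sines $\sin(\theta_l-\theta_i) - \sin(\theta_l-\theta_j)$, pointwise bounded by $2$. The same exponential integral computation then controls the interaction contribution by $2\kappa(1-e^{-t/m})$, and the triangle inequality gives (2). Part~(3) follows by taking the maximum of (2) over $i,j\in[N]$ and using $\max_{i,j}(A_{ij}+B_{ij})\le\max_{i,j}A_{ij}+\max_{i,j}B_{ij}$ to separate the two pieces on the right.

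There is no real obstacle here: the entire argument is a direct application of Duhamel's principle together with $|\sin|\le 1$, and the crudeness of the sine estimate is precisely what makes the bound robust. The resulting estimate is consistent with the interpretation that the velocity diameter is dominated, at all times, by a convex combination of its initial value $\mathcal{D}(\Omega^0)$ and the asymptotic ceiling $\mathcal{D}(\mathcal{V})+2\kappa$, with the interpolation weights dictated by the exponential relaxation scale $m$.
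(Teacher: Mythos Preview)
Your proposal is correct and matches the paper's approach: the paper does not spell out a proof but states the lemma as ``an immediate consequence'' of the Duhamel formulation \eqref{B-4}, citing earlier works for the details. Your argument via \eqref{B-4} together with $|\sin|\le 1$ is exactly the intended derivation.
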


\noindent In Lemma \ref{L:approxaut}, when the authors derived bounds on the first-order derivatives $\omega_i(t)$ in terms of $\dot{\Theta}(0)$, $\Theta(t)$, and $\mathcal{V}$, the Duhamel principle was applied using Lemma \ref{L2.2} once again.

%
%
%
%
\section{Quantitative higher-order Tikhonov perturbation}\label{sec:mto0}
\setcounter{equation}{0}
In this section, we study quantitative estimates for the zero inertia limit of the inertial Kuramoto model. 

In what follows, for initial data $\left(\Theta^0,\Omega^0\right)$, intrinsic frequencies  $\mathcal{V}$, a coupling strength $\kappa$ and  $m>0$, we denote by $\Theta(m,t)$, $t\ge 0$, the solution to the Cauchy problem \eqref{A-1}, and denote by $\Theta(0,t)$, $t\ge 0$, the solution to the Cauchy problem \eqref{A-2}.

We may first ask whether $\Theta(m,t)\to \Theta(0,t)$ as $m\to 0$. This is true on finite time intervals away from zero: it is an immediate consequence of the Tikhonov theorem \cite{T52,V63} that this convergence holds in the $C^0[0,\infty)$ and $C^1(0,\infty)$ Fr\'echet topologies (see Proposition \ref{prop:mto0}). By tedious computation, we will obtain a quantitative and higher-order version of Tikhonov's theorem, namely a $C[0,\infty)\cap C^\infty(0,\infty)$ convergence statement with explicit bounds (see Proposition \ref{prop:mto0quant}). By a comparison argument, we will obtain a qualitative version of the framework \eqref{eq:R^0} without any explicit bounds on the parameters (see Theorem \ref{thm:qualitative_mto0}).
\begin{proposition}[Tikhonov's theorem]\label{prop:mto0}
Let $(\Theta(m,t), \Omega(m,t))$ and $(\Theta(0,t), \Omega(0,t))$ be solutions to the Cauchy problems \eqref{A-1} and \eqref{A-2}, respectively with fixed initial conditions independent of $m$:
\[
(\Theta(m,0),\Omega(m,0))=(\Theta^0,\Omega^0),\quad \Theta(0,0)=\Theta^0.
\]
Then, for all $i\in [N]$, $\theta_i(m,t)\to\theta_i(0,t)$ as $m\to 0$ in the Fr\'echet topologies $C^0[0,\infty)$ and $C^1(0,\infty)$:
\begin{align*}
\begin{aligned}
& (i)~\lim_{m\to 0}\sup_{t\in [0,T]}|\theta_i(m,t)-\theta_i(0,t)|=0,\quad\forall~ T>0, \\
& (ii)~\lim_{m\to 0}\sup_{t\in [T_1,T_2]}\left|\omega_i(m,t)-\omega_i(0,t)\right|=0,\quad\forall~ T_2>T_1>0.
\end{aligned}
\end{align*}
\end{proposition}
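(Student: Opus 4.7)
The plan is to recognize Proposition~\ref{prop:mto0} as an instance of the classical Tikhonov theorem for singular perturbations applied to the slow--fast formulation \eqref{B-1}. Rewriting \eqref{B-1} as $\dot\Theta = \Omega$ and $m\dot\Omega = \Phi(\Theta) - \Omega$ with $\Phi_i(\Theta) \coloneqq \nu_i + (\kappa/N)\sum_{j=1}^N \sin(\theta_j - \theta_i)$, one observes that the algebraic constraint $\Omega = \Phi(\Theta)$ admits a unique (real-analytic) solution, and that substituting it back into $\dot\Theta = \Omega$ gives precisely \eqref{A-2}. Since the boundary-layer system $d\Omega/d\tau = \Phi(\Theta) - \Omega$ with $\Theta$ frozen has $\Omega = \Phi(\Theta)$ as a globally exponentially stable equilibrium (the $\Omega$-linearization is $-I$), all hypotheses of Tikhonov's theorem are met, so it yields (i) on $[0,T]$ and (ii) on $[T_1, T_2]$ simultaneously. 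A self-contained and quantitative proof can nevertheless be extracted directly from Duhamel's formula \eqref{B-4}, which is what I would sketch below.

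For (i), I would integrate \eqref{B-4} over $s \in [0,t]$ and apply Fubini to obtain
\[
\theta_i(m,t) = \theta_i^0 + m(\omega_i^0 - \nu_i)(1 - e^{-t/m}) + \nu_i t + \frac{\kappa}{N}\sum_{j=1}^N \int_0^t \bigl(1 - e^{-(t-s)/m}\bigr) \sin(\theta_j(m,s) - \theta_i(m,s))\,ds.
\]
Subtracting the corresponding integrated form of \eqref{A-2}, setting $E(t) \coloneqq \max_{i \in [N]} |\theta_i(m,t) - \theta_i(0,t)|$, and using $|\sin x - \sin y| \le |x-y|$ together with $1 - e^{-(t-s)/m} \le 1$, one arrives at
\[
E(t) \le m\bigl(\max_{i \in [N]}|\omega_i^0 - \nu_i| + \kappa\bigr) + 2\kappa \int_0^t E(s)\,ds,
\]
so Gronwall's inequality delivers $E(t) \le m(\max_i|\omega_i^0 - \nu_i| + \kappa) e^{2\kappa t}$, which proves (i) with an explicit rate (and recovers the bound appearing in Theorem~\ref{thm:tikhonov}(1)).

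For (ii), I would combine \eqref{B-4} with the identity $\omega_i(0,t) = \Phi_i(\Theta(0,t))$ and the elementary computation $m^{-1}\int_0^t e^{-(t-s)/m}\,ds = 1 - e^{-t/m}$ to decompose
\[
\omega_i(m,t) - \omega_i(0,t) = \bigl(\omega_i^0 - \omega_i(0,t)\bigr)e^{-t/m} + \frac{\kappa}{Nm}\sum_{j=1}^N \int_0^t e^{-(t-s)/m}\bigl[\sin(\theta_j(m,s) - \theta_i(m,s)) - \sin(\theta_j(0,t) - \theta_i(0,t))\bigr]\,ds.
\]
On $[T_1, T_2]$ the first term is $O(e^{-T_1/m})$ since $|\omega_i(0,t)| \le |\nu_i| + \kappa$ directly from \eqref{A-2}. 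For the convolution-type term I would split the integration at $s = t - \sqrt m$: on $[0, t - \sqrt m]$ the exponential kernel is bounded by $e^{-1/\sqrt m}$, while on $[t - \sqrt m, t]$ the uniform velocity bound of Lemma~\ref{L2.2} shows that $s \mapsto \sin(\theta_j(m,s) - \theta_i(m,s))$ is uniformly Lipschitz (yielding an $O(\sqrt m)$ contribution from the time shift $|s-t|\le \sqrt m$), and part~(i) controls the residual difference $\theta_k(m,s) - \theta_k(0,s)$ by $O(m)$. The main obstacle is structural and unavoidable: in general $\omega_i^0 \neq \Phi_i(\Theta^0)$, so the fast variable undergoes an $O(1)$ transient on the time scale $m$ near $t = 0$; this forces the restriction $t \ge T_1 > 0$ and requires one to quarantine the initial layer by exploiting the approximate-identity property of $m^{-1} e^{-s/m}\mathbf{1}_{[0,\infty)}(s)$ as $m\to 0$.
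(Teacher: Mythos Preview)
Your proof is correct, and your first paragraph takes essentially the same approach as the paper: rewrite \eqref{B-1} in slow--fast form $\dot\Theta=\Omega$, $m\dot\Omega=F(\Theta)-\Omega$, note that the boundary-layer equation has $-I$ as its linearization so the root $\Omega=F(\Theta)$ is globally exponentially stable, and invoke Tikhonov's theorem \cite{V63} directly. Your additional self-contained quantitative argument via the integrated Duhamel formula and Gr\"onwall is more than this proposition requires, but it is correct and in fact anticipates the paper's subsequent Proposition~\ref{prop:mto0quant}; the paper carries out a closely related (though more elaborate) computation there, while for Proposition~\ref{prop:mto0} itself it is content with the bare citation to Tikhonov.
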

\begin{proof}
We set 
\begin{align*}
F_{i}(\Theta(m,t))\coloneqq \nu_i +\frac{\kappa}{N}\sum_{j=1}^N \sin(\theta_j(m,t) - \theta_i(m,t)),\quad F=(F_1,\cdots,F_N).
\end{align*}
Then, the Cauchy problem \eqref{A-1} states that the vector $(\Theta(m,t),\Omega(m,t))$ with $2N$ entries is the unique solution to the ODE:
\[
\begin{cases}
  \displaystyle  \frac {d}{dt}\Theta(m,t)=\Omega(m,t),&\\
  \displaystyle  m\frac {d}{dt}\Omega(m,t)=-\Omega(m,t)+F(\Theta(m,t)),
\end{cases}
\]
with fixed initial conditions:
\[
(\Theta(m,0),\Omega(m,0))=(\Theta^0,\Omega^0).
\]
Likewise, the vector $(\Theta(0,t),\Omega(0,t))$ with $2N$ entries is the unique solution to the ODE
\[
\begin{cases}
\displaystyle    \frac {d}{dt}\Theta(0,t)=\Omega(0,t),&\\
\displaystyle     0=-\Omega(0,t)+F(\Theta(0,t)),
\end{cases}
\]
with initial conditions:
\[
(\Theta(0,0),\Omega(0,0))=(\Theta^0,F(\Theta^0)).
\]
Tikhonov's theorem \cite[Theorem 1.1]{V63} tells us that in this formulation, $\Theta(m,t)\to \Theta(0,t)$ as $m\to 0$ on compact subintervals $[0,T]\subset [0,\infty)$, and $\Omega(m,t)\to \Omega(0,t)$ as $m\to 0$ on compact subintervals $[T',T]\subset (0,\infty)$.
\end{proof}
\begin{remark}\,
\begin{enumerate}
\item Note that Propsition \ref{prop:mto0} does not give quantitative rates of convergence. In order to prove asymptotic phase-locking results, one must have quantitative bounds in order to control the behavior for small times.
\item
Proposition \ref{prop:mto0} cannot give uniform-in-time bounds, i.e., convergence in the uniform topology $L^\infty([0,\infty))$, because it is not true: for example, in the case $N=2$, compare the first-order equation for $\theta = \theta_1 - \theta_2$:
\[
\begin{cases}
\displaystyle \dot\theta=1-0.5\sin\theta, \quad t > 0, \\
\displaystyle \theta^0=0
\end{cases}
\quad \mbox{and} \qquad 
\begin{cases}
\displaystyle 0.5\ddot \theta+\dot\theta=1-0.5\sin\theta, \quad t > 0, \\
\displaystyle \theta^0=\dot\theta^0=0. 
\end{cases}
\]
The difference of the two solutions diverges linearly in time as $t\to\infty$. This is not surprising, since two different dynamical systems with the same initial position data usually do not agree in the long term.

\item
 Proposition \ref{prop:mto0} cannot give uniform convergence on neighborhoods of $t=0$ for the first derivative, i.e., it does not give convergence in the $C^1([0,\infty))$ topology, because it is also not true: indeed, \eqref{A-1} prescribes the arbitrary value $\dot\theta_i(0)=\omega_i$, but \eqref{A-2} mandates that $\dot\theta_i(0)=\nu_i+\sum_j\sin(\theta_j^0-\theta_i^0)$. This suggests that any approach to synchronization that approximates the second-order model \eqref{A-1} by the first-order model \eqref{A-2} should be done in a time regime away from $t=0$.
 \end{enumerate}
 \end{remark}
 
In the following proposition, we present a quantitative version of Proposition~\ref{prop:mto0}. Specifically, we show that
\[ \omega_i(m,t)-\omega_i(0,t)= {\mathcal O}(e^{-t/m})+ {\mathcal O}(m). \]
This estimate provides the rationale behind the ``initial time layer'' of the form $[0,\eta m]$ introduced in \cite{C-D-H-R25}. In that work, analyses such as partial phase-locking and the quasi-monotonicity estimate for the order parameter were carried out after this initial time layer.
A key limitation of the classical Tikhonov theorem is its lack of quantitative bounds, which are necessary for establishing results such as Theorem~\ref{thm:qualitative_mto0} below. By working directly with the ODE systems \eqref{A-1} and \eqref{A-2}, we derive explicit estimates, as detailed in Proposition~\ref{prop:mto0quant}. Furthermore, we demonstrate convergence of higher derivatives, namely convergence in the $C^\infty(0,\infty)$ topology.

\begin{proposition}\label{prop:mto0quant}
Fix initial data $(\Theta^0,\Omega^0)$, intrinsic velocities $(\nu_i)_{i=1}^N$, and a coupling strength $\kappa$. For each $m>0$, denote by $\Theta(m,t)$, $t\ge 0$, the solution to the Cauchy problem \eqref{A-1}, and denote by $\Theta(0,t)$, $t\ge 0$, the solution to the Cauchy problem \eqref{A-2}. Then, the following statements hold. \newline
\begin{enumerate}
    \item 
(Convergence of $\Theta(m, \cdot)$):~$\Theta(m,t)$ converges to $\Theta(0,t)$ as $m\to 0$ in the Fr\'echet topology $C^0([0,\infty))$. Quantitatively, the convergence $\Theta(m,t)\to\Theta(0,t)$ is linear in $m$, with coefficients depending on $\kappa$, $\|\Omega^0-\mathcal{V}\|_\infty$, and $e^{2\kappa t}$:
\begin{align}\label{eq:C0-approx-abs-vanilla}
\begin{aligned}
& \|(\Theta(m,t)-\Theta(0,t))\|_\infty \\
&\hspace{1cm} < m\left|\frac{\max_i(\omega_i^0-\nu_i)+\min_i(\omega_i^0-\nu_i)}2\right|  +\frac 12 m \Big(\mathcal{D} (\Omega^0-\mathcal{V})+2\kappa \Big )e^{2\kappa t},\quad t\ge 0,
\end{aligned}
\end{align}
and the convergence $\Theta(m,t)\wedge {\bf 1}_{[N]}\to\Theta(0,t)\wedge {\bf 1}_{[N]}$ is linear in $m$, with coefficients depending on $\kappa$, $\mathcal{D}(\Omega^0-\mathcal{V})$, and $e^{2\kappa t}$:
\begin{align}\label{eq:C0-approx-rel-vanilla}
\begin{aligned}
\mathcal{D}(\Theta(m,t)-\Theta(0,t))
&< m \Big (\mathcal{D}(\Omega^0-\mathcal{V})+2\kappa \Big )e^{2\kappa t},\quad t\ge 0.
\end{aligned}
\end{align}
\item 
(Convergence of ${\dot \Theta}(m, \cdot)$):
$\dot\Theta(m,t)$ converges to  $\dot\Theta(0,t)$ as $m\to 0$ in the Fr\'echet topology $C^0(0,\infty)$. Quantitatively, the convergence $\dot\Theta(m,t)\to \dot\Theta(0,t)$ is bounded by a linear combination of $m$ and $e^{-t/m}$, with coefficients depending on $\kappa$, $\|\Omega^0-\mathcal{V}\|_\infty$, $\mathcal{D}(\mathcal{V})$ and $e^{2\kappa t}$:
\begin{align}\label{eq:C1-approx-abs-vanilla}
\begin{aligned}
&\|(\dot\Theta(m,t)-\dot\Theta(0,t))\|_\infty \\
&\hspace{1cm} < \Big(\|\Omega^0-\mathcal{V}\|_\infty+\kappa \Big)e^{-t/m}+ m\kappa(\mathcal{D}(\mathcal{V})+2\kappa) +m\kappa \Big (\mathcal{D}(\Omega^0-\mathcal{V})+2\kappa \Big )e^{2\kappa t},\quad t\ge 0,
\end{aligned}
\end{align}
and the convergence $\dot\Theta(m,t)\wedge {\bf 1}_{[N]}\to \dot\Theta(0,t)\wedge {\bf 1}_{[N]}$ is bounded by a linear combination of $m$ and $e^{-t/m}$, with coefficients depending on $\kappa$, $\mathcal{D}(\Omega^0-\mathcal{V})$, $\mathcal{D}(\mathcal{V})$ and $e^{2\kappa t}$:
\begin{align}\label{eq:C1-approx-rel-vanilla}
\begin{aligned}
& \mathcal{D}(\dot\Theta(m,t)-\dot\Theta(0,t)) \\
& \hspace{0.2cm} < \Big (\mathcal{D}(\Omega^0-\mathcal{V} \Big )+2\kappa)e^{-t/m}+ 2m\kappa(\mathcal{D}(\mathcal{V})+2\kappa)  +2m\kappa \Big (\mathcal{D}(\Omega^0-\mathcal{V})+2\kappa \Big )e^{2\kappa t},\quad t\ge 0.
\end{aligned}
\end{align}
\item 
(Convergence of $\Theta^{(n)}(m, \cdot)$ with $n \geq 2$): $\Theta(m,t)$ converges to $\Theta(0,t)$ as $m\to 0$ in the Fr\'echet topology $C^\infty(0,\infty)$. Quantitatively, for $n\ge 2$, we have
\begin{align*}
    &\|\Theta^{(n)}(m,t)-\Theta^{(n)}(0,t)\|_\infty\\
    & \hspace{0.5cm} \le (n-1)!\left(2\kappa+\|\Omega^0\|+\|\mathcal{V}\|+\frac{9}{8m}\right)^{n}\left(1+\frac tm\right)^{n}e^{-t/m}\\
    &  \hspace{0.7cm} + \frac 98m\kappa \cdot  n!e^{2\kappa t}\left(2\kappa+\mathcal{D}(\Omega^0)+\mathcal{D}(\mathcal{V})+\frac{9}{8m}\right)^{n}\left(1+\frac tm\right)^{n}e^{-t/m}\\
& \hspace{0.7cm} + \frac 34m\kappa \cdot  (n+1)!e^{2\kappa t} \Big(2\kappa+\mathcal{D}(\Omega^0)+\mathcal{D}(\mathcal{V}) \Big)^{n}(1-e^{-t/m}),
\end{align*}
and
\begin{align*}
    &\mathcal{D}(\Theta^{(n)}(m,t)-\Theta^{(n)}(0,t))\\
    &  \hspace{0.5cm} \le 2(n-1)!\left(2\kappa+\mathcal{D}(\Omega^0)+\mathcal{D}(\mathcal{V})+\frac{9}{8m}\right)^n\left(1+\frac tm\right)^n e^{-t/m}\\
    &  \hspace{0.7cm} +\frac 32(n+1)!m\kappa e^{2\kappa t}\left(2\kappa+\mathcal{D}(\Omega^0)+\mathcal{D}(\mathcal{V})+\frac{9}{8m}\right)^n\left(1+\frac tm\right)^n e^{-t/m}\\
    & \hspace{0.7cm} +\frac 32(n+1)!m\kappa e^{2\kappa t} \Big (2\kappa+\mathcal{D}(\Omega^0)+\mathcal{D}(\mathcal{V}) \Big)^n(1-e^{-t/m}).
\end{align*}
\end{enumerate}
\end{proposition}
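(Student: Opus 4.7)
The plan is to reduce everything to Duhamel's principle \eqref{B-4} and compare the inertial solution against the first-order solution via Gronwall-type integral inequalities; the higher-order Part (3) then follows by induction on $n$. For Part (1), I would first integrate \eqref{B-4} once in time and interchange the order of integration in the resulting double integral to obtain
\[
\theta_i(m,t) - \theta_i^0 - \nu_i t = m(\omega_i^0 - \nu_i)(1-e^{-t/m}) + \frac{\kappa}{N}\sum_{j=1}^N \int_0^t \bigl(1-e^{-(t-s)/m}\bigr)\sin(\theta_j(m,s) - \theta_i(m,s))\,ds.
\]
Subtracting the integrated form of \eqref{A-2} and applying $|\sin a - \sin b| \le |a-b|$ together with the tail bound $\int_0^t e^{-(t-s)/m}\,ds \le m$, I get a Gronwall inequality for $\mathcal{D}(\Theta(m,t) - \Theta(0,t))$ (working in $\bigwedge^2\mathbb{R}^N$ so that the diameter is controlled pairwise by the pairwise difference), whose integration yields \eqref{eq:C0-approx-rel-vanilla}.

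For the absolute bound \eqref{eq:C0-approx-abs-vanilla}, the naive Gronwall carried out on $\|\Theta(m,t)-\Theta(0,t)\|_\infty$ would only produce $m(\|\Omega^0-\mathcal{V}\|_\infty+\kappa)e^{2\kappa t}$, which is weaker than the stated bound in its common-mode part. To sharpen this, I would invoke the Galilean symmetry \eqref{eq:galilean} with the choice $\omega - \nu = \tfrac 12(\max_i + \min_i)(\omega_i^0 - \nu_i)$ to make the midpoint of $\tilde{\Omega}^0 - \tilde{\mathcal{V}}$ zero; identity \eqref{eq:diameter-max-diff} then gives $\|\tilde\Omega^0 - \tilde{\mathcal{V}}\|_\infty = \tfrac 12\mathcal{D}(\Omega^0-\mathcal{V})$. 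Running Gronwall in the shifted frame and undoing the shift (which contributes a uniform-in-$i$ term of magnitude $m|\max+\min|/2 \cdot (1-e^{-t/m})$) produces exactly \eqref{eq:C0-approx-abs-vanilla}.

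For Part (2), once Part (1) is established, I would compare $\dot\theta_i(m,t)$ to $\dot\theta_i(0,t) = \nu_i + \frac{\kappa}{N}\sum_j \sin(\theta_j(0,t)-\theta_i(0,t))$ directly via Duhamel's formula \eqref{B-4} for the inertial velocity. The key observation is that the kernel $\frac 1m e^{-(t-s)/m}$ is a probability density on $(-\infty,t]$ concentrated within width $O(m)$ of $s=t$, so I can replace the inertial sine integrand by its outer-solution value at $s=t$. The total replacement error decomposes into (i) the exponentially decaying initial-layer $(\|\Omega^0 - \mathcal{V}\|_\infty + \kappa)e^{-t/m}$ coming from the $\omega_i^0 e^{-t/m}$ and $\nu_i(1-e^{-t/m})$ terms in Duhamel, (ii) a contribution of order $m\kappa(\mathcal{D}(\mathcal{V})+2\kappa)$ from the finite-propagation-speed bound Lemma \ref{L2.2} controlling oscillations on the $O(m)$ scale, and (iii) the $C^0$ error from Part (1) propagated through the Lipschitz sine, yielding \eqref{eq:C1-approx-abs-vanilla} and \eqref{eq:C1-approx-rel-vanilla}.

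For Part (3), I would induct on $n \ge 2$. Differentiating \eqref{A-1} yields the recursion $m\dot\omega_i^{(k)} + \omega_i^{(k)} = \frac{\kappa}{N}\sum_j \frac{d^k}{dt^k}\sin(\theta_j - \theta_i)$, so each additional derivative of $\omega_i$ either introduces a factor of $m^{-1}$ (from inverting the linear first-order operator in $\omega_i^{(k)}$) paired with an initial-layer factor of the form $(1+t/m)^k e^{-t/m}$, or brings down the Leibniz expansion of the $\sin$-derivatives. The factorials $(n-1)!$, $n!$, $(n+1)!$ and the characteristic factor $(1+t/m)^n e^{-t/m}$ in the stated bounds arise, respectively, from the combinatorial bookkeeping of the Leibniz expansion applied to the sine nonlinearity and from inductively computing derivatives of the boundary-layer exponential; the polynomial factors in $\kappa$, $\|\Omega^0\|_\infty$, $\|\mathcal{V}\|_\infty$, $\mathcal{D}(\Omega^0)$, $\mathcal{D}(\mathcal{V})$ (including the slightly awkward $\tfrac{9}{8m}$ constant) are supplied by Lemma \ref{L2.2} and its derivative analogue. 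The main technical obstacle, and the reason the proof is deferred to Appendix \ref{app:mto0}, is precisely this bookkeeping: once the correct inductive hypothesis is chosen so that all terms split cleanly into an initial-layer piece $(1+t/m)^n e^{-t/m}$ and an outer-layer piece $(1-e^{-t/m})$, the induction closes mechanically using Parts (1) and (2).
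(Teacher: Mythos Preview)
Your proposal is correct and follows essentially the same approach as the paper: integrate Duhamel's formula \eqref{B-4} once to get an integral inequality with kernel $(1-e^{-(t-s)/m})$, close it by Gr\"onwall, then feed the resulting $C^0$ bound back into Duhamel for Part~(2), and finally induct using Fa\`a di Bruno for Part~(3). Two minor tactical differences are worth noting: for the diameter bound \eqref{eq:C0-approx-rel-vanilla} the paper uses a bespoke integral Gr\"onwall lemma (Lemma~\ref{lem:2nd-Gronwall}) to first obtain the sharper intermediate estimate \eqref{eq:C0-approx-rel} before relaxing to \eqref{eq:C0-approx-rel-vanilla}, whereas your plain Gr\"onwall (bounding the kernel by~$1$) goes directly to the stated form; and for the absolute bound \eqref{eq:C0-approx-abs-vanilla} the paper simply substitutes the already-proved diameter bound into the $i$-th component inequality and invokes \eqref{eq:diameter-max-diff}, rather than using your Galilean recentering trick---both routes yield the same split into a midpoint term and a half-diameter term.
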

\begin{proof}
    Since the proofs are lengthy and technical, we postpone them to Appendix~\ref{app:mto0}. In the proof, we use an estimate for the integral form of Gr\"onwall-type lemma in Appendix~\ref{app:sturm-picone}.
\end{proof}

\begin{question}[General quantitative Tikhonov statement]
Let $N\ge 1$ be a fixed integer, and let $F:\mathbb{R}^N\to\mathbb{R}^N$ have $C^\infty$ regularity. Let the vector $(\Theta(m,t),\Omega(m,t))$ with $2N$ entries be the unique solution to the ODE
\[
\begin{cases}
	\displaystyle  \frac {d}{dt}\Theta(m,t)=\Omega(m,t),&\\
	\displaystyle  m\frac {d}{dt}\Omega(m,t)=-\Omega(m,t)+F(\Theta(m,t)),
\end{cases}
\]
with fixed initial conditions:
\[
(\Theta(m,0),\Omega(m,0))=(\Theta^0,\Omega^0).
\]
Likewise, let the vector $(\Theta(0,t),\Omega(0,t))$ with $2N$ entries be the unique solution to the ODE
\[
\begin{cases}
	\displaystyle    \frac {d}{dt}\Theta(0,t)=\Omega(0,t),&\\
	\displaystyle     0=-\Omega(0,t)+F(\Theta(0,t)),
\end{cases}
\]
with initial conditions:
\[
(\Theta(0,0),\Omega(0,0))=(\Theta^0,F(\Theta^0)).
\]
Tikhonov's theorem \cite[Theorem 1.1]{V63} tells us that in this formulation, $\Theta(m,\cdot)\to \Theta(0,\cdot)$ as $m\to 0$ in the $C[0,\infty)$ topology, and $\Omega(m,\cdot)\to \Omega(0,\cdot)$ as $m\to 0$ in the $C(0,\infty)$ topology. Do we have the following refined statements?

	\begin{enumerate}
	\item The convergence of $\Theta_i(m,t)$ to $\Theta_i(0,t)$ as $m\to 0$ in the Fr\'echet topology $C^0([0,\infty))$ is quantitatively given as, for some constant $c>0$,
	\[
	\|\Theta(m,t)-\Theta(0,t)\|_\infty\le cm \left(\|\Omega^0\|_\infty+\|F\|_{\infty}\right)e^{c\|F\|_{C^{1}} t},\quad t\ge 0,
	\]
	where $\|\cdot\|_{C^n}$ denotes the $C^n$ norm.
	\item 
	We have the convergence of $\Theta(m,t)$ to $\Theta(0,t)$ as $m\to 0$ in the Fr\'echet topology $C^\infty(0,\infty)$, and quantitatively, for $n\ge 1$, we have for $t\ge 0$,
	\begin{align*}
		&|\Theta^{(n)}(m,t)-\Theta^{(n)}(0,t)|\\
		&\le C\left(\left(1+\frac tm\right)^ne^{-t/m}(1+mC e^{2C t})+mC e^{2C t}\right),
	\end{align*}
	where $C$ depends on $\|F\|_{C^{n+1}}$.

\end{enumerate}
\end{question}

%
%
%
%
\section{Proof of Theorem \ref{thm:qualitative_mto0}} \label{sec:4}
\setcounter{equation}{0}

In this section, we provide the proof of Theorem \ref{thm:qualitative_mto0}. Before we do so, we first recall a result on partial phase-locking.

\subsection{A partial phase-locking result}

In this subsection, we introduce the notion of partial phase-locking, following \cite{C-D-H-R25}, where the definition is motivated from observation of partially ordered behavior among oscillators \cite{B-W}.
\begin{definition}[{\cite[Definition 2.2]{C-D-H-R25}}, {\cite[Definition 4.1]{H-K-R}}, {\cite[Definition 2.2]{H-R}}]\label{def:partial}
	Let $(\Theta(t), {\dot \Theta}(t))$ be a global solution to the Cauchy problem \eqref{A-1}.
	\begin{enumerate}
		\item
		Given $\mathcal{A}\subset [N]$, we say the solution $(\Theta(t), {\dot \Theta}(t))$ exhibits \emph{$\mathcal{A}$-partial phase-locking} if
		\[ 
		\sup_{t\ge 0}\sup_{i,j\in \mathcal{A}}|\theta_i(t)-\theta_j(t)|<\infty.
		\]
		\item
		Given $\lambda\in (0,1]$, we say the solution $(\Theta(t), {\dot \Theta}(t))$ exhibits \emph{$\lambda$-partial phase-locking} if there exists $\mathcal{A}\subset[N]$ with $|\mathcal{A}|\ge \lambda N$ such that the solution $(\Theta(t), {\dot \Theta}(t))$ exhibits $\mathcal{A}$-partial phase-locking.
	\end{enumerate}
\end{definition}

We recall a key ingredient of \cite{C-D-H-R25} for providing a sufficient framework for achieving asymptotic phase-locking, namely the partial phase-locking of a majority cluster. Specifically, it was shown in \cite[Theorem 4.1]{C-D-H-R25} that once a majority cluster forms in a sufficiently small neighborhood, the diameter of that cluster stays uniformly bounded over time. 

The precise statement is given below in Theorem \ref{L4.4}. We remark that this is an abridged form of \cite[Theorem 4.1]{C-D-H-R25} to highlight only the stability property, since this statement is all we need for the proof of Theorem \ref{thm:qualitative_mto0}; the original text of \cite[Theorem 4.1]{C-D-H-R25} also asserts the asymptotic arrangement of oscillators according to the natural frequencies $\nu_i$, and the uniqueness of the maximal majority cluster.

\begin{theorem}[{\cite[Theorem 4.1]{C-D-H-R25}}]\label{L4.4}
	Suppose that the free real parameters $\lambda, \ell>0$ and index set ${\mathcal A} \subset [N]$ satisfy
	\begin{equation*}
		\frac{1}{2} < \lambda \leq 1, \quad \ell \in\left(0,2\cos^{-1} \Big( \frac{1}{\lambda} - 1 \Big) \right), \quad |\mathcal{A}|\ge \lambda N,
	\end{equation*}
	and that the system parameters and the free real parameter $\eta>0$ satisfy the following for the index set $\mathcal{A}\subset [N]$:
	\begin{align}\label{eq:xi-partial}
		\begin{aligned}
			\xi(m,\kappa,\mathcal{V}_\mathcal{A},\Omega^0_\mathcal{A},\eta)&\coloneqq m\mathcal{D}(\mathcal{V}_\mathcal{A})+2m\kappa+\frac{\mathcal{D}(\mathcal{V}_\mathcal{A})}{2\kappa} \\
			&+\mathcal{D}(\Omega^0_\mathcal{A})m\max\{1,\eta\}e^{-\max\{1,\eta\}}+\frac{\mathcal{D}(\Omega^0_\mathcal{A})}{2\kappa}\frac{e^{-\eta}}{1-e^{-\eta}}\\
			&<\frac\lambda 2\sin \ell-(1-\lambda)\sin\frac{\ell}{2}.
		\end{aligned}
	\end{align}
	Let $(\Theta,\Omega)$ be a global solution to \eqref{B-1}. Assume there exists a time $t_1 \geq \eta m$ such that the subensemble $\Theta_\mathcal{A}=(\theta_\ell)_{\ell\in \mathcal{A}}$ satisfies
	\begin{align*}
		\mathcal{D}(\Theta_\mathcal{A}(t_1))\le \ell.
	\end{align*}
	Then, the following assertions hold.
	\begin{enumerate}
		\item (Stability of the majority cluster): One has
		\[
		\sup_{t\ge t_1}\mathcal{D}(\Theta_\mathcal{A}(t))\le \ell, \quad 
		\limsup_{t\rightarrow\infty}\mathcal{D}(\Theta_\mathcal{A}(t)) <\frac{3\pi}{4(2\lambda-1)}\left(2m\mathcal{D}(\mathcal{V}_\mathcal{A})+4m\kappa+\frac{\mathcal{D}(\mathcal{V}_\mathcal{A})}{\kappa}\right).
		\]
		\item (The majority cluster $\mathcal{A}$ confines a larger cluster $\mathcal{B}$)
		Assume that there is an index set $\mathcal{B}\subset [N]$ with $\mathcal{B}\supset \mathcal{A}$ satisfying the following variant of \eqref{eq:xi-partial}:
		\begin{equation*}
			\xi(m,\kappa,\mathcal{V}_\mathcal{B},\Omega^0_\mathcal{B},\infty)=m\mathcal{D}(\mathcal{V}_\mathcal{B})+2m\kappa+\frac{\mathcal{D}(\mathcal{V}_\mathcal{B})}{2\kappa}<\frac\lambda 2\sin \ell-(1-\lambda)\sin\frac{\ell}{2}.
		\end{equation*}
		Then, the ensemble $\Theta_\mathcal{B}$ is partially phase-locked:
		\[
		\sup_{t\ge 0}\mathcal{D}(\Theta_\mathcal{B}(t))<\infty.
		\]
		In particular, if $\mathcal{B}=[N]$, then asymptotic phase-locking occurs.
		
	\end{enumerate}
\end{theorem}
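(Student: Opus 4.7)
The plan is to reduce Theorem \ref{thm:qualitative_mto0} to the asymptotic behavior of the first-order identical-oscillator Kuramoto model via Proposition \ref{prop:mto0quant}, then extend the information to all times via Theorem \ref{L4.4}. I begin with the auxiliary Cauchy problem: \eqref{A-2} with $\mathcal{V}$ replaced by the zero vector and initial data $\Theta^0$; denote its solution by $\tilde\Theta(t)$. The Van Hemmen--Wreszinski identity gives $\dot R(\tilde\Theta(t)) \ge 0$, so by LaSalle $\tilde\Theta(t)$ converges to a phase-locked state $\tilde\Theta^\infty$. Since $R^0 > 0$ rules out the $R = 0$ equilibria, the limit is a bipartition: every coordinate of $\tilde\Theta^\infty$ equals one of two antipodal angles. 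Write $|\mathcal{A}_\star|$ for the majority cluster size. I check case-by-case: (i) for $N = 2$, $|\mathcal{A}_\star| = 2$ since the only alternative $(1,1)$-equilibrium has $R = 0$; (ii) for $N = 3$, $|\mathcal{A}_\star| \ge 2 = N-1$ since the only equilibria with $R > 0$ are $(3,0)$ and $(2,1)$; (iii) for distinct initial phases with $N \ge 4$, $|\mathcal{A}_\star| \ge N - 1$ --- see the obstacle discussion below; (iv) otherwise, monotonicity alone yields $|\mathcal{A}_\star|/N \ge (1 + R^0)/2$.

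Next, pick a finite horizon $T = T(\Theta^0, \varepsilon)$ large enough that (after factoring out any overall rotation) $\tilde\Theta_{\mathcal{A}_\star}(T)$ has diameter $< \ell/3$, where $\ell = \ell(\varepsilon)$ is small enough for the sequel, and the complementary oscillators lie within $\ell/3$ of the antipodal position. Then choose $a, b, c$ small enough that Proposition \ref{prop:mto0quant}(1) combined with continuous dependence of \eqref{A-2} on $\mathcal{V}$ yields $\|\Theta(m, T) - \tilde\Theta(T)\|_\infty < \ell/3$ for the inertial solution. The $e^{2\kappa T}$ factor in the Tikhonov estimate is harmless because $T$ depends only on $\Theta^0$ and $\varepsilon$, so it is absorbed into the constants $a, b, c$.

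Now apply Theorem \ref{L4.4}(2) with $\mathcal{A} = \mathcal{A}_\star$, $\lambda = |\mathcal{A}_\star|/N > 1/2$, $\ell$ as above, $t_1 = T$, $\mathcal{B} = [N]$, and a large $\eta \le T/m$. Conditions \eqref{eq:xi-partial} and its $\mathcal{B}$-variant hold by the smallness of $a, b, c$. The conclusion yields asymptotic phase-locking of the whole inertial system, together with $\limsup_{t\to\infty}\mathcal{D}(\Theta_{\mathcal{A}_\star}(t)) = O(m\kappa + \mathcal{D}(\mathcal{V})/\kappa)$, which can be made smaller than $\varepsilon$. Since the minority oscillators, tracked through the Tikhonov step, remain close to the antipodal point of the cluster $\mathcal{A}_\star$, the limiting order parameter satisfies $\lim_{t\to\infty} R(t) \ge (2\lambda - 1) - O(\varepsilon)$. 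Substituting the case-by-case values of $\lambda$ --- namely $1$ in case (i), $2/3$ or $(N-1)/N$ in cases (ii) and (iii), and $(1 + R^0)/2$ in case (iv) --- gives the three bounds $> 1 - \varepsilon$, $> 1 - 2/N - \varepsilon$, and $> R^0 - \varepsilon$ respectively (after shrinking $\varepsilon$ by an absolute constant at the start to absorb the $O$-terms).

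The principal obstacle lies in Step (iii): ensuring $|\mathcal{A}_\star| \ge N - 1$ for distinct initial phases with $N \ge 4$. An initial configuration of all-distinct phases could a priori lie on the stable manifold of an $(n, N-n)$-equilibrium with minority cluster of size $\ge 2$; one must show that this stable manifold cannot be approached from such distinct-phase configurations. A promising route is via the Watanabe--Strogatz reduction, which rewrites the identical first-order dynamics as a three-parameter M\"obius group action on the torus and admits $N - 3$ additional constants of motion; these constants should preclude convergence to highly degenerate equilibria starting from generic distinct-phase data. Alternatively, a second-moment or Lyapunov argument exploiting the repulsive within-minority linearization could rule out the degenerate limits directly.
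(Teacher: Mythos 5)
The proposal does not prove the stated Theorem~\ref{L4.4}; it instead proves Theorem~\ref{thm:qualitative_mto0}, taking Theorem~\ref{L4.4} (together with Proposition~\ref{prop:mto0quant}) as given. Theorem~\ref{L4.4} is a confinement/stability theorem for a majority cluster, imported into this paper without proof from \cite{C-D-H-R25}; proving it requires showing that once $\mathcal{D}(\Theta_\mathcal{A})\le\ell$ at some time $t_1\ge\eta m$, the cluster diameter can never cross $\ell$ again, which in \cite{C-D-H-R25} is done through a barrier/Gr\"onwall argument on $\mathcal{D}(\Theta_\mathcal{A})$ using the Duhamel representation \eqref{B-4} and the bookkeeping encoded in the functional $\xi$. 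None of this appears in your write-up. Since Theorem~\ref{L4.4} is precisely the black box that the proof of Theorem~\ref{thm:qualitative_mto0} invokes, you cannot derive Theorem~\ref{L4.4} by running that argument --- it would be circular.

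Read instead as a proof of Theorem~\ref{thm:qualitative_mto0}, your proposal tracks the paper's Section~\ref{sec:4} argument closely: run the identical first-order model to a small cluster, transfer to the nonidentical first-order model by Gr\"onwall (\eqref{eq:id-nonid-comparison}), then to the inertial model by Proposition~\ref{prop:mto0quant}, and finally apply Theorem~\ref{L4.4}. The gap you flag in step (iii) --- that \eqref{eq:init-nonidentical} forces $|\mathcal{A}_\star|\ge N-1$ --- is exactly what you suspect: the cross-ratios of the $e^{\mathrm{i}\theta_j}$ are invariants of the identical Kuramoto flow (the Watanabe--Strogatz/M\"obius structure), so a minority of size $\ge 2$ would force a degenerate limiting cross-ratio, contradicting the nondegenerate initial value; the paper dispatches this in one line. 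Two smaller points of divergence from the paper's argument: (a) the paper works with the diameter $\mathcal{D}(\Phi^{\mathrm{NID,1st}}-\Phi^{\mathrm{ID,1st}})$ rather than $\|\cdot\|_\infty$, which matters because the absolute positions drift linearly at rate $|\nu_c|$ unless you first Galilean-normalize; you gesture at this (``factoring out any overall rotation'') but it should be made precise before invoking Proposition~\ref{prop:mto0quant}(1); (b) your claim that the minority oscillators remain near the antipode for all $t$ is neither proved nor needed --- the paper simply bounds each minority contribution to $R$ below by $-1$, giving $R\ge\lambda\cos(\ell/2)-(1-\lambda)$.
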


\subsection{Proof of Theorem \ref{thm:qualitative_mto0}: approximation to the identical oscillator case of \eqref{A-2}}

Now we begin the proof of Theorem \ref{thm:qualitative_mto0} using the partial phase-locking statement, namely Theorem \ref{L4.4}, and the quantitative Tikhonov result, namely Proposition \ref{prop:mto0quant}. The idea is to use Proposition \ref{prop:mto0quant} to approximate the inertial Kuramoto model \eqref{A-1} to the first-order Kuramoto model \eqref{A-2}, and then see \eqref{A-2} as a perturbation of the Kuramoto model \eqref{A-2} with identical intrinsic frequencies $\nu_i=\nu_j$ when $\kappa$ is large.

If $N=2$, we may assume $\varepsilon<\frac 12$, and if $N\ge 3$, we may assume 
\[ \varepsilon<\min \Big \{\frac {R^0}{2},\frac 12-\frac 1N \Big \}, \]
by possibly replacing $\varepsilon$ with $\min\{\varepsilon,\frac {R^0}{4},\frac 14-\frac 1{2N}\}$. We denote by $\tau=\kappa t$ the normalized time.\newline

Given the initial data $\Theta^0$ with $R^0>0$, we denote by $\Phi^{\mathrm{ID,1st}}=\Phi^{\mathrm{ID,1st}}(\tau)$ the solution to the first-order Kuramoto model with identical intrinsic frequencies and unit coupling strength:
\begin{align*}
	\begin{cases}
		\displaystyle \dot\phi_i^{\mathrm{ID,1st}}(\tau) = \frac{1}{N}\sum_{j=1}^N \sin(\phi_j^{\mathrm{ID,1st}}(\tau) - \phi_i^{\mathrm{ID,1st}}(\tau)),\quad \tau > 0,\\
		\displaystyle \phi_i^{\mathrm{ID,1st}}(0) = \theta_i^0, \quad ~i\in [N].
	\end{cases}
\end{align*}
By \cite{H-K-R,H-R}, $\Phi^{\mathrm{ID,1st}}$ tends to either the completely synchronized state, or a bipolar state with order parameter $\ge R^0$, i.e., there exists $\lambda\in [\frac {1+R^0}2,1]$, $\phi^\infty\in\mathbb{R}$, and an index set $\mathcal{A}\subset [N]$ such that 
\[
|\mathcal{A}|\ge \lambda N \quad \mbox{and} \quad 
\begin{cases}
	\phi_i^{\mathrm{ID,1st}}\to \phi^\infty \mathrm{~as~}t\to\infty &\mathrm{if~}i\in \mathcal{A},\\
	\phi_i^{\mathrm{ID,1st}}\to \phi^\infty+\pi \mathrm{~as~}t\to\infty   &\mathrm{if~}i\in [N]\setminus\mathcal{A},
\end{cases}
\]
up to $2\pi$-translations.\newline

\noindent If $N=2$, then the only possibility is 
\[ \mathcal{A}=[N] \quad \mbox{and} \quad \lambda=1. \]
If $N=3$, we must have 
\[ |\mathcal{A}|\ge 2 \quad \mbox{and} \quad  \lambda\ge \frac 23. \]
If $N\ge 4$ and \eqref{eq:init-nonidentical} holds, then since the cross-ratios are constants of motion for $\Phi^{\mathrm{
		ID,1st}}$, we must have 
\[ |\mathcal{A}|\ge N-1 \quad \mbox{and} \quad \lambda \ge \frac{N-1}{N}. \]
Otherwise, if $N\ge 4$ but \eqref{eq:init-nonidentical} doesn't hold, all we can say is that
\[ |\mathcal{A}|\ge \frac{1+R^0}{2}N \quad \mbox{and} \quad \lambda \ge \frac{1+R^0}{2}. \]

Given this $\lambda$, we choose $\ell \in\left(0,2\cos^{-1} \Big( \frac{1}{\lambda} - 1 \Big) \right)$ small enough so that
\[ \cos \frac \ell 2>1-\frac \varepsilon \lambda. \]
Then, we have
\begin{align}\label{qualitative_mto0-A2}
	\lambda \cos \frac\ell 2-(1-\lambda)>\lambda(1-\frac\varepsilon\lambda)-(1-\lambda)\ge
	\begin{cases}
		1-\varepsilon&\mathrm{if~}N=2,\\
		1-\frac 2N-\varepsilon&\mathrm{if~}N=3\mathrm{~or~}\eqref{eq:init-nonidentical}\mathrm{~holds},\\
		R^0-\varepsilon&\mathrm{otherwise}.
	\end{cases}
\end{align}
Choose a time $\tau_0\ge 1$ such that
\[
|\phi_i^{ \mathrm{ID,1st}}(\tau_0)- \phi^\infty|<\frac \ell 6,\quad  \forall~ i\in \mathcal{A}.
\]
This implies
\[
\mathcal{D}(\Phi^{ \mathrm{ID,1st}}_\mathcal{A}(\tau_0))<\frac \ell 3.
\]
Note that $\ell=\ell(\Theta^0,\varepsilon)$ and $\tau_0=\tau_0(\Theta^0,\varepsilon)$ can be chosen to be the functions of $\Theta^0$ and $\varepsilon$ only. \newline

Now, we choose $a,b,c>0$ small enough so that
\begin{equation}\label{eq:abc-1}
	a\le \frac \ell 3e^{-2\tau_0}, \quad  c\left(a+b+2\right)\le\frac{\ell}{3} e^{-2\tau_0}, \quad  ac+2c+\frac a2+\frac {bc}2  +\frac{b}{2}
	<\frac\lambda 2\sin\ell-(1-\lambda)\sin\frac \ell 2.
\end{equation}
We note that since $\ell$ was chosen in $\left(0,2\cos^{-1} \Big( \frac{1}{\lambda} - 1 \Big) \right)$, we have 
\[ \frac\lambda 2\sin\ell-(1-\lambda)\sin\frac \ell 2>0, \]
so the last condition is feasible. We denote by  $\Phi^{\mathrm{NID,1st}}$ the solution to the first order Kuramoto model, in normalized time $\tau$, with possibly nonidentical normalized intrinsic velocities $\mathcal{V}/\kappa$:
\begin{align*}
	\begin{cases}
		\displaystyle \dot\phi_i^{\mathrm{NID,1st}}(\tau) = \frac{\nu_i}\kappa + \frac{1}{N}\sum_{j=1}^N \sin(\phi_j^{\mathrm{NID,1st}}(\tau) - \phi_i^{\mathrm{NID,1st}}(\tau)),\quad \tau > 0,\\
		\displaystyle \phi_i^{\mathrm{NID,1st}}(0) = \theta_i^0,\quad i\in [N].
	\end{cases}
\end{align*}
Analogously to \cite[Lemma 4.2]{H-K-R}, we have
\begin{equation}\label{eq:id-nonid-comparison}
	\mathcal{D}(\Phi^{\mathrm{NID,1st}}(\tau)-\Phi^{\mathrm{ID,1st}}(\tau))
	\le \frac{\mathcal{D}(\mathcal{V})}{\kappa}\left(e^{2\tau}-1\right),\quad \tau\ge 0.
\end{equation}
Indeed, we have, for $i,j\in [N]$,
\begin{align*}
	\begin{aligned}
		&\dot\phi_i^{\mathrm{NID,1st}}-\dot\phi_j^{\mathrm{NID,1st}}-\dot\phi_i^{\mathrm{ID,1st}}+\dot\phi_j^{\mathrm{ID,1st}}\\
		& \hspace{0.5cm} =\frac{\nu_i-\nu_j}\kappa+\frac{1}{N}\sum_{k=1}^N \left(\sin(\phi_k^{\mathrm{NID,1st}} - \phi_i^{\mathrm{NID,1st}})-\sin(\phi_k^{\mathrm{ID,1st}} - \phi_i^{\mathrm{ID,1st}})\right)\\
		& \hspace{0.5cm}-\frac{1}{N}\sum_{k=1}^N \left(\sin(\phi_k^{\mathrm{NID,1st}} - \phi_j^{\mathrm{NID,1st}})-\sin(\phi_k^{\mathrm{ID,1st}} - \phi_j^{\mathrm{ID,1st}})\right).
	\end{aligned}
\end{align*}
Now, we take the absolute values of the above relation to get 
\[
\left|\dot\phi_i^{\mathrm{NID,1st}}-\dot\phi_j^{\mathrm{NID,1st}}-\dot\phi_i^{\mathrm{ID,1st}}+\dot\phi_j^{\mathrm{ID,1st}}\right|\le \frac{\mathcal{D}(\mathcal{V})}\kappa+2\mathcal{D}(\Phi^{\mathrm{NID,1st}}(\tau)-\Phi^{\mathrm{ID,1st}}(\tau)),
\]
for $i,j\in [N]$.  Taking any pair $i,j\in [N]$ which maximizes the quantity 
\[ \phi_i^{\mathrm{NID,1st}}-\phi_j^{\mathrm{NID,1st}}-\phi_i^{\mathrm{ID,1st}}+\phi_j^{\mathrm{ID,1st}},\]
we find that
\[
D^+\mathcal{D}(\Phi^{\mathrm{NID,1st}}(\tau)-\Phi^{\mathrm{ID,1st}}(\tau))\le \frac{\mathcal{D}(\mathcal{V})}\kappa+2\mathcal{D}(\Phi^{\mathrm{NID,1st}}(\tau)-\Phi^{\mathrm{ID,1st}}(\tau)),
\]
from which \eqref{eq:id-nonid-comparison} follows by Gr\"onwall's inequality. \newline

\noindent By condition $\eqref{eq:abc-1}_1$, $\frac{\mathcal{D}(\mathcal{V})}{\kappa}\le a\le \frac \ell 3e^{-2\tau_0}$ and \eqref{eq:id-nonid-comparison}, we have
\[
\mathcal{D}(\Phi^{\mathrm{NID,1st}}_\mathcal{A}(\tau_0)-\Phi^{\mathrm{ID,1st}}_\mathcal{A}(\tau_0)) \le \mathcal{D}(\Phi^{\mathrm{NID,1st}}(\tau_0)-\Phi^{\mathrm{ID,1st}}(\tau_0))
\le \frac{\mathcal{D}(\mathcal{V})}{\kappa}\left(e^{2\tau_0}-1\right)< \frac \ell 3,
\]
and
\[
\mathcal{D}( \Phi^{\mathrm{NID,1st}}_\mathcal{A}(\tau_0)) \stackrel{\mathclap{\eqref{eq:triangle-ineq}}}{\le} \mathcal{D}(\Phi^{\mathrm{ID,1st}}_\mathcal{A}(\tau_0)) + \mathcal{D}(\Phi^{\mathrm{NID,1st}}_\mathcal{A}(\tau_0)-\Phi^{\mathrm{ID,1st}}_\mathcal{A}(\tau_0)) < \frac \ell3 +\frac \ell3=\frac {2\ell}3.
\]
Now, we set $\Phi(\tau)=\Theta(\tau/\kappa)$. Then, this solves the second-order Cauchy problem in normalized time:
\begin{align*}
	\begin{cases}
		\displaystyle m\kappa\ddot\phi_i(\tau) +\dot\phi_i(\tau) = \frac{\nu_i}\kappa + \frac{1}{N}\sum_{j=1}^N \sin(\phi_j(\tau) - \phi_i(\tau)),\quad \tau > 0,\\
		\displaystyle \phi_i(0) = \theta_i^0,~\dot\phi_i(0)=\frac{\omega_i^0}{\kappa},\quad i\in [N].
	\end{cases}
\end{align*}
By \eqref{eq:C0-approx-rel-vanilla} of Proposition \ref{prop:mto0quant}, we have
\[
\|(\Phi^{\mathrm{NID,1st}}(\tau)-\Phi(\tau))\wedge {\bf 1}_{[N]}\|_\infty \stackrel{\mathclap{\eqref{eq:C0-approx-rel-vanilla}}}{<} m\kappa\left( \frac{\mathcal{D}(\Omega^0 -\mathcal{V})}{\kappa } +2\right) e^{2\tau} 
\stackrel{\mathclap{\eqref{eq:triangle-ineq}}}{\le}  m\left(\mathcal{D}(\Omega^0)+\mathcal{D}(\mathcal{V})+2\kappa\right)e^{2 \tau},\quad \tau\ge 0.
\]
In particular, by condition $\eqref{eq:abc-1}_2$, we have
\[
m\left(\mathcal{D}(\Omega^0)+\mathcal{D}(\mathcal{V})+2\kappa\right)=m\kappa\left(\frac{\mathcal{D}(\Omega^0)}\kappa+\frac{\mathcal{D}(\mathcal{V})}\kappa+2\right)\le c(a+b+2)\le \frac{\ell e^{-2\tau_0}}{3},
\]
so that
\[
\|(\Phi^{\mathrm{NID,1st}}_\mathcal{A}(\tau_0)-\Phi_\mathcal{A}(\tau_0))\wedge {\bf 1}_{\mathcal{A}}\|_\infty \le \|(\Phi^{\mathrm{NID,1st}}(\tau_0)-\Phi(\tau_0))\wedge {\bf 1}_{[N]}\|_\infty < m\left(\mathcal{D}(\Omega^0)+\mathcal{D}(\mathcal{V})+2\kappa\right)e^{2 \tau_0}\le \frac{\ell}{3},
\]
and so
\[
\mathcal{D}(\Theta_\mathcal{A}(\tau_0/\kappa))=\mathcal{D}(\Phi_\mathcal{A}(\tau_0))
\stackrel{\mathclap{\eqref{eq:triangle-ineq}}}{\le} \mathcal{D}( \Phi^{\mathrm{NID,1st}}_\mathcal{A}(\tau_0)) + \|(\Phi^{\mathrm{NID,1st}}_\mathcal{A}(\tau_0)-\Phi_\mathcal{A}(\tau_0))\wedge {\bf 1}_{[N]}\|_\infty< \frac {2\ell}3 +\frac \ell3=\ell.
\]
Note that, by condition $\eqref{eq:abc-1}_2$, $\ell<\pi$ and $\tau_0\ge 1$,
\[
2m\kappa\le 2c\le c\left(a+b+2\right)\le\frac{\ell e^{-2\tau_0}}{3}\le \frac{\pi e^{-2}}{3}<\frac 12,
\]
so that $m\kappa<\frac 14$. We invoke Theorem \ref{L4.4} at time $t=\tau_0/\kappa$, with $\mathcal{A}$ as it is, $\mathcal{B}=[N]$, and $\eta=1$. The hypotheses of Theorem \ref{L4.4} are satisfied since $t=\tau_0/\kappa\ge 1/\kappa>4m> \eta m$,
\begin{align*}
	\xi(m,\kappa,\mathcal{V}_\mathcal{A},\Omega^0_\mathcal{A},1)&=m\mathcal{D}(\mathcal{V}_\mathcal{A})+2m\kappa+\frac{\mathcal{D}(\mathcal{V}_\mathcal{A})}{2\kappa}+D(\Omega^0_\mathcal{A})m e^{-1}+\frac{\mathcal{D}(\Omega^0_\mathcal{A})}{2\kappa}\frac{e^{-1}}{1-e^{-1}}\\
	&\le m\kappa\cdot\frac{\mathcal{D}(\mathcal{V})}\kappa+2m\kappa+\frac{\mathcal{D}(\mathcal{V})}{2\kappa}+\frac {D(\Omega^0)}{\kappa}\cdot m\kappa +\frac{\mathcal{D}(\Omega^0)}{2\kappa}\\
	&\le ac+2c+\frac a2+bc+\frac b2 \stackrel{\mathclap{\eqref{eq:abc-1}_3}}{<}~\frac\lambda 2\sin\ell-(1-\lambda)\sin\frac \ell 2,
\end{align*}
and
\begin{align*}
	\xi(m,\kappa,\mathcal{V},\Omega^0,\infty)=m\mathcal{D}(\mathcal{V})+2m\kappa+\frac{\mathcal{D}(\mathcal{V})}{2\kappa}\le ac+2c+\frac a2~\stackrel{\mathclap{\eqref{eq:abc-1}_3}}{<}~\frac\lambda 2\sin\ell-(1-\lambda)\sin\frac \ell 2.
\end{align*}
Thus, by Theorem \ref{L4.4}, asymptotic phase-locking occurs, and 
\[ \mathcal{D}(\Theta_\mathcal{A}(t))\le \ell \quad \mbox{for $t\ge \tau_0/\kappa$}. \]
This gives that, denoting $\theta_a \coloneqq (\max_{i\in \mathcal{A}} \theta_i + \min_{i\in\mathcal{A}}\theta_i)/2$,
\begin{align*}
	R(t)&=\left|\frac{1}{N} \sum_{k=1}^N e^{\mathrm{i} \theta_k}\right|= \left|\frac{1}{N} \sum_{k=1}^N e^{\mathrm{i} \theta_k} e^{-\mathrm{i} \theta_a} \right| \ge  \frac{1}{N}\sum_{k=1}^N \cos\left(\theta_k-\theta_a\right) \\
	& = \frac{1}{N}\sum_{k\in\mathcal{A}} \cos\left(\theta_k-\theta_a\right)+\frac{1}{N}\sum_{k\in [N]\backslash\mathcal{A}} \cos\left(\theta_k-\theta_a\right)\\
	&\ge\frac{|\mathcal{A}|}{N}\cos \frac{\mathcal{D}(\Theta_\mathcal{A})}{2}-\frac{N-|\mathcal{A}|}{N} \ge \lambda\cos\frac \ell 2 - (1-\lambda),\quad t\ge \tau_0/\kappa,
\end{align*}
and so by \eqref{qualitative_mto0-A2},
\[
\lim_{t\to\infty}R(t)\ge
\begin{cases}
	1-\varepsilon&\mathrm{if~}N=2,\\
	1-\frac 2N-\varepsilon&\mathrm{if~}N=3\mathrm{~or~}\eqref{eq:init-nonidentical}\mathrm{~holds},\\
	R^0-\varepsilon&\mathrm{otherwise}.
\end{cases}
\]

%
%
%
%
\section{On reconstructing $\dot{\Theta}(t)$ from $\Theta(t)$ and $\dot{\Theta}^0$}\label{sec:bound}
\setcounter{equation}{0} 
Lemma \ref{L:approxaut} is a key tool in \cite{C-D-H-R25}, as it approximates the inertial Kuramoto model \eqref{A-1} to the first-order Kuramoto model \eqref{A-2}, in the sense of partially recovering $\dot\Theta(t)$ from $\Theta(t)$ and $\dot\Theta^0$, enabling the use of Gr\"onwall-type estimates. One may inquire the optimality of Lemma \ref{L:approxaut}, as improving upon Lemma \ref{L:approxaut} may bring improvements upon the results of \cite{C-D-H-R25}. In this section, we describe an improvement upon Lemma \ref{L:approxaut} given as Proposition \ref{prop:banach-contraction}, while we also show a theoretical limit to recovering $\dot\Theta(t)$ from $\Theta(t)$ and $\dot\Theta^0$, namely Proposition \ref{prop:determinability}.

From the point of view of \eqref{B-1}, $\Omega$ and $\Theta$ are essentially independent variables. Indeed, compared to the first-order model \eqref{A-2}, where the derivative $\dot{\Theta}(t)$ at a fixed time $t$ is solely determined by $\Theta(t)$ (forgetting about $\Theta^0$), the second-order model \eqref{A-1} is more difficult to analyze because the derivative $\dot{\Theta}(t)$ at a fixed time $t$ is not determined by $\Theta(t)$ (again, forgetting about $\Theta^0$ and $\dot\Theta^0$), but rather $\dot{\Theta}(t)$ together with $\Theta(t)$ determines the past and future dynamics of $\Theta$. However, this does not rule out the possibility of recovering information about $\dot{\Theta}(t)$ \emph{from $\Theta(t)$ and $\dot{\Theta}^0$}. Lemma \ref{L:approxaut} tells us how to estimate $\dot{\Theta}(t)$ from $\Theta(t)$ and $\dot{\Theta}^0$ (and $t$) up to a small additive error, and effectively reduce the dimension of system \eqref{A-1}-\eqref{B-1} from $2N$ (that of $(\Theta,\Omega)$) to $N$ (that of $\Theta$). Can we do better than Lemma \ref{L:approxaut}? Is it even possible to \emph{determine} $\dot{\Theta}(t)$ from $\Theta(t)$ and $\dot{\Theta}^0$?

In the proof of Lemma \ref{L:approxaut} given in \cite{C-D-H-R25}, the Duhamel principle \eqref{B-4} was essentially invoked twice: once in the proof of Lemma \ref{L2.2}, and again in the proof of Lemma \ref{L:approxaut}, while using the boundedness $|\sin|\le 1$. For short time ranges, we may iterate the Duhamel principle \eqref{B-4} indefinitely.
\begin{proposition}\label{prop:banach-contraction}
Fix $N\in \mathbb{Z}_{>0}$, $\kappa,m,t_0>0$, and $\mathcal{V},\Omega^0,\Theta^*(t_0)\in \mathbb{R}^N$, where we denote $\Theta^*(t_0)=(\theta^*_1(t_0),\cdots,\theta^*_N(t_0))$. Define the map $\mathcal{F}=(f_1,\cdots,f_N):(C[0,t_0])^N\to (C[0,t_0])^N$ as follows: given $\Omega^*=(\omega_1^*,\cdots,\omega_N^*)\in (C[0,t_0])^N$ and $t\in [0,t_0]$, we set
\begin{align*}
f_i(\Omega^*)(t) &=  \omega^0_i e^{-t/m} + \nu_i (1 -e^{-t/m}) \\
&+ \frac{\kappa}{Nm}\sum_{l=1}^N \int_0^t e^{-(t-\sigma)/m}\sin\left(\theta^*_l(t_0) -\theta^*_i(t_0)-\int_\sigma^{t_0} (\omega^*_l(\tau)-\omega^*_i(\tau))d\tau\right) d\sigma.
\end{align*}
Then, the following statements \footnote{In this proposition, an asterisk attached to a variable, as in $\Theta^*(t_0)$ or $\Omega^*$, signifies that it is a dummy variable in place of the actual variables $\Theta$ and $\Omega$ arising from a solution to \eqref{B-1}. The motivation is that we want to find solutions $(\Theta,\Omega)$ to \eqref{B-1}, and $\Omega^*$ is an approximation we have at hand to a bona fide $\Omega$.} hold. 
\begin{enumerate}
    \item Given $\Omega^*\in (C([0,t_0]))^N$, there exists a (necessarily unique) solution $(\Theta,\Omega)$ to \eqref{B-1}, for some choice of initial data $\Theta^0$, with $\Theta(t_0)=\Theta^*(t_0)$, $\Omega(0)=\Omega^0$, and $\Omega(t)=\Omega^*(t)$ for $t\in [0,t_0]$, if and only if it is a fixed point of $\mathcal{F}$: $\mathcal{F}(\Omega^*)=\Omega^*$.
    \vspace{0.2cm}
 \item The map $\mathcal{F}:\ell^N_\infty(C[0,t_0])\to \ell^N_\infty(C[0,t_0])$, where $C[0,t_0]$ is given the supremum norm, is $\min\{2\kappa t_0,\frac{\kappa t_0^2}{m}\}$-Lipschitz.
   \vspace{0.2cm}
\item
 If $t_0<\max\left\{\frac{1}{2\kappa},\sqrt{\frac{m}{\kappa}}\right\}$, there exists a unique solution $(\Theta,\Omega)$ to \eqref{B-1} with $\Theta(t_0)=\Theta^*(t_0)$ and $\Omega(0)=\Omega^0$, and we can find it by iterating $\mathcal{F}$: start with any $\Omega^*\in C[0,t_0]^N$ (such as $\Omega^*(t)=\Omega^0$, $t\in [0,t_0]$), define
 \[
 \Omega=\lim_{n\to\infty}\mathcal{F}^n(\Omega^*),
 \]
 and obtain $\Theta$ on $[0,t_0]$ by integration:
\[
 \Theta(t)=\Theta^*(t_0)-\int_t^{t_0}\Omega(\tau)d\tau,\quad t\in [0,t_0].
\]
Set $\Theta^0\coloneqq \Theta(0)$, and now $(\Theta,\Omega)$ is the solution to \eqref{B-1} with initial data $(\Theta^0,\Omega^0)$.
 \end{enumerate}
\end{proposition}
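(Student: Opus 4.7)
The plan is to work through the three parts sequentially, with part (1) providing the dictionary, part (2) providing the analytic estimate, and part (3) assembling these via Banach's theorem.

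For part (1), I would unravel the definition of $\mathcal{F}$ directly using the Duhamel identity \eqref{B-4}. In the forward direction, given a solution $(\Theta,\Omega)$ to \eqref{B-1} with $\Omega(t)=\Omega^*(t)$ and $\Theta(t_0)=\Theta^*(t_0)$, the identity \eqref{B-4} writes $\omega_i(t)$ as $\omega_i^0 e^{-t/m}+\nu_i(1-e^{-t/m})$ plus a convolution of the exponential kernel with the sine of phase differences. Rewriting $\theta_l(\sigma)-\theta_i(\sigma)$ via the fundamental theorem of calculus as $\theta_l^*(t_0)-\theta_i^*(t_0)-\int_\sigma^{t_0}(\omega_l^*(\tau)-\omega_i^*(\tau))\,d\tau$ precisely recovers the identity $\mathcal{F}(\Omega^*)(t)=\Omega^*(t)$. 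The backward direction is a reversal: given $\mathcal{F}(\Omega^*)=\Omega^*$, set $\Omega\coloneqq\Omega^*$, define $\Theta(t)\coloneqq\Theta^*(t_0)-\int_t^{t_0}\Omega^*(\tau)\,d\tau$ (so $\Theta(t_0)=\Theta^*(t_0)$ and $\dot{\Theta}=\Omega$ automatically), set $\Theta^0\coloneqq\Theta(0)$, and then differentiate the fixed point identity in $t$. Differentiating the convolution splits into a boundary term $\frac{\kappa}{Nm}\sum_l\sin(\theta_l(t)-\theta_i(t))$ and a $-1/m$ multiple of the integral itself; combined with the derivatives of $\omega_i^0 e^{-t/m}+\nu_i(1-e^{-t/m})$ and the identity $\omega_i^*(t)=\mathcal{F}(\Omega^*)_i(t)$, the cancellations produce $m\dot\omega_i+\omega_i=\nu_i+\frac{\kappa}{N}\sum_l\sin(\theta_l-\theta_i)$.

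For part (2), I would apply $|\sin a-\sin b|\le|a-b|$ together with the bound
\[
\left|\int_\sigma^{t_0}\bigl((\omega_l^*-\tilde\omega_l^*)-(\omega_i^*-\tilde\omega_i^*)\bigr)\,d\tau\right|\le 2(t_0-\sigma)\|\Omega^*-\tilde\Omega^*\|_\infty.
\]
After the $1/N$ and $\sum_{l=1}^N$ factors collapse, bounding $|f_i(\Omega^*)(t)-f_i(\tilde\Omega^*)(t)|$ reduces to estimating $\frac{2\kappa}{m}\int_0^t e^{-(t-\sigma)/m}(t_0-\sigma)\,d\sigma$. Two complementary estimates deliver the claimed minimum: bounding $(t_0-\sigma)\le t_0$ and using $\int_0^t e^{-(t-\sigma)/m}\,d\sigma\le m$ gives the constant $2\kappa t_0$, while bounding $e^{-(t-\sigma)/m}\le 1$ and integrating the linear factor yields $\frac{\kappa t_0^2}{m}$. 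Taking the smaller of the two gives the Lipschitz constant.

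Part (3) is then the standard Banach contraction argument on the Banach space $\ell^N_\infty(C[0,t_0])$: the hypothesis $t_0<\max\{1/(2\kappa),\sqrt{m/\kappa}\}$ is exactly what forces $\min\{2\kappa t_0,\kappa t_0^2/m\}<1$, and one readily checks that $\mathcal{F}$ sends continuous $\Omega^*$ to continuous functions (the integrand is continuous in $\sigma$, and the parameter-dependence on $t$ passes through dominated convergence). The unique fixed point $\Omega$ produces the desired $(\Theta,\Omega)$ through part (1), with uniqueness of the solution inherited from the Cauchy-Lipschitz theory applied to $(\Theta^0,\Omega^0)$. The main obstacle lies in the backward direction of part (1), where one must carry out the differentiation of the exponential convolution carefully, keeping track of the boundary contribution that regenerates the sine-coupling term and of the $-B_i(t)/m$ term that cancels against $\omega_i^*(t)$ in the combination $m\dot\omega_i^*+\omega_i^*$; the rest of the proof is essentially a mechanical invocation of Banach's theorem once the Lipschitz estimate is in hand.
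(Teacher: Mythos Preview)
Your proposal is correct and follows essentially the same route as the paper. The only noteworthy difference is in part (2): the paper computes $\frac{2\kappa}{m}\int_0^t e^{-(t-\sigma)/m}(t_0-\sigma)\,d\sigma$ exactly, maximizes it over $t\in[0,t_0]$ to obtain $2\kappa\bigl(t_0-m\log(1+t_0/m)\bigr)$, and then invokes the inequality $\log(1+x)\ge\max\{0,x-x^2/2\}$ to extract $\min\{2\kappa t_0,\kappa t_0^2/m\}$, whereas your two crude estimates (bounding either the exponential by $1$ or the linear factor by $t_0$) reach the same constant more directly. This is a mild simplification that loses nothing.
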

\begin{proof}
\noindent (i)~A solution $(\Theta,\Omega)$ to \eqref{B-1} with $\Theta(t_0)=\Theta^*(t_0)$ and $\Omega(t)=\Omega^*(t)$ for $t\in [0,t_0]$ must necessarily be unique, for it must satisfy
\begin{equation}\label{eq:t-from-t-star}
\Theta(t)=\Theta(t_0)-\int_t^{t_0}\Omega(s)ds=\Theta^*(t_0)-\int_t^{t_0}\Omega^*(s)ds,\quad t\in [0,t_0],
\end{equation}
and $\Theta(t)$ for $t>t_0$ would be determined by uniqueness of solutions to \eqref{B-1}.

If $(\Theta,\Omega)$ is a solution to \eqref{B-1} with $\Theta(t_0)=\Theta^*(t_0)$, $\Omega(0)=\Omega^0$, and $\Omega(t)=\Omega^*(t)$ for $t\in [0,t_0]$, then it follows from \eqref{eq:t-from-t-star} and the Duhamel principle \eqref{B-4}  that 
\[ \mathcal{F}(\Omega)(t)=\Omega(t),\quad t\in[0,t_0], \quad \mbox{hence}~~\mathcal{F}(\Omega^*)=\Omega^*. \]

Conversely, suppose $\Omega^*\in (C([0,t_0]))^N$ satisfies $\mathcal{F}(\Omega^*)=\Omega^*$. Define $\Theta_{[0,t_0]}\in C[0,t_0]^N$ using \eqref{eq:t-from-t-star}. Then, we have
\[ \Theta_{[0,t_0]}(t_0)=\Theta^*(t_0) \quad \mbox{and} \quad  \dot\Theta_{[0,t_0]}(t)=\Omega^*(t) \quad \mbox{for $t\in [0,t_0]$}. \]
By definition of $\mathcal{F}$ and the fact that $\mathcal{F}(\Omega^*)=\Omega^*$, we have the Duhamel principle: for $t\in [0,t_0]$,
\[
\omega^*_i(t) =  \omega^0_i e^{-t/m} + \nu_i (1 -e^{-t/m}) + \frac{\kappa}{Nm}\sum_{l=1}^N \int_0^t e^{-(t-\sigma)/m}\sin\left({(\theta_{[0,t_0]})}_l(\sigma) -{(\theta_{[0,t_0]})}_i(\sigma)\right) d\sigma,~~ i\in [N].
\]
Substituting $t=0$ gives $\Omega^*(0)=\Omega^0$. On the other hand, multiplying $me^{t/m}$, differentiating in $t$ and dividing by $e^{t/m}$ on both sides of the above equality, gives
\[
m\dot\omega^*_i(t) e^{t/m}+\omega^*_i(t)=\nu_i+\frac \kappa N\sum_{l=1}^N\sin({(\theta_{[0,t_0]})}_l(t)-{(\theta_{[0,t_0]})}_i(t)),\quad i\in [N],~t\in [0,t_0].
\]
Thus, $\Theta_{[0,t_0]}$ and $\Omega^*$ solve \eqref{B-1} on the time interval $[0,t_0]$. By global existence and uniqueness of solutions to \eqref{B-1}, there is a global solution $(\Theta,\Omega)$ that agrees with $(\Theta_{[0,t_0]},\Omega^*)$ on $[0,t_0]$. This solution satisfies $\Omega(0)=\Omega^0$ and $\Theta(t_0)=\Theta^*(t_0)$.

\vspace{0.2cm}

\noindent (ii)~For $\Omega^*,\tilde\Omega^*\in (C([0,t_0]))^N$, $t\in [0,t_0]$, and $i\in [N]$, we have
\begin{align*}
&|f_i(\Omega^*)(t)-f_i(\tilde\Omega^*)(t)|\\
& \hspace{0.5cm} \le \frac{\kappa}{Nm}\sum_{l=1}^N \int_0^t e^{-(t-\sigma)/m}\left| \int_\sigma^{t_0} (\omega^*_l(\tau)-\omega^*_i(\tau))d\tau-\int_\sigma^{t_0} (\tilde\omega^*_l(\tau)-\tilde\omega^*_i(\tau))d\tau\right| d\sigma\\
&  \hspace{0.5cm}  \le \frac{2\kappa}{m}\int_0^t e^{-(t-\sigma)/m}(t_0-\sigma) d\sigma\cdot \max_{i=1,\cdots,N}\|\omega^*_i-\tilde\omega^*_i\|_{C[0,t_0]}\\
& \hspace{0.5cm}  = 2\kappa\left( t_0-t+m-t_0e^{-t/m}-me^{-t/m}\right)\cdot\|\Omega^*-\tilde\Omega^*\|_{\ell^\infty_N(C[0,t_0])}\\
&  \hspace{0.5cm}  \le 2\kappa\left(t_0-m\log\left(1+\frac{t_0}{m}\right) \right)\cdot \|\Omega^*-\tilde\Omega^*\|_{\ell^\infty_N(C[0,t_0])}\\
& \hspace{0.5cm}  \le \min\left\{2\kappa t_0,\frac{\kappa t_0^2}{m}\right\}\cdot \|\Omega^*-\tilde\Omega^*\|_{\ell^\infty_N(C[0,t_0])},
\end{align*}
where in the penultimate inequality, the maximum is attained at $t=m\log\left(1+\frac{t_0}{m}\right)$, while in the last inequality we used $\log(1+x)\ge \max\left\{0,x-\frac{x^2}{2}\right\}$ for $x\ge 0$.

\vspace{0.2cm}

\noindent (iii)~By statement (1), solutions $(\Theta,\Omega)$ to \eqref{B-1} with $\Theta(t_0)=\Theta^*(t_0)$ and $\Omega(0)=\Omega^0$ are in one-to-one correspondence with fixed points of $\mathcal{F}$. However given the bound on $t_0$, the map $\mathcal{F}$ is a contraction with respect to the $\ell_\infty^N( C([0,t_0]))$ norm, and hence, by the Banach contraction principle, it possesses a unique fixed point, which can be found by iteration, as described in statement (3).
\end{proof}

\begin{remark}
We have not aimed to optimize Proposition \ref{prop:banach-contraction}: perhaps a choice of a better norm, or considering phase differences instead of phases themselves, might lead to a longer time period than $\max\left\{\frac{1}{2\kappa},\sqrt{\frac{m}{\kappa}}\right\}$. However, the time period cannot be taken to be infinitely long when $m\kappa>\frac 14$, as demonstrated in Proposition \ref{prop:determinability} below.
\end{remark}
How much can we improve upon Proposition \ref{prop:banach-contraction}? Qualitatively, we can decide exactly the time threshold until which we can determine $\dot{\Theta}(t)$ from $\Theta(t)$ and $\dot{\Theta}^0$. Namely, for each $m$ and $\kappa$, there exists a time\footnote{The fact that the time $T^*(\kappa,m)$  should be of the form $mT^*(m\kappa)$ can be inferred from a dimensional analysis, or equivalently from the time dilation symmetry \eqref{eq:dilatation}.} $T^*(\kappa,m)=mT^*(m\kappa)$ such that if $t\le T^*(\kappa,m)$, then $\Theta(t)$ and $\dot\Theta^0$ decide $\dot\Theta(t)$, while if $t>T^*(\kappa,m)$ then there are examples where $\dot\Theta(t)$ is not uniquely decided by $\Theta(t)$ and $\dot\Theta(t)$. Note that this is a uniqueness result that does not give an explicit algorithm for determining $\dot\Theta(t)$ from $\Theta(t)$ and $\dot\Theta^0$, such as the Banach contraction principle given in Proposition \ref{prop:banach-contraction}.

\begin{proposition}\label{prop:determinability}
Let $\kappa,m,t^*>0$. Then, the following assertions hold.
\begin{enumerate}
\item Suppose that 
\[ \mbox{either} \quad m\kappa \le \frac 14; \quad \mbox{or} \quad m\kappa >\frac 14 \quad \mbox{and} \quad t^*\le \frac{\pi m}{\sqrt{4m\kappa -1}}+\frac{2 m}{\sqrt{4m\kappa-1}}\sin^{-1}\left(\frac{1}{\sqrt{4m\kappa}}\right), \]
and let $N\in \mathbb{Z}_{>0}$ and $\mathcal{V}\in \mathbb{R}^N$ be arbitrary. Let $\Theta(t)$ and $\Phi(t)$ be two solutions to \eqref{A-1} with initial data $(\Theta^0,\dot{\Theta}^0)$ and $(\Phi^0,\dot{\Phi}^0)$, respectively. If $\dot{\Theta}^0=\dot{\Phi}^0$ and $\Theta(t^*)=\Phi(t^*)$, then we have
\[ \dot{\Theta}(t^*)=\dot{\Phi}(t^*). \]
\item Suppose that
\[ m\kappa>\frac 14 \quad \mbox{and} \quad  t^*> \frac{\pi m}{\sqrt{4m\kappa -1}}+\frac{2 m}{\sqrt{4m\kappa-1}}\sin^{-1}\left(\frac{1}{\sqrt{4m\kappa}}\right), \quad N \geq 2. \]
Then there exist a natural frequency vector ${\mathcal V}$ and initial data $(\Theta^0,\dot{\Theta}^0)$ and  $(\Phi^0,\dot{\Phi}^0)$ such that if $\Theta(t)$ is a solution to \eqref{A-1} with initial data $(\Theta^0,\dot{\Theta}^0)$ and $\Phi(t)$ is a solution to \eqref{A-1} with initial data $(\Phi^0,\dot{\Phi}^0)$, then we have
\[ \dot{\Theta}^0=\dot{\Phi}^0, \quad \Theta(t^*)=\Phi(t^*) \quad \mbox{and} \quad \dot{\Theta}(t^*)\neq \dot{\Phi}(t^*). \]
\end{enumerate}
\end{proposition}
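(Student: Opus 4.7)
The plan is to exploit the fact that the stated threshold, which I denote $T_\kappa$, is precisely the first positive zero of the scalar damped oscillator $m\ddot w + \dot w + \kappa w = 0$ with $\dot w(0) = 0$, $w(0) > 0$ (with the convention $T_\kappa := +\infty$ when $m\kappa \le 1/4$). Part (1) (determinability) will be proved by a vector Sturm--Picone comparison, while part (2) (counterexample) will follow from reducing to the $N=2$ scalar pendulum equation.

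For part (1), set $u := \Theta - \Phi$, so that $\dot u(0) = 0$ and $u(t^*) = 0$. Subtracting the two copies of \eqref{A-1} and applying the mean value theorem to $\sin$ gives
\[
m\ddot u_i + \dot u_i = \frac{\kappa}{N}\sum_{j=1}^N B_{ij}(t)(u_j - u_i), \quad B_{ij}(t) := \int_0^1 \cos\bigl((1-s)(\theta_j - \theta_i) + s(\phi_j - \phi_i)\bigr)\,ds,
\]
where $B_{ij} = B_{ji}$ and $|B_{ij}| \le 1$. A symmetrization shows $u \cdot (m\ddot u + \dot u) = -\frac{\kappa}{2N}\sum_{i,j} B_{ij}(u_j - u_i)^2$, whence $|u \cdot (m\ddot u + \dot u)| \le \kappa\|u\|_2^2$. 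Setting $F := \|u\|_2$ and invoking Cauchy--Schwarz in the form $\|\dot u\|_2^2 \ge \dot F^2$, a short calculation yields the scalar super-solution inequality
\[
m\ddot F + \dot F + \kappa F \ge 0 \qquad \text{on } \{F > 0\}.
\]
Comparing to the reference solution $w$ with $\dot w(0) = 0$, $w(0) = F(0)$ via the Wronskian $W := F\dot w - w\dot F$, one checks $(e^{t/m}W)' \le 0$ and $W(0) = 0$, hence $W \le 0$ and $F \ge w > 0$ on $[0, T_\kappa)$. This contradicts $F(t^*) = 0$ unless $u \equiv 0$, which combined with $\dot u(0) = 0$ forces $\dot u(t^*) = 0$. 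The boundary case $t^* = T_\kappa$ is handled by the strict version of the same inequality chain, using that $|u \cdot (m\ddot u + \dot u)| < \kappa\|u\|_2^2$ at generic times when $u \not\equiv 0$ (since $|B_{ij}| < 1$ unless the relevant phase differences sit precisely in $2\pi\mathbb{Z}$).

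For part (2), take $N = 2$ with $\nu_1 = \nu_2 = 0$, so that $\theta := \theta_1 - \theta_2$ satisfies the pendulum equation $m\ddot\theta + \dot\theta + \kappa\sin\theta = 0$. By the reflection symmetry \eqref{eq:reflection}, if the solution $\theta_{\theta_0}$ with $\theta(0) = \theta_0 > 0$, $\dot\theta(0) = 0$ satisfies $\theta_{\theta_0}(t^*) = 0$, then $-\theta_{\theta_0}$ is a companion solution with the same initial velocity and same value at $t^*$ but opposite (necessarily nonzero, by ODE uniqueness) velocity there; lifting to the two-particle model yields a counterexample pair. It thus suffices to show that the first-zero-time map $T(\theta_0) := \inf\{t > 0 : \theta_{\theta_0}(t) = 0\}$ on $(0, \pi)$ is continuous with image equal to $(T_\kappa, +\infty)$. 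This follows from the limits $T(\theta_0) \to T_\kappa$ as $\theta_0 \to 0^+$ (linearization around $0$ on a compact time window) and $T(\theta_0) \to +\infty$ as $\theta_0 \to \pi^-$ (slow escape from the unstable equilibrium at $\theta = \pi$), smooth dependence of the ODE on initial data, the IVT, and the lower bound $T(\theta_0) > T_\kappa$ furnished by part (1). The main obstacle is the vector Sturm--Picone step in part (1): converting the matrix-valued differential inequality for $u$ into a scalar super-solution inequality for $F$ crucially uses both the $L^2$ norm (so Cauchy--Schwarz yields $\|\dot u\|_2 \ge |\dot F|$) and the symmetry $B_{ij} = B_{ji}$ (making $u \cdot (m\ddot u + \dot u)$ take the sign-controllable form $-\frac{\kappa}{2N}\sum B_{ij}(u_j - u_i)^2$).
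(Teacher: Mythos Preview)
Your proof is correct and follows essentially the same route as the paper: a Sturm--Picone comparison of a scalar $\ell^2$-type functional against the linear oscillator $m\ddot w+\dot w+\kappa w=0$ for part (1), and reduction to the damped pendulum plus the intermediate value theorem on the first-zero-time for part (2). Two small differences and two minor issues are worth noting. In part (1) the paper works with the quotient norm $\mathcal{L}=\bigl(\sum_{i<j}(u_i-u_j)^2\bigr)^{1/2}$ rather than your $F=\|u\|_2$; their choice mods out translation up front, so $\mathcal{L}(0)=0$ only yields $u(0)=c\mathbf{1}$ and they finish via Galilean symmetry plus $u(t^*)=0$, whereas your choice gives $u(0)=0$ directly. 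Your justification for strictness is slightly imprecise: the key point is that $B_{ij}=(\sin a-\sin b)/(a-b)$ satisfies $|B_{ij}|<1$ whenever $u_i\ne u_j$ (i.e.\ $a\ne b$), and combined with $\frac{1}{2N}\sum_{i,j}(u_j-u_i)^2=\|u\|^2-N\bar u^2$ this makes $m\ddot F+\dot F+\kappa F>0$ strict at \emph{every} time with $F>0$, not merely generic ones---so the boundary case $t^*=T_\kappa$ needs no separate treatment. In part (2) you only treat $N=2$, but the statement posits a given $N\ge 2$; the paper covers this by placing the oscillators in two clusters of sizes $N_1+N_2=N$, whose relative phase still obeys the same scalar pendulum equation---a one-line extension of your construction.
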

\begin{remark}Below, we comment on the results of Proposition \ref{prop:determinability}.
\,
\begin{enumerate}
	\item The quantity $\frac 14$ serves as a critical value for $m\kappa$, marking a transition in the behavior of the system \eqref{A-1}. This transition appears in a Sturm-Picone comparison principle, Lemma \ref{lem:simple-sturm-picone}, where $m\kappa-\frac 14$ serves as the discriminant for an associated second-order linear differential equation. This threshold also appears when investigating the cardinality of collisions, as reported in \cite[Theorem 4.1]{C-D-H}\cite[Theorem D.1]{C-D-H-R25}.
\item Proposition \ref{prop:determinability} says that the critical time $T^*(\kappa,m)$ for determinability of $\dot\Theta(t)$ from $\Theta(t)$ and $\dot\Theta^0$ is
\[
T^*(\kappa,m)=
\begin{cases}
	\infty,&m\kappa\le \frac 14,\\
	\frac{\pi m}{\sqrt{4m\kappa -1}}+\frac{2 m}{\sqrt{4m\kappa-1}}\sin^{-1}\left(\frac{1}{\sqrt{4m\kappa}}\right),&m\kappa>\frac 14.
\end{cases}
\]
Of course, since \eqref{B-1} is a time-autonomous system, for $t_0,t^*\ge 0$, one can say that $\Theta(t_0+t^*)$ and $\dot\Theta(t_0)$ determine $\dot\Theta(t_0+t^*)$ if and only if $t^*\le T^*(\kappa,m)$.

\item Proposition \ref{prop:determinability} suggests that any strategy to prove asymptotic phase-locking that involves estimating $\dot{\Theta}$ from $\Theta(t)$ and $\dot{\Theta}^0$ would likely be restricted to the small inertia regime $m\kappa\le \frac 14$; in the large inertia regime $m\kappa > \frac 14$, we would have to devise a new strategy, for example along the lines of {\bf Conjecture \ref{conj:lyapunov}}.

		\item Although the correspondence $\mathbb{R}^N\times\mathbb{R}^N\to\mathbb{R}^N\times\mathbb{R}^N$, $(\Theta^0,\dot{\Theta}^0)\mapsto (\Theta(t),\dot{\Theta}(t))$ is a diffeomorphism, 
		this does not necessarily mean that for a fixed $\dot\Theta^0$ the restricted map $\mathbb{R}^N\to\mathbb{R}^N$, $\Theta^0\mapsto \Theta(t)$ is bijective. Nevertheless, Proposition \ref{prop:determinability} tells us that this restricted map is a diffeomorphism when $m\kappa \le \frac 14$ or $m\kappa >\frac 14$ and $t\le \frac{\pi m}{\sqrt{4m\kappa -1}}+\frac{2 m}{\sqrt{4m\kappa-1}}\sin^{-1}\left(\frac{1}{\sqrt{4m\kappa}}\right)$.
		\vspace{0.1cm}
		\item From statement (1) of Proposition \ref{prop:determinability}, it appears that the intrinsic velocities $\mathcal{V}$ play no role in the determinability of $\dot{\Theta}(t)$ from $\dot{\Theta}^0$ and $\Theta(t)$. However, in this regard, statement (2) of Proposition \ref{prop:determinability} has the weakness that it specifies a $\mathcal{V}$ (namely, $\mathcal{V}=0$). It would be interesting to strengthen statement (2) of Proposition \ref{prop:determinability} to hold for arbitrary $\mathcal{V}$.
		\vspace{0.1cm}
		\item Of course, knowledge of not only $\dot\Theta^0$ and $\Theta(t)$ but also $\Theta^0$ is sufficient to determine $\dot\Theta(t)$. We may ask whether there is an explicit function of $\dot\Theta^0$, $\Theta(t)$, and $\Theta^0$ that better approximates $\dot\Theta(t)$, e.g., does the knowledge of $R^0$, for example, give explicit functions with better error bounds than those of Lemma \ref{L:approxaut}?
		\vspace{0.1cm}
		\item Proposition \ref{prop:determinability} does not tell us how to compute $\dot{\Theta}(t)$ from $\dot{\Theta}^0$ and $\Theta(t)$ even if it is uniquely determined, whereas Proposition \ref{prop:banach-contraction} gives an explicit algorithm in shorter time intervals $\left[0,\max\left\{\frac{1}{2\kappa},\sqrt{\frac{m}{\kappa}}\right\}\right)$. It would be interesting to find an effective algorithm which calculates $\dot{\Theta}(t)$ from $\dot\Theta^0$ and $\Theta(t)$ in the full range of Proposition \ref{prop:determinability} (1), namely
		\[
		m\kappa \le \frac 14 \mathrm{~and ~}t^*>0\quad \mathrm{or}\quad m\kappa >\frac 14 \mathrm{~and~} 0<t^*\le \frac{\pi m}{\sqrt{4m\kappa -1}}+\frac{2 m}{\sqrt{4m\kappa-1}}\sin^{-1}\left(\frac{1}{\sqrt{4m\kappa}}\right),
		\]
		perhaps by finding a better version of the mapping $\mathcal{F}$. If such an algorithm was descriptive enough to yield bounds improving those of Lemma \ref{L:approxaut}, and if the bounds did not worsen for large time in the case $m\kappa\le \frac 14$, it may even give stronger sufficient frameworks in \cite{C-D-H-R25} for asymptotic phase-locking (which already assume $m\kappa\le \frac 14$), since the estimates used in their proofs fundamentally depend  on Lemma \ref{L:approxaut}.
	\end{enumerate}
\end{remark}

In the proof of Proposition \ref{prop:determinability}, we will use the following Sturm--Picone comparison principle.
\begin{lemma}[Sturm--Picone Comparison Principle]\label{lem:simple-sturm-picone}
	Let $a,b,c>0$ be positive real numbers, and define the extended real-valued number
	\[
	T^* :=T^*(a,b,c)=
	\begin{cases}
		\infty,&4ac\le b^2,\\
		\frac{\pi a}{\sqrt{4ac-b^2}}+\frac{2 a}{\sqrt{4ac-b^2}}\sin^{-1}\left(\frac{b}{2\sqrt{ac}}\right),&4ac>b^2.
	\end{cases}
	\]
	Let $I\subset \mathbb{R}$ be a connected interval, and let $y:I\to \mathbb{R}$ be a continuous function such that for any subinterval $J\subset I$ on which $\left.y\right|_J>0$ pointwise, we have that $y$ is $C^2$ on $J$ and
	\[
	a\ddot{y}(t)+b\dot{y}(t)+cy(t)>0,\quad \forall~t\in J.
	\]
	Then if there is a time $t_0\in I$ such that $y(t_0)>0$ and $\dot{y}(t_0)\ge 0$, we have
	\[
	y(t)>0\quad\mathrm{for}~t\in I\cap [t_0,t_0+T^*].
	\]
\end{lemma}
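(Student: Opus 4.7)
The plan is to argue by contradiction, combining a Wronskian-type Sturm--Picone identity with an explicit choice of comparison function. Suppose $y$ vanishes somewhere in $I \cap (t_0, t_0 + T^*]$; by continuity of $y$, let $t_1$ be the smallest such zero, so $y > 0$ on $[t_0, t_1)$, $y(t_1) = 0$, and by hypothesis $y$ is $C^2$ on $[t_0, t_1)$ with $a\ddot y + b\dot y + cy > 0$ strictly there. Introduce the auxiliary function $z$ solving the homogeneous linear equation $a\ddot z + b\dot z + cz = 0$ with $z(t_0) = y(t_0) > 0$ and $\dot z(t_0) = 0$. A direct computation, splitting into the overdamped case $b^2 \ge 4ac$ (where the characteristic roots are real and nonpositive, so $z$ is positive and decaying on all of $[t_0, \infty)$, matching $T^* = \infty$) and the underdamped case $b^2 < 4ac$ (where an explicit sinusoidal calculation pins down the first zero of $z$ at exactly $t_0 + T^*$), shows that $z > 0$ on $[t_0, t_0 + T^*)$, and moreover that $\dot z(t_0 + T^*) < 0$ when $T^* < \infty$, by uniqueness of the trivial solution.

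Next I would run the classical Sturm--Picone computation on the Wronskian $W \coloneqq z\dot y - y\dot z$. Multiplying the strict inequality for $y$ by the positive factor $z$ and subtracting $y \cdot (a\ddot z + b\dot z + cz) = 0$ yields
\begin{equation*}
aW' + bW = z\,(a\ddot y + b\dot y + cy) > 0 \quad \text{strictly on } [t_0, t_1),
\end{equation*}
equivalently $(e^{bt/a}W)'$ is strictly positive there. The initial value is $W(t_0) = z(t_0)\dot y(t_0) = y(t_0)\dot y(t_0) \ge 0$ by the hypothesis on $\dot y(t_0)$. Integrating the strict inequality from any fixed $s \in (t_0, t_1)$ to $T \in (s, t_1)$ gives $e^{bT/a}W(T) \ge e^{bs/a}W(s) > 0$, so the monotone quantity $e^{bt/a}W$ admits a finite or $+\infty$ limit as $t \to t_1^-$, bounded below by $e^{bs/a}W(s) > 0$. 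Hence $\lim_{T \to t_1^-}W(T)$ exists and is strictly positive.

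The concluding step extracts a contradiction by reading this positive limit against the definition of $W$ near $t_1$. Since $y(T) \to 0$ and $\dot z$ is smooth, $y\dot z \to 0$, so $z(T)\dot y(T) \to \lim W > 0$. In the subcase $t_1 < t_0 + T^*$, $z(t_1) > 0$, which forces $\dot y(T) \to (\lim W)/z(t_1) > 0$; integrating the eventually-positive derivative yields $y(T) \le -\tfrac12(\lim W/z(t_1))(t_1 - T) < 0$ for $T$ close to $t_1$, contradicting $y > 0$ on $[t_0, t_1)$. In the boundary subcase $t_1 = t_0 + T^*$, $z$ has a simple zero at $t_1$ (since $\dot z(t_1) < 0$), so $z(T) \lesssim (t_1 - T)$ near $t_1$; the bound $z(T)\dot y(T) \ge \tfrac12\lim W > 0$ then forces $\dot y(T) \gtrsim (t_1 - T)^{-1}$, whose integral diverges logarithmically and sends $y(T_1) - y(T_2) \to +\infty$ as $T_2 \to t_1^-$, contradicting continuity of $y$ at $t_1$.

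The principal obstacle is precisely this boundary subcase $t_1 = t_0 + T^*$, where the comparison $z$ itself vanishes simultaneously with $y$ and a naive ratio argument $y/z \ge 1$ is compatible with $y$ and $z$ both going to $0$. The resolution hinges on the \emph{strictness} of the ODE inequality $a\ddot y + b\dot y + cy > 0$ (as opposed to the weaker $\ge 0$): the strict inequality is what promotes the pointwise positivity of $(e^{bt/a}W)'$ into a strictly positive limit of $W$ at $t_1^-$, which combined with the non-integrable simple zero of $z$ at $t_0 + T^*$ rules out this degenerate configuration. This dependence on strictness also clarifies why the value $T^*$ is sharp: under the non-strict hypothesis, the homogeneous solution $y = z$ itself would be an admissible example vanishing exactly at $t_0 + T^*$.
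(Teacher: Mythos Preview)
The paper does not give a self-contained proof of this lemma; it merely cites Appendix~B of the companion paper \cite{C-D-H-R25}. Your argument is correct and is precisely the classical Sturm--Picone/Wronskian approach one would expect: compare $y$ against the homogeneous solution $z$ with $z(t_0)=y(t_0)$, $\dot z(t_0)=0$, verify that the first zero of $z$ lands exactly at $t_0+T^*$, and use strict monotonicity of $e^{bt/a}W$ to rule out an earlier (or simultaneous) zero of $y$.

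One cosmetic point: your phrasing ``$z(T)\dot y(T)\ge \tfrac12\lim W$'' and ``$\dot y(T)\to(\lim W)/z(t_1)$'' tacitly assumes $\lim_{T\to t_1^-}W(T)$ is finite, which need not be the case a priori. The fix is painless: from $e^{bT/a}W(T)\ge e^{bs/a}W(s)$ for any fixed $s\in(t_0,t_1)$ you get a \emph{uniform} lower bound $W(T)\ge c_0:=W(s)e^{b(s-t_1)/a}>0$ on $(s,t_1)$, and then $z(T)\dot y(T)=W(T)+y(T)\dot z(T)\ge c_0/2$ for $T$ near $t_1$, which is all the subsequent integration argument needs. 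With that tweak both subcases ($t_1<t_0+T^*$ and $t_1=t_0+T^*$) go through exactly as you wrote, and your observation that the \emph{strict} differential inequality is what rescues the boundary subcase is spot on.
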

\begin{proof}
	The proof of Lemma~\ref{lem:simple-sturm-picone} can be found in Appendix B of \cite{C-D-H-R25}, where the authors present and prove a more detailed version of this lemma.
\end{proof}
\begin{proof}[Proof of Proposition \ref{prop:determinability}]
We will prove the first assertion using the Sturm--Picone comparison principle given in Lemma \ref{lem:simple-sturm-picone}. The second assertion follows from constructing an example. \newline

\noindent (i)~Let $\Theta(t)=(\theta_i)_{i\in[N]}$ and $\Phi(t)=(\varphi_i)_{i\in[N]}$ be global solutions to \eqref{A-1}. Then for $i,j\in [N]$,
\begin{align}\label{eq:rel-ODE}
\begin{aligned}
&m(\ddot \theta_i-\ddot \varphi_i-\ddot \theta_j+\ddot \varphi_j)+(\dot\theta_i-\dot\varphi_i-\dot\theta_j+\dot\varphi_j)\\
& \hspace{1cm} =\frac \kappa N \sum_{k=1}^N \left(\sin(\theta_k-\theta_i)-\sin(\varphi_k-\varphi_i)-\sin(\theta_k-\theta_j)+\sin(\varphi_k-\varphi_j)\right).
\end{aligned}
\end{align}
We define the $\ell_2$-mismatch functional\footnote{Of course, the $\ell_2$ norm is taken with respect to the standard basis of $\bigwedge^2\mathbb{R}^N$. We can also use the $\ell_p$-mismatch functional for any $1\le p\le \infty$. If $p$ is an even integer, we get the same result \eqref{eq:2ndODE-L} below. The analysis becomes trickier if $p$ is not an even integer, since the functional may not be differentiable at times whenever $\theta_i-\phi_i-\theta_j+\phi_j=0$ for some distinct indices $i$ and $j$, but by the analyticity of $\Theta(t)$ and $\Phi(t)$, such times are discrete. Perhaps it can be shown that \eqref{eq:2ndODE-L} holds for the $\ell_p$-mismatch functional in a suitable sense with Dini derivatives.}
\begin{equation*}\label{eq:mismatch-functional}
\mathcal{L} = \|(\Theta-\Phi) \wedge {\bf 1}_{[N]}\|_2 = \sqrt{\sum_{i<j}(\theta_i-\varphi_i-\theta_j+\varphi_j)^2}.
\end{equation*}
From the smoothness of $\Theta(t)$ and $\Phi(t)$, it follows that $\mathcal{L}(t)$ is smooth at times $t$ when $\mathcal{L}(t)\neq 0$. Whenever $\mathcal{L}(t)\neq 0$, we may find the orbital derivatives of $\mathcal{L}$ by squaring and differentiating:
\begin{align}
\mathcal{L}^2&= \sum_{i<j}(\theta_i-\varphi_i-\theta_j+\varphi_j)^2,\nonumber \\ 
\mathcal{L}\dot{\mathcal{L}}&= \sum_{i<j}(\theta_i-\varphi_i-\theta_j+\varphi_j)(\dot\theta_i-\dot\varphi_i-\dot\theta_j+\dot\varphi_j),\label{eq:first-derivative} \\
\mathcal{L}\ddot{\mathcal{L}}&= \sum_{i<j}(\theta_i-\varphi_i-\theta_j+\varphi_j)(\ddot\theta_i-\ddot\varphi_i-\ddot\theta_j+\ddot\varphi_j)+\underbrace{\sum_{i<j}(\dot\theta_i-\dot\varphi_i-\dot\theta_j+\dot\varphi_j)^2-\dot{\mathcal{L}}^2}_{\ge 0~(\because \eqref{eq:first-derivative}\land \mathrm{Cauchy-Schwarz})},\label{eq:second-derivative}
\end{align}
and so
\begin{align}\label{eq:second-diff-calc}
\begin{aligned}
&\mathcal{L} \left(m\ddot{ \mathcal{L}}+\dot{\mathcal{L}}+\kappa\mathcal{L}\right) =m\mathcal{L}\ddot{\mathcal{L}}+\mathcal{L}\dot{\mathcal{L}}+\kappa \mathcal{L}^2\\
& \hspace{1cm} \stackrel{\mathclap{\eqref{eq:second-derivative}}}{\geq}~\sum_{i<j}(\theta_i-\varphi_i-\theta_j+\varphi_j)\left(m(\ddot \theta_i-\ddot \varphi_i-\ddot \theta_j+\ddot \varphi_j)+(\dot\theta_i-\dot\varphi_i-\dot\theta_j+\dot\varphi_j)\right)+\kappa \mathcal{L}^2\\
&  \hspace{1cm} \stackrel{\mathclap{\eqref{eq:rel-ODE}}}{=}~\sum_{i<j}(\theta_i-\varphi_i-\theta_j+\varphi_j)\cdot \frac \kappa N \sum_{k=1}^N \Big(\sin(\theta_k-\theta_i)-\sin(\varphi_k-\varphi_i)-\sin(\theta_k-\theta_j)+\sin(\varphi_k-\varphi_j) \Big) \\
& \hspace{1cm} +\kappa \mathcal{L}^2\\
& \hspace{1cm} =\frac \kappa N\sum_{i,j,k}(\theta_i-\varphi_i-\theta_j+\varphi_j) \left(\sin(\theta_k-\theta_i)-\sin(\varphi_k-\varphi_i)\right)+\kappa \mathcal{L}^2 \\
& \hspace{1cm} =: {\mathcal I}_{1} +  \kappa \mathcal{L}^2.
\end{aligned}
\end{align}
Next, we claim that 
\begin{equation} \label{NN-5}
{\mathcal I}_{1} \geq -\kappa\mathcal{L}^2. 
\end{equation}
{\it Proof of \eqref{NN-5}}:  By direct calculation, one has  
\begin{align*}
 {\mathcal I}_{1} &=  \frac \kappa N\sum_{i,j,k}(\theta_i-\varphi_i-\theta_j+\varphi_j) \left(\sin(\theta_k-\theta_i)-\sin(\varphi_k-\varphi_i)\right)\\
    &=\frac \kappa N\sum_{i,j,k}(\theta_k-\varphi_k-\theta_j+\varphi_j) \left(\sin(\theta_i-\theta_k)-\sin(\varphi_i-\varphi_k)\right)\quad (\because \mathrm{switch~}i\mathrm{~and~}k)\\
    &=-\frac \kappa N\sum_{i,j,k}(\theta_k-\varphi_k-\theta_j+\varphi_j) \left(\sin(\theta_k-\theta_i)-\sin(\varphi_k-\varphi_i)\right) \\
    &=\frac \kappa {2N}\sum_{i,j,k}(\theta_i-\varphi_i-\theta_k+\varphi_k) \left(\sin(\theta_k-\theta_i)-\sin(\varphi_k-\varphi_i)\right)\\
    &=\kappa \sum_{i<k}(\theta_i-\varphi_i-\theta_k+\varphi_k) \left(\sin(\theta_k-\theta_i)-\sin(\varphi_k-\varphi_i)\right)\\
    &\ge -\kappa \left(\sum_{i<k}(\theta_i-\varphi_i-\theta_k+\varphi_k)^2\right)^{1/2} \left(\sum_{i<k}\left(\sin(\theta_k-\theta_i)-\sin(\varphi_k-\varphi_i)\right)^2\right)^{1/2}\\
    &\ge -\kappa \mathcal{L} \left(\sum_{i<k}\left(\theta_k-\theta_i-\varphi_k+\varphi_i\right)^2\right)^{1/2}=-\kappa\mathcal{L}^2,
\end{align*}
where we used the average of the first and third lines in the fourth line.  Equality in the final inequality is impossible unless $\mathcal{L}=0$. \newline 

\noindent By \eqref{eq:second-diff-calc} and \eqref{NN-5} on an interval where $\mathcal{L}>0$, we have
\begin{align}\label{eq:2ndODE-L}
m\ddot{ \mathcal{L}}+\dot{\mathcal{L}}+\kappa\mathcal{L}>0.
\end{align}
Thus, the hypothesis of Lemma \ref{lem:simple-sturm-picone} is satisfied with the following setting:
\[ a=m, \quad b=1, \quad c=\kappa, \quad  I=[0,\infty), \quad \mbox{and} \quad y=\mathcal{L}. \]
Since $\Theta(t^*)=\Phi(t^*)$, we have 
\begin{equation} \label{NN-6}
\mathcal{L}(t^*)=0. 
\end{equation}
Next, we claim that 
\[ \mathcal{L}(0)=0. \]
If $\mathcal{L}(0)>0$, then, it follows from $\dot{\Theta}^0=\dot{\Phi}^0$ and \eqref{eq:first-derivative} that $\dot{\mathcal{L}}(0)=0$.
But then, it follows from Lemma \ref{lem:simple-sturm-picone} that $\mathcal{L}(t)>0$ for $t\in [0,\infty)\cap[0,T^*]$, with 
\[
T^*=T^*(a,b,c)=T^*(m,1,\kappa)=
\begin{cases}
    \infty,&m\kappa\le \frac 14,\\
\frac{\pi m}{\sqrt{4m\kappa -1}}+\frac{2 m}{\sqrt{4m\kappa-1}}\sin^{-1}\left(\frac{1}{\sqrt{4m\kappa}}\right), &m\kappa> \frac 14.
\end{cases}
\]
By assumption, $t^*\le T^*$, so that $\mathcal{L}(t^*)>0$ which contradicts to \eqref{NN-6}. Therefore, we have $\mathcal{L}(0)=0$ which means that 
\[ \theta_i^0-\varphi_i^0=\theta_j^0-\varphi_j^0 \quad \mbox{for all $i,j\in [N]$}, \]
i.e. there is a constant $\alpha\in \mathbb{R}$ such that $\theta_i^0=\varphi_i^0+\alpha$ for $i\in [N]$. But we also know $\dot{\Theta}^0=\dot{\Phi}^0$. By the Galilean symmetry \eqref{eq:galilean}, it follows that $\theta_i(t)=\varphi_i(t)+\alpha$ for $i\in [N]$ and $t\ge 0$. From $\Theta(t^*)=\Phi(t^*)$, we have $\alpha=0$, thus $\Theta(t)=\Phi(t)$ for all $t\ge 0$, and, a fortiori, $\dot{\Theta}(t^*)=\dot{\Phi}(t^*)$.

\vspace{0.2cm}

\noindent (ii)~Let $N\ge 2$, $\mathcal{V}=0$, and $\eta\in (0,\pi )$. We decompose $N=N_1+N_2$ where $N_1,N_2\ge 1$, and consider the distinct initial data:
\begin{align*}
\begin{aligned}
& \theta_1^0=\cdots=\theta_{N_1}^0=\eta N_2/N,\quad \theta_{N_1+1}^0=\cdots=\theta_N^0=-\eta N_1/N, \quad \varphi_i^0=-\theta_i^0,\quad i \in [N], \\
& \dot{\theta}_i^0=\dot{\varphi}_i^0=0,\quad i \in [N].
\end{aligned}
\end{align*}
 Now, we consider the solutions $(\Theta,\dot{\Theta})$ and $(\Phi,\dot{\Phi})$ to \eqref{A-1} with initial conditions $\Theta(0)=(\theta_1^0,\cdots,\theta_N^0)$ and $\dot\Theta(0)=(\dot\theta_1^0,\cdots,\dot\theta_N^0)$, and $\Phi(0)=(\varphi_1^0,\cdots,\varphi_N^0)$ and $\dot\Phi(0)=(\dot\varphi_1^0,\cdots,\dot\varphi_N^0)$, respectively. By the particle exchange symmetry \eqref{eq:particle-exchange}, we have 
 \[ \theta_1(t)=\cdots=\theta_{N_1}(t) \quad \mbox{and} \quad \theta_{N_1+1}(t)=\cdots=\theta_N(t) \quad \mbox{for all $t\ge 0$}. \]
 On the other hand, by \eqref{eq:galconserve}, we have
 \[ \frac 1N\sum_{i=1}^N \theta_i(t)=0 \quad \mbox{for all $t \ge 0$}. \]
Therefore, denoting $\theta_{\mathrm{rel}}(t)\coloneqq \theta_1(t)-\theta_N(t)$, $t\ge 0$, we have
\[
\theta_1(t)=\cdots=\theta_{N_1}(t)=\frac{N_2}{N}\theta_{\mathrm{rel}}(t),~\theta_{N_1+1}(t)=\cdots=\theta_{N}(t)=-\frac{N_1}{N}\theta_{\mathrm{rel}}(t),\quad t\ge 0.
\]
On the other hand, by the reflection symmetry \eqref{eq:reflection}, we have
\[
\varphi_i(t)=-\theta_i(t),\quad i \in [N],~t\ge 0.
\]
Therefore, the dynamics is one-dimensional, depending on the single variable $\theta_{\mathrm{rel}}(t)$. Its dynamics is governed by the following Cauchy problem to the second-order ODE:
\[
\begin{cases}
\displaystyle m\ddot \theta_{\mathrm{rel}}+\dot \theta_{\mathrm{rel}}=-\kappa  \sin\left(\theta_{\mathrm{rel}}\right),\quad t>0, \\
\displaystyle \theta_{\mathrm{rel}}(0)=\eta, \quad \dot\theta_{\mathrm{rel}}(0)=0.
\end{cases}
\]
In other words, $\theta_{\mathrm{rel}}$ corresponds to the phase of a damped circular pendulum under a uniform gravitational field. As $\eta\to 0$, $\theta_{\mathrm{rel}}/\eta$ approaches the solution $\theta$ to the second-order linear ordinary differential equation
\[
m\ddot \theta+ \dot\theta =-\kappa \theta,\quad \theta(0)=1,\quad \dot\theta(0)=0.
\]
As $m\kappa>\frac 14$, the solution is
\[
\theta(t)=e^{-t/2m}\left(\cos\left(\frac{\sqrt{4m\kappa -1}}{2m}t\right)+\frac{1}{\sqrt{4m\kappa -1}}\sin\left(\frac{\sqrt{4m\kappa -1}}{2m}t\right)\right)
\]
and has its first zero on $[0,\infty)$ at $t_1=\frac{\pi m}{\sqrt{4m\kappa -1}}+\frac{2 m}{\sqrt{4m\kappa-1}}\sin^{-1}\left(\frac{1}{\sqrt{4m\kappa}}\right)$.

It is well-known from classical mechanics that $\theta_{\mathrm{rel}}$ also has a smallest positive zero $t_1^\eta$, which is simple and which approaches the zero $t_1$ as $\eta\to 0$ and which grows to infinity as $\eta\to\frac{\pi}2$ (we always have $t_1^\eta>t_1$ by the Sturm--Picone comparison principle, Lemma \ref{lem:simple-sturm-picone}). Thus, given $t^*>t_1$, by the intermediate value theorem, there exists $\eta\in (0,\frac \pi 2)$ such that $t^*=t_1^\eta$. Then $\theta_{\mathrm{rel}}(t^*)=0$ and $\dot\theta_{\mathrm{rel}}(t^*)<0$, so
\[
\Theta(t^*)=\Phi(t^*)=0,
\]
but we have
\[
\dot\Theta(t^*) =\dot\theta_{\mathrm{rel}}(t^*)\Big(\underbrace{\frac{N_2}{N},\cdots,\frac{N_2}{N}}_{N_1\mathrm{~times}},\underbrace{-\frac{N_1}{N},\cdots,-\frac{N_1}{N}}_{N_2\mathrm{~times}}\Big) \neq \dot\theta_{\mathrm{rel}}(t^*)\Big(-\underbrace{\frac{N_2}{N},\cdots,-\frac{N_2}{N}}_{N_1\mathrm{~times}},\underbrace{\frac{N_1}{N},\cdots,\frac{N_1}{N}}_{N_2\mathrm{~times}}\Big)=\dot\Phi(t^*).
\]
\end{proof}

%
%
%
%
\section{Concluding remarks} \label{sec:conclusion}
\setcounter{equation}{0}
In this paper, we have revisited the previous work \cite{C-D-H-R25} and investigated both the rationale and the limitations of the philosophy employed therein, which was to view system \eqref{A-1} as a perturbation of the first-order system \eqref{A-2}. There are two senses in which \eqref{A-1} can be viewed as a perturbation of \eqref{A-2}. The first sense is as a perturbation in the Fr\'echet function space $C[0,\infty)\cap C^\infty(0,\infty)$, for which we develop a Tikhonov theory both quantitatively and qualitatively, namely Propositions \ref{prop:mto0} and \ref{prop:mto0quant}, respectively. The second sense is in the determinability of the first derivative $\dot\Theta(t)$ from the phase position $\Theta(t)$ and the initial derivative $\dot\Theta^0$, for which we develop a sharp temporal threshold for determinability in Proposition \ref{prop:determinability}, as well as an explicit Banach contraction principle in Proposition \ref{prop:banach-contraction}.
 
The following are some possible directions for future research.

\subsection{Lyapunov functional approach}
Improvements in the perturbative estimations may enhance the results of \cite{C-D-H-R25} in the small inertia regime $m\kappa\le \frac 14$. However, our results in this paper reveal a fundamental limitation in both the two viewpoints on considering \eqref{A-1} as a perturbation of \eqref{A-2}, namely that they are applicable only in the small inertia regime. In order to fully resolve the complete synchronization problem for the inertial Kuramoto model \eqref{A-1} in all inertial regimes, namely to affirmatively answer Question \ref{ques:suff-coupling} for large inertia, or, more ambitiously, to affirmatively resolve the inertia-oblivious Conjectures \ref{conj:coincide} and \ref{conj:R}, one would need to abandon the perturbative approach and use inertia-independent strategies, most promisingly through the following Lyapunov functional existence conjecture:
\begin{conjecture}\label{conj:lyapunov}\,
	\begin{enumerate}
		\item (Weak form) There is a constant $c\ge \frac 12$ such that if $\kappa>c\mathcal{D}(\mathcal{V})$, then \eqref{A-1} and \eqref{A-2} admit weak Lyapunov functionals.
		\vspace{0.1cm}
		\item (Strong form) Denoting the critical coupling strength\footnote{This is the coupling strength above which phase-locked states exist. It is given as a function of $\mathcal{V}$ \cite{V-M}.} as $\kappa_c(\mathcal{V})$, if $\kappa>\kappa_c(\mathcal{V})$, then \eqref{A-1} and \eqref{A-2} admit weak Lyapunov functionals.
	\end{enumerate}
\end{conjecture}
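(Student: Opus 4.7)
The natural candidates emerge from the gradient-flow and Hamiltonian structures. In the comoving frame with $\nu_c = \frac{1}{N}\sum_i \nu_i$, set $\tilde\theta_i = \theta_i - \nu_c t$ (using the appropriate variant of \eqref{eq:galilean} for \eqref{A-1}) and consider the potential
\[
V(\tilde\Theta) = -\sum_{i=1}^N (\nu_i - \nu_c)\tilde\theta_i - \frac{\kappa}{2N}\sum_{i,j=1}^N \cos(\tilde\theta_j - \tilde\theta_i).
\]
Then \eqref{A-2} becomes the gradient flow $\dot{\tilde\Theta} = -\nabla V$, so $\dot V = -\|\dot{\tilde\Theta}\|_2^2 \le 0$. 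For \eqref{A-1}, the total energy
\[
E(\tilde\Theta,\dot{\tilde\Theta}) = \frac{m}{2}\|\dot{\tilde\Theta}\|_2^2 + V(\tilde\Theta)
\]
satisfies $\dot E = -\|\dot{\tilde\Theta}\|_2^2 \le 0$ by the standard energy identity. Dissipativity is thus automatic; the content of the conjecture is that $V$ and $E$ descend to genuine \emph{weak} (non-increasing, LaSalle-type) Lyapunov functionals on a reduced phase space, bounded below with compact sublevel sets, so that asymptotic phase-locking follows by LaSalle's invariance principle rather than by perturbative comparison to \eqref{A-2}.

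The plan is to quotient by Galilean symmetry to $\mathbb{R}^N/\mathbb{R}\mathbf{1}_{[N]}$, where the coupling part of $V$ is automatically bounded. Since $\sum_i(\nu_i-\nu_c) = 0$, the drift term $-\sum_i(\nu_i-\nu_c)\tilde\theta_i$ descends to the quotient but can still diverge linearly along unbounded rays, so the key step is to show that under $\kappa > c\mathcal{D}(\mathcal{V})$, trajectories remain in a compact subset of the quotient on which $V$ is bounded below. For \eqref{A-2} with $c = 1$, a classical Chopra-Spong arc-invariance argument confines trajectories to a half-circle; refining toward $c = \tfrac12$ would require harnessing the order-parameter techniques of \cite{H-K-R,H-R} to accommodate bipolar configurations. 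For \eqref{A-1}, one additionally needs to control $\|\dot{\tilde\Theta}\|_2$; a promising device is a tilted energy $E + \lambda\langle\tilde\Theta - \tilde\Theta^*,\dot{\tilde\Theta}\rangle$ with fiducial equilibrium $\tilde\Theta^*$ (whose existence is guaranteed by Verwoerd--Mason when $\kappa > \kappa_c(\mathcal{V})$), tuned so that the modification is positive-definite in a neighborhood of $\tilde\Theta^*$ yet still non-increasing globally for suitable $\lambda = \lambda(m,\kappa)$. LaSalle's invariance principle would then identify the $\omega$-limit set with the phase-locked equilibria on $\{\dot{\tilde\Theta} = 0\}$, yielding both forms of the conjecture.

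The main obstacle is an inherent circularity: confining trajectories to a compact region of the quotient configuration space is morally equivalent to the phase-locking conclusion itself. Breaking this requires an independent a priori confinement estimate—classical for \eqref{A-2} under half-circle invariance, but genuinely delicate for \eqref{A-1} when $m\kappa \gg 1$, since the inertial momentum can sweep trajectories across any candidate invariant arc before dissipation can take effect. Pushing $c$ toward $\tfrac12$ additionally requires handling configurations with phase diameter exceeding $\pi$, precisely where the perturbative viewpoint of this paper and \cite{C-D-H-R25}—including the Tikhonov estimates of Theorem \ref{thm:tikhonov} and the Banach contraction of Proposition \ref{prop:banach-contraction}—collapses due to the determinability threshold $m\kappa = \tfrac14$ of Theorem \ref{thm:determinability}. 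A successful attack in the large-inertia regime will likely need genuinely new dissipative structures, such as a convex functional of an empirical-measure surrogate or a spectral/bifurcation argument along the lines of Chiba-Medvedev \cite{C-M22,C-M-M23}, rather than the classical mechanical energy.
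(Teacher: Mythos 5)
The statement you are addressing is Conjecture~\ref{conj:lyapunov}, which the paper explicitly leaves \emph{open}: the concluding remarks state that ``Constructing a Lyapunov functional for systems \eqref{A-1} and \eqref{A-2} remains a challenging open problem which we leave to future investigation.'' There is therefore no proof in the paper to compare against, and your submission is not a proof either---it is a research plan, which you are candid about. The computations you do present are correct: in the comoving frame $\tilde\theta_i = \theta_i - \nu_c t$ one indeed has $\dot{\tilde\Theta} = -\nabla V$ for \eqref{A-2} (so $\dot V = -\|\dot{\tilde\Theta}\|_2^2$), and $m\ddot{\tilde\Theta} + \dot{\tilde\Theta} = -\nabla V$ for \eqref{A-1} (so $\dot E = -\|\dot{\tilde\Theta}\|_2^2$). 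You also correctly note that both $V$ and $E$ descend to $\mathbb{R}^N/\mathbb{R}\mathbf{1}_{[N]}$ because $\sum_i(\nu_i-\nu_c)=0$, yet $V$ remains unbounded below along rays in the quotient whenever $\mathcal{V}$ is nonconstant, since the coupling term is bounded by $\kappa N/2$ while the drift term grows linearly.

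The genuine gap, which you name but do not close, is the a priori confinement estimate. Dissipativity of $V$ and $E$ is, as you say, classical and essentially tautological (it is essentially the content of \cite{Van93}); the conjecture is asking for something stronger, namely a functional whose structure alone (boundedness below, compact sublevel sets, or a comparable coercivity) forces convergence without presupposing confinement to a half-circle or to a neighborhood of a phase-locked state. Your tilted-energy device $E + \lambda\langle\tilde\Theta-\tilde\Theta^*,\dot{\tilde\Theta}\rangle$ is a standard strict-Lyapunov construction and is sound \emph{locally} near a fiducial equilibrium $\tilde\Theta^*$, but this cannot by itself yield the global, generic-initial-data conclusion that the conjecture is aimed at supporting (cf.\ Conjectures~\ref{conj:coincide} and \ref{conj:R}), and it offers no leverage when $m\kappa$ is large and the momentum can carry trajectories past any candidate trapping region. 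In short, your writeup is a correct and reasonable survey of why the obvious candidates are inadequate, and it identifies the right obstruction, but it does not resolve the conjecture---which is consistent with the paper's own stance that this remains open.
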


 Constructing a Lyapunov functional for systems \eqref{A-1} and \eqref{A-2} remains a challenging open problem which we leave to future investigation.

\subsection{Analogous results for the Winfree model}
Another avenue for future investigation would be to establish analogous results for the Winfree model, another prototypical synchronization model mentioned in the introduction. The Winfree model is given as the following Cauchy problem
\begin{align}\label{Wi-1}
	\begin{cases}
		\displaystyle \dot\theta_i = \nu_i - \frac{\kappa}{N}\sin\theta_i\sum_{j=1}^N (1+\cos\theta_j),\quad t > 0,\\
		\displaystyle \theta_i\Big|_{t = 0+} = \theta_i^0,\quad i\in [N],
	\end{cases}
\end{align}
and a natural \emph{inertial Winfree model} would be given as
\begin{equation}
	\begin{cases} \label{Wi-2}
		\displaystyle m \ddot\theta_i + \dot\theta_i = \nu_i - \frac{\kappa}{N}\sin\theta_i\sum_{j=1}^N (1+\cos\theta_j),\quad t > 0,\\
		\displaystyle (\theta_i, {\dot \theta}_i) \Big|_{t = 0+} = (\theta_i^0, \omega_i^0), \quad i\in [N].
	\end{cases}
\end{equation}

\begin{question}
	What are the analogues of Theorem \ref{thm:qualitative_mto0} and Propositions \ref{prop:banach-contraction}, \ref{prop:determinability}, \ref{prop:mto0}, and \ref{prop:mto0quant} for the inertial Winfree model \eqref{Wi-2}?
\end{question}
We remark that the strategy of \cite{H-R} was successfully implemented in \cite{Ry} so as to obtain the following theorem. For the Winfree model, the order parameter is defined as
\[
R=\frac 1N\sum_{j=1}^N(1+\cos\theta_j).
\]

\begin{theorem}[{\cite{Ry}}]
	Fix parameters $\mathcal{V}\in \mathbb{R}^N$ and $\kappa>0$ such that
	\[
	\kappa>2\|\mathcal{V}\|_\infty.
	\]
	Then for Lebesgue almost every initial data $\{\theta_i^0\}_{i=1}^N$, the solution $\{\theta_i(t)\}_{i=1}^N$ to \eqref{Wi-1} satisfies the following properties.
	\begin{enumerate}
		\item (Oscillator death) For all $i=1,\cdots,N$, the limit $\lim_{t\to\infty}\theta_i(t)$ exists and $\lim_{t\to\infty}\dot{\theta}_i(t)=0$.
		\item (Uniform lower bound on the order parameter) We have $R_\infty\coloneqq \lim_{t\to\infty}R(t)\ge \sqrt{\frac 12}$.
	\end{enumerate}
\end{theorem}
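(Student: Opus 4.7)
The plan is to view \eqref{Wi-1} as a real-analytic gradient flow and to adapt the Ha--Ryoo strategy of phase confinement followed by convergence to equilibria. The key observation is that \eqref{Wi-1} is a gradient flow on $\mathbb{R}^N$: setting
\[
\Phi(\Theta) := -\frac{\kappa N R^2}{2} - \sum_{j=1}^N \nu_j\theta_j, \qquad R = \frac{1}{N}\sum_{j=1}^N (1+\cos\theta_j),
\]
one checks directly that $\dot\theta_i = -\partial_{\theta_i}\Phi(\Theta)$, so $\dot\Phi = -\|\dot\Theta\|_2^2 \le 0$. This places us in the framework of real-analytic gradient flows, where the \L{}ojasiewicz gradient inequality can promote subsequential convergence to pointwise convergence of $\Theta(t)$, provided $\Phi$ remains bounded below along the trajectory.

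To prevent the unbounded linear term $-\sum_j \nu_j \theta_j$ from driving $\Phi$ to $-\infty$, I would first show that $R(t)$ is bounded away from zero for Lebesgue almost every initial datum. Direct computation yields
\[
\dot R = -\frac{1}{N}\sum_{j=1}^N \nu_j\sin\theta_j + \kappa R\, S, \qquad S(t) := \frac{1}{N}\sum_{j=1}^N \sin^2\theta_j,
\]
and the Cauchy--Schwarz inequality gives $\dot R \ge \sqrt{S}\bigl(\kappa R\sqrt{S} - \|\mathcal{V}\|_\infty\bigr)$. The assumption $\kappa > 2\|\mathcal{V}\|_\infty$, together with the exclusion of the antipodal unstable configuration $\theta_j \equiv \pi \pmod{2\pi}$ (whose stable manifold is lower-dimensional by hyperbolicity), should ensure $\liminf_{t\to\infty}\kappa R(t) > \|\mathcal{V}\|_\infty$. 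Once this lower bound is secured, the single-oscillator equation $\dot\theta_i = \nu_i - \kappa R(t)\sin\theta_i$ possesses a hyperbolic sink in each $2\pi$-period, and a trapping argument in the spirit of Theorem \ref{L4.4} would confine $\theta_i(t)$ to a bounded interval in $\mathbb{R}$.

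With the $\theta_i(t)$ bounded, $\Phi$ is bounded below, so LaSalle invariance combined with the \L{}ojasiewicz inequality forces $\theta_i(t)\to\theta_i^\infty$ and $\dot\theta_i(t)\to 0$, giving oscillator death. For the order-parameter bound, the stable manifold theorem would imply that, for Lebesgue almost every initial datum, $\Theta^\infty$ is a linearly stable equilibrium; in particular $\cos\theta_i^\infty > 0$ for every $i$. The self-consistency $\sin\theta_i^\infty = \nu_i/(\kappa R_\infty)$ then reads
\[
R_\infty = 1 + \frac{1}{N}\sum_{j=1}^N \sqrt{1 - \bigl(\nu_j/(\kappa R_\infty)\bigr)^2} \;\ge\; 1 + \sqrt{1 - \bigl(\|\mathcal{V}\|_\infty/(\kappa R_\infty)\bigr)^2},
\]
and iterating this implicit inequality under $\kappa > 2\|\mathcal{V}\|_\infty$ produces the claimed bound $R_\infty \ge 1/\sqrt{2}$.

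The main obstacle is the confinement step: ruling out unbounded drift of $\theta_i(t)$ on $\mathbb{R}$ despite the unbounded linear term in $\Phi$. I expect this to require a delicate ``capture'' argument based on the eventual lower bound $\kappa R(t) > \|\mathcal{V}\|_\infty$ together with careful tracking of the oscillator's passage through the unstable zone $\cos\theta_i < 0$, analogous to the phase-diameter estimates in \cite{H-R,H-K-R}. A secondary difficulty is ensuring that the a.e.~initial-data argument used to avoid the $R\to 0$ scenario and the unstable equilibria can be done in a single Sard-type step compatible with the non-properness of $\Phi$ on $\mathbb{R}^N$.
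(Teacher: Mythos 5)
The paper does not prove this theorem itself; it is quoted from the forthcoming work \cite{Ry}, so there is no in-paper argument against which to compare yours. Assessing your proposal on its own merits: the identification of \eqref{Wi-1} as the gradient flow of $\Phi(\Theta)=-\tfrac{\kappa N R^2}{2}-\sum_j\nu_j\theta_j$ is correct (indeed $-\partial_{\theta_i}\Phi=\nu_i-\kappa R\sin\theta_i$), as is the Cauchy--Schwarz estimate $\dot R\ge\sqrt S\left(\kappa R\sqrt S-\|\mathcal V\|_\infty\right)$.

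Two issues merit attention. First, the concluding order-parameter arithmetic is confused. A local minimum of $\Phi$ has Hessian $H=\kappa R_\infty\operatorname{diag}(\cos\theta^\infty_i)-\tfrac{\kappa}{N}\,ss^{\mathsf T}$ with $s_i=\sin\theta^\infty_i$, and testing with $v=e_i$ shows positive semidefiniteness forces $\kappa R_\infty\cos\theta^\infty_i\ge\tfrac{\kappa}{N}\sin^2\theta^\infty_i$, hence $\cos\theta^\infty_i>0$ for every $i$. Since $R_\infty=1+\tfrac1N\sum_i\cos\theta^\infty_i$, this immediately gives $R_\infty>1$, which already exceeds $\sqrt{1/2}$; no ``iteration'' of the implicit inequality is needed, and your final display in fact yields the even stronger $R_\infty>1+\sqrt{3}/2$ once $\|\mathcal V\|_\infty/\kappa<1/2$ is used. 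The chain of reasoning is not wrong, but it obscures the much simpler and sharper conclusion.

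Second, and more seriously, the two steps you flag as ``obstacles'' are genuinely the crux and are not filled in. The estimate $\dot R\ge\sqrt S\left(\kappa R\sqrt S-\|\mathcal V\|_\infty\right)$ degenerates precisely where $S$ is small, i.e.\ near $\theta_j\in\pi\mathbb Z$, which includes the dangerous configuration $\theta_j\approx\pi$ where $R\approx 0$; you need a separate quantitative argument showing that the drift $\dot\theta_i\approx\nu_i$ expels the trajectory from this region and that $R(t)$ eventually stays above $\|\mathcal V\|_\infty/\kappa$, before the single-oscillator trapping argument can even be invoked. Likewise, invoking the stable manifold theorem to conclude that a.e.\ trajectory lands on a linearly stable equilibrium presupposes hyperbolicity of the non-minimal critical points, which is not automatic here; the real-analytic setting does give a version of this (center-stable manifolds of non-minima have measure zero for real-analytic gradient systems), but that requires citation or argument, not a hand-wave. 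As written, the proposal is a plausible blueprint with correct structural observations, but it is not yet a proof.
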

It seems promising to investigate whether similar results hold for the inertial Winfree model \eqref{Wi-2}.

\bigskip

\appendix
%
%
%
%
\section{An integral Gr\"onwall inequality}\label{app:sturm-picone}
\setcounter{equation}{0} 
We state and prove the non-negativity of function satisfying the integral form of  Gr\"onwall-type inequality as follows.
\begin{lemma}\label{lem:2nd-Gronwall}
    Let $a,c>0$, and suppose the continuous function $u:[0,\infty)\to\mathbb{R}$ satisfies
    \begin{equation}\label{eq:2nd-Gronwall}
        u(t)\le c\int_0^t u(s)(1-e^{-(t-s)/a})ds,\quad t\ge 0.
    \end{equation}
    Then, we have
   \[  u(t)\le 0 \quad \mbox{for all $t\ge 0$.} \]
\end{lemma}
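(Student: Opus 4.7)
\textbf{Proof plan for Lemma \ref{lem:2nd-Gronwall}.}
The plan is to convert the integral inequality into a second-order differential inequality and then factor the associated operator into two first-order pieces, each of which can be handled by an elementary Gr\"onwall-type argument.

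First I would introduce the auxiliary function
\[
v(t)\coloneqq \int_0^t u(s)\bigl(1-e^{-(t-s)/a}\bigr)\,ds,\qquad t\ge 0,
\]
so that the hypothesis \eqref{eq:2nd-Gronwall} reads $u(t)\le cv(t)$. Because the kernel $1-e^{-(t-s)/a}$ vanishes at $s=t$ and $u$ is continuous, a direct differentiation under the integral sign gives
\[
v'(t)=\frac{1}{a}\int_0^t u(s)e^{-(t-s)/a}\,ds,\qquad v''(t)=\frac{u(t)}{a}-\frac{v'(t)}{a},
\]
so that $v\in C^2[0,\infty)$, $v(0)=v'(0)=0$, and $av''(t)+v'(t)=u(t)$. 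Combining this with the hypothesis yields the linear differential inequality
\[
av''(t)+v'(t)-cv(t)\le 0,\qquad t\ge 0,\qquad v(0)=v'(0)=0.
\]

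Next I would factor the operator $aD^2+D-c$. Its characteristic roots are
\[
\lambda_{\pm}=\frac{-1\pm\sqrt{1+4ac}}{2a},
\]
which satisfy $\lambda_->0>\lambda_-$ -- sorry, I mean $\lambda_+>0>\lambda_-$ -- since $a,c>0$ force $\sqrt{1+4ac}>1$. Thus $aD^2+D-c=a(D-\lambda_+)(D-\lambda_-)$. Setting $w\coloneqq v'-\lambda_- v$, one has $w(0)=0$ and
\[
a\bigl(w'(t)-\lambda_+ w(t)\bigr)=av''(t)+v'(t)-cv(t)\le 0,
\]
so $(e^{-\lambda_+ t}w(t))'\le 0$. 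Integrating from $0$ to $t$ yields $w(t)\le 0$, i.e.\ $v'(t)\le \lambda_- v(t)$.

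Finally, since $v(0)=0$ and $\lambda_-<0$, the inequality $(e^{-\lambda_- t}v(t))'=e^{-\lambda_- t}(v'(t)-\lambda_- v(t))\le 0$ integrated from $0$ to $t$ gives $v(t)\le 0$ for all $t\ge 0$. Plugging this back into the hypothesis yields $u(t)\le cv(t)\le 0$, as required. The argument is essentially routine; the only step that requires mild care is verifying that the boundary terms vanish so that $v'(0)=0$, which in turn is what allows the two successive first-order Gr\"onwall steps to be started from zero.
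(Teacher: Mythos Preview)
Your argument is correct and takes a genuinely different route from the paper. The paper never differentiates: it introduces the running maximum $U(t)=\max\{\max_{s\in[0,t]}u(s),0\}$, uses only the crude bound $0\le 1-e^{-(t-s)/a}\le 1$ to obtain $u(t)\le c\int_0^t U(s)\,ds$, and then shows $U(t)\le c\int_0^t U(s)\,ds$ so that the standard first-order Gr\"onwall lemma forces $U\equiv 0$. Your approach instead exploits the specific structure of the kernel: you recognize that $1-e^{-(t-s)/a}$ is (up to a constant) the Green's function for $aD^2+D$, so that $v$ satisfies the exact identity $av''+v'=u$, and then you factor the resulting constant-coefficient operator $aD^2+D-c$ into two first-order pieces. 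The paper's method is more robust --- it would apply verbatim with any kernel $K(t,s)\in[0,1]$ --- whereas your method is tied to this particular kernel but is arguably more transparent about why the second-order nature of the inequality causes no trouble, and it yields the sharper intermediate information $v'\le \lambda_- v$ along the way.
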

\begin{proof}
    Define the nonnegative function $U(t):[0,\infty)\to [0,\infty)$ as
    \begin{equation} \label{App-B-2}
    U(t)=\max\left\{\max_{s\in [0,t]}u(s),0\right\}.
    \end{equation}
    Then, it suffices to check that 
    \[  U\equiv 0. \]
\noindent $\bullet$~Step A: we claim that 
\[ U(0) = 0. \]
It follows from \eqref{eq:2nd-Gronwall} at $t=0$ that 
\begin{equation} \label{App-B-3}
  u(0) \leq 0.
\end{equation}   
Then, we use \eqref{App-B-2} and \eqref{App-B-3} to see 
\begin{equation} \label{App-B-3-1}
U(0) = \max\left\{ u(0),~0\right\} = 0.  
\end{equation}
\noindent $\bullet$~Step B: It follows from the definition of $U$ that $U$ is continuous, monotonically nondecreasing, and 
\begin{equation} \label{App-B-4}
U\ge u.
\end{equation}
Now, we use $0\le 1-e^{-(t-s)/a}\le 1$, \eqref{eq:2nd-Gronwall} and  \eqref{App-B-4} to get 
    \[
    u(t)\stackrel{\mathclap{\eqref{eq:2nd-Gronwall}}}{\le} c\int_0^t u(s)(1-e^{-(t-s)/a})ds\stackrel{\mathclap{\eqref{App-B-4}}}{\le} c\int_0^t U(s)(1-e^{-(t-s)/a})ds\le c\int_0^t U(s)ds.
    \]
    For each $t\ge 0$, we may find $s^*\in [0,t]$ such that 
    \[ u(s^*)=\max_{s\in [0,t]}u(s). \]
    By the above inequality,
    \[
    \max_{s\in [0,t]}u(s)=u(s^*)\le c\int_0^{s^*}U(s)ds\le c\int_0^tU(s)ds.
    \]
    On the other hand, since $0\le c\int_0^tU(s)ds$,  one has by definition \eqref{App-B-2} of $U$,
    \[
    U(t)\le c\int_0^tU(s)ds,\quad t\ge 0.
    \]
    By the standard Gr\"onwall lemma and \eqref{App-B-3-1}, we have 
 \[ U(t)\le U(0)e^{ct}=0, \quad \mbox{hence}~~U\equiv 0. \]
\end{proof}

%
%
%
%
\section{Proof of the quantitative higher-order Tikhonov perturbation, Proposition \ref{prop:mto0quant}}\label{app:mto0}
\setcounter{equation}{0} 
We begin by noting that since
\begin{equation}\label{eq:mto0-zeroth}
\sqrt{1+8m\kappa}> 1,\quad \frac{-1+\sqrt{1+8m\kappa}}{2m}< 2\kappa,\quad \mathrm{and}\quad 1-e^{-t/m}<1,
\end{equation}
in order to show \eqref{eq:C0-approx-abs-vanilla} it is enough to show the stronger statement
\begin{align}\label{eq:C0-approx-abs}
\begin{aligned}
\|(\Theta(m,t)-\Theta(0,t))\|_\infty&\le m\left|\frac{\max_i(\omega_i^0-\nu_i)+\min_i(\omega_i^0-\nu_i)}2\right|(1-e^{-t/m})\\
&\quad +\frac{m(\mathcal{D}(\Omega^0-\mathcal{V})+2\kappa)}{2\sqrt{1+8m\kappa}}e^{-t/2m}\left(e^{\frac{\sqrt{1+8m\kappa}}{2m}t}-e^{-\frac{\sqrt{1+8m\kappa}}{2m}t}\right),
\end{aligned}
\end{align}
and in order to show \eqref{eq:C0-approx-rel-vanilla} it is enough to show the stronger statement
\begin{align}\label{eq:C0-approx-rel}
\begin{aligned}
\mathcal{D}(\Theta(m,t)-\Theta(0,t))&\le \frac{m(\mathcal{D}(\Omega^0-\mathcal{V})+2\kappa)}{\sqrt{1+8m\kappa}}e^{-t/2m}\left(e^{\frac{\sqrt{1+8m\kappa}}{2m}t}-e^{-\frac{\sqrt{1+8m\kappa}}{2m}t}\right).
\end{aligned}
\end{align}
Also, in order to show \eqref{eq:C1-approx-abs-vanilla}, it is enough to show the stronger statement
\begin{align}\label{eq:C1-approx-abs}
\begin{aligned}
\|(\dot\Theta(m,t)-\dot\Theta(0,t))\|_\infty&\le (\|\Omega^0-\mathcal{V}\|_\infty+\kappa)e^{-t/m}+ m\kappa(\mathcal{D}(\mathcal{V})+2\kappa)(1-e^{-t/m})\\
&\quad +\frac{2m\kappa(\mathcal{D}(\Omega^0-\mathcal{V})+2\kappa)}{\sqrt{1+8m\kappa}(1+\sqrt{1+8m\kappa})}\left(e^{\frac{-1+\sqrt{1+8m\kappa}}{2m}t}-1\right),
\end{aligned}
\end{align}
and in order to show \eqref{eq:C1-approx-rel-vanilla} it is enough to show the stronger statement
\begin{align}\label{eq:C1-approx-rel}
\begin{aligned}
\mathcal{D}(\dot\Theta(m,t)-\dot\Theta(0,t))&\le (\mathcal{D}(\Omega^0-\mathcal{V})+2\kappa)e^{-t/m}+ 2m\kappa(\mathcal{D}(\mathcal{V})+2\kappa)(1-e^{-t/m})\\
&\quad +\frac{4m\kappa(\mathcal{D}(\Omega^0-\mathcal{V})+2\kappa)}{\sqrt{1+8m\kappa}(1+\sqrt{1+8m\kappa})}e^{\frac{-1+\sqrt{1+8m\kappa}}{2m}t}.
\end{aligned}
\end{align}

Inequalities \eqref{eq:C0-approx-abs-vanilla}, \eqref{eq:C0-approx-rel-vanilla}, \eqref{eq:C1-approx-abs-vanilla}, and \eqref{eq:C1-approx-rel-vanilla} were written in that way to better visualize the convergence as $m\to 0$. Not much was lost in the simplified forms \eqref{eq:C0-approx-abs-vanilla}, \eqref{eq:C0-approx-rel-vanilla}, \eqref{eq:C1-approx-abs-vanilla}, and \eqref{eq:C1-approx-rel-vanilla} since the approximation of \eqref{eq:mto0-zeroth} is sharp up to  the zeroth order in the limit $m\to 0$. We now prove each statement of Proposition \ref{prop:mto0quant} in order. \newline

\subsection{Proof of the first statement} We must prove \eqref{eq:C0-approx-abs} and \eqref{eq:C0-approx-rel}. We derive the Duhamel principle \eqref{B-4} to see that for $i\in [N]$ and $t\ge 0$,
    \begin{align}\label{eq:0th-order-duhamel-inertia}
    \begin{aligned}
    & \theta_i(m,t)-\theta_i^0  = \int_0^t \omega_i(m,s)ds\\
    & \hspace{0.5cm} \stackrel{\mathclap{\eqref{B-4}}}{=}\int_0^t\left[\omega^0_i e^{-s/m} + \nu_i (1 -e^{-s/m}) + \frac{\kappa}{Nm}\sum_{l=1}^N \int_0^s e^{-(s-\tau)/m}\sin(\theta_l(m,\tau) -\theta_i(m,\tau))d\tau \right]ds\\
    & \hspace{0.5cm}  =m\omega_i^0(1-e^{-t/m})+\nu_i(t-m+me^{-t/m})\\
    & \hspace{0.7cm} +\frac\kappa N\sum_{l=1}^N\int_0^t\sin(\theta_l(m,s)-\theta_i(m,s))(1-e^{-(t-s)/m}) ds,
    \end{aligned}
    \end{align}
where in the final equality we used Fubini's theorem. On the other hand, integrating the first-order ODE of \eqref{A-2} directly gives
\begin{equation}\label{eq:0th-order-duhamel-sans-inertia}
    \theta_i(0,t)-\theta_i^0=\int_0^t\omega_i(0,s)ds=\nu_i t+\frac\kappa N\sum_{l=1}^N \int_0^t \sin(\theta_l(0,s)-\theta_i(0,s))ds,\quad i\in [N]
,~t\ge 0.
\end{equation}
We subtract \eqref{eq:0th-order-duhamel-sans-inertia} from \eqref{eq:0th-order-duhamel-inertia} that for $i\in [N]$ and $t\ge 0$,
\begin{align}\label{eq:0th-order-comparison}
\begin{aligned}
    &\theta_i(m,t)-\theta_i(0,t) =m(\omega_i^0-\nu_i)(1-e^{-t/m})\\
    &\quad+\frac\kappa N\sum_{l=1}^N\int_0^t(\sin(\theta_l(m,s)-\theta_i(m,s))-\sin(\theta_l(0,s)-\theta_i(0,s)))(1-e^{-(t-s)/m}) ds\\
    &\quad -\frac\kappa N\sum_{l=1}^N\int_0^t \sin(\theta_l(0,s)-\theta_i(0,s))e^{-(t-s)/m} ds.
\end{aligned}
\end{align}
Again, we subtract \eqref{eq:0th-order-comparison} for $j\in [N]$ from \eqref{eq:0th-order-comparison} for $i\in [N]$ to derive
\begin{align*}
    &\theta_i(m,t)-\theta_j(m,t)-\theta_i(0,t)+\theta_j(0,t)\\
    & \hspace{0.5cm} =m(\omega_i^0-\omega_j^0-\nu_i+\nu_j)(1-e^{-t/m})\\
    & \hspace{0.7cm} +\frac\kappa N\sum_{l=1}^N\int_0^t(\sin(\theta_l(m,s)-\theta_i(m,s))-\sin(\theta_l(0,s)-\theta_i(0,s)))(1-e^{-(t-s)/m}) ds\\
    &  \hspace{0.7cm} -\frac\kappa N\sum_{l=1}^N\int_0^t(\sin(\theta_l(m,s)-\theta_j(m,s))-\sin(\theta_l(0,s)-\theta_j(0,s)))(1-e^{-(t-s)/m}) ds\\
    &  \hspace{0.7cm} -\frac\kappa N\sum_{l=1}^N\int_0^t \sin(\theta_l(0,s)-\theta_i(0,s))e^{-(t-s)/m} ds\\
    &  \hspace{0.7cm} +\frac\kappa N\sum_{l=1}^N\int_0^t \sin(\theta_l(0,s)-\theta_j(0,s))e^{-(t-s)/m} ds.
\end{align*}
Now, we use 
\[
|\sin(\theta_l(m,s)-\theta_i(m,s))-\sin(\theta_l(0,s)-\theta_i(0,s))|\le \mathcal{D}(\Theta(m,s)-\Theta(0,s))
\]
and
\[
|\sin(\theta_l(0,s)-\theta_i(0,s))|\le 1
\]
to find that for $i,j\in [N]$ and $t\ge 0$,
\begin{align*}
\begin{aligned} 
 &|\theta_i(m,t)-\theta_j(m,t)-\theta_i(0,t)+\theta_j(0,t)|\\
 & \hspace{0.5cm} \le m(\mathcal{D}(\Omega^0-\mathcal{V})+2\kappa)(1-e^{-t/m}) +2\kappa \int_0^t \mathcal{D}(\Theta(m,s)-\Theta(0,s))(1-e^{-(t-s)/m}) ds.
\end{aligned}
\end{align*}
We take the maximum the above relation over all $i,j\in [N]$ to find 
\begin{align*}
\begin{aligned}
 &\mathcal{D}(\Theta(m,t)-\Theta(0,t))\\
 & \hspace{0.5cm} \le m(\mathcal{D}(\Omega^0-\mathcal{V})+2\kappa)(1-e^{-t/m})  +2\kappa \int_0^t \mathcal{D}(\Theta(m,s)-\Theta(0,s))(1-e^{-(t-s)/m}) ds.
\end{aligned}
\end{align*}
We claim that this implies \eqref{eq:C0-approx-rel}. Indeed, denoting the right-hand side of \eqref{eq:C0-approx-rel} by
\[
v(t)\coloneqq\frac{m(\mathcal{D}(\Omega^0-\mathcal{V})+2\kappa)}{\sqrt{1+8m\kappa}}e^{-t/2m}\left(e^{\frac{\sqrt{1+8m\kappa}}{2m}t}-e^{-\frac{\sqrt{1+8m\kappa}}{2m}t}\right),\quad t\ge 0,
\]
one may calculate that this satisfies
\begin{equation}\label{eq:2nd-Gronwall-solution}
v(t)=m(\mathcal{D}(\Omega^0-\mathcal{V})+2\kappa)(1-e^{-t/m})+2\kappa \int_0^t v(s)(1-e^{-(t-s)/m}) ds.
\end{equation}
Therefore, we set
\[
u(t)\coloneqq \mathcal{D}(\Theta(m,t)-\Theta(0,t))-v(t),\quad t\ge 0
\]
to find 
\[
u(t)\le 2\kappa \int_0^t u(s)(1-e^{-(t-s)/m}) ds.
\]
By Lemma \ref{lem:2nd-Gronwall}, we have $u(t)\le 0$ and 
\begin{equation}\label{eq:2nd-Gronwall-comparison}
    \mathcal{D}(\Theta(m,t)-\Theta(0,t))\le v(t),\quad t\ge 0.
\end{equation}
This is equivalent to \eqref{eq:C0-approx-rel}. We note from \eqref{eq:0th-order-comparison} that
\begin{align*}
    &|\theta_i(m,t)-\theta_i(0,t)|\\
    & \hspace{0.5cm} \le m|\omega_i^0-\nu_i|(1-e^{-t/m})
    +\kappa \int_0^t \mathcal{D}(\Theta(m,s)-\Theta(0,s))(1-e^{-(t-s)/m}) ds
    +\kappa \int_0^t e^{-(t-s)/m} ds\\
    &  \hspace{0.5cm} \stackrel{\mathclap{\eqref{eq:2nd-Gronwall-comparison}}}{\le} m(\|\Omega^0-\mathcal{V}\|_\infty+\kappa)(1-e^{-t/m})+\kappa \int_0^t v(s)(1-e^{-(t-s)/m}) ds\\
    & \hspace{0.5cm} \stackrel{\mathclap{\eqref{eq:2nd-Gronwall-solution}}}{=}~m(\|\Omega^0-\mathcal{V}\|_\infty+\kappa)(1-e^{-t/m})-m(\frac 12\mathcal{D}(\Omega^0-\mathcal{V})+\kappa)(1-e^{-t/m})+\frac 12 v(t)\\
    & \hspace{0.5cm} \stackrel{\mathclap{\eqref{eq:diameter-max-diff}}}{=}~ m\left|\frac{\max_j(\omega_j^0-\nu_j)+\min_j(\omega_j^0-\nu_j)}2\right|(1-e^{-t/m})+\frac 12 v(t), \quad i\in [N],~t\ge 0.
\end{align*}
This gives \eqref{eq:C0-approx-abs}. \newline

\subsection{Proof of the second statement} Below, we are ready to prove \eqref{eq:C1-approx-abs} and \eqref{eq:C1-approx-rel}.  \newline

\noindent $\bullet$ Case A (Verification of \eqref{eq:C1-approx-abs}):~First, we note that
\begin{align}\label{eq:1st-speed-rel}
\begin{aligned}
|\dot\theta_i(0,t)-\dot\theta_j(0,t)|&=\left|\nu_i-\nu_j+\frac{\kappa}{N}\sum_{l=1}^N(\sin(\theta_l(0,t)-\theta_i(0,t))-\sin(\theta_l(0,t)-\theta_j(0,t)))\right|\\
&\le \mathcal{D}(\mathcal{V})+2\kappa,\quad i,j\in [N],~t\ge 0.
\end{aligned}
\end{align}
Next, we differentiate the ODE of \eqref{A-2} to see
\begin{align}\label{eq:1st-acc}
\begin{aligned}
|\ddot\theta_i(0,t)|&=\left|\frac{\kappa}{N}\sum_{l=1}^N\cos(\theta_l(0,t)-\theta_i(0,t))(\dot\theta_j(0,t)-\dot\theta_i(0,t))\right|\le \kappa \max_{j\in [N]}|\dot\theta_j(0,t)-\dot\theta_i(0,t)|\\
&\le\kappa(\mathcal{D}(\mathcal{V})+2\kappa),\quad i\in [N],~t\ge 0.
\end{aligned}
\end{align}
It follows from \eqref{A-1} and \eqref{A-2} that
\begin{align*}
    &m(\ddot\theta_i(m,t)-\ddot\theta_i(0,t))+(\dot\theta_i(m,t)-\dot\theta_i(0,t))\\
    & \hspace{0.5cm} =\frac{\kappa}{N}\sum_{l=1}^N (\sin(\theta_l(m,t)-\theta_i(m,t))-\sin(\theta_l(0,t)-\theta_i(0,t)))-m\ddot\theta_i(0,t).
\end{align*}
By invoking Duhamel's principle as in \eqref{B-4} and its two preceding equations, we obtain
\begin{align}\label{eq:speed-duhamel-comparison}
\begin{aligned}
&\dot\theta_i(m,t)-\dot\theta_i(0,t)  \\
& \hspace{0.5cm} =(\omega_i^0-\dot\theta_i(0,0))e^{-t/m} \\
& \hspace{0.5cm} +\frac{\kappa}{Nm}\sum_{l=1}^N\int_0^t e^{-(t-s)/m}(\sin(\theta_l(m,s)-\theta_i(m,s))-\sin(\theta_l(0,s)-\theta_i(0,s)))ds\\
& \hspace{0.5cm} +\frac 1m \int_0^t e^{-(t-s)/m}(-m\ddot\theta_i(0,s))ds \\
& \hspace{0.5cm} =: {\mathcal I}_{11} e^{-t/m} + {\mathcal I}_{12} + {\mathcal I}_{13}, \quad   i\in [N],~t\ge 0.
\end{aligned}
\end{align}
Next, we estimate ${\mathcal I}_{1i},~i =1,2,3$ as follows: \newline

\noindent $\diamond$ Case A.1 (Estimate of ${\mathcal I}_{11}$):~By direct calculation, we have
\begin{equation}\label{eq:speed-duhamel-comparison-1}
    |\omega_i^0-\dot\theta_i(0,0)|=\left|\omega_i^0-\nu_i-\frac{\kappa}{N}\sum_{l=1}^N\sin(\theta^0_l-\theta_i^0)\right|\le \|\Omega^0-\mathcal{V}\|_\infty+\kappa.
\end{equation}
\noindent $\diamond$ Case A.2  (Estimate of ${\mathcal I}_{12}$): Note that 
\begin{align}\label{eq:speed-duhamel-comparison-2}
\begin{aligned}
    &\frac{\kappa}{Nm}\sum_{l=1}^N\int_0^t e^{-(t-s)/m}(\sin(\theta_l(m,s)-\theta_i(m,s))-\sin(\theta_l(0,s)-\theta_i(0,s)))ds\\
    & \hspace{0.5cm} \le \frac\kappa m \int_0^t e^{-(t-s)/m}\mathcal{D}(\Theta(m,s)-\Theta(0,s))ds\\
    & \hspace{0.5cm} \stackrel{\mathclap{\eqref{eq:C0-approx-rel}}}{\le}\frac\kappa m \int_0^t e^{-(t-s)/m}\frac{m(\mathcal{D}(\Omega^0-\mathcal{V})+2\kappa)}{\sqrt{1+8m\kappa}}e^{\frac{-1+\sqrt{1+8m\kappa}}{2m}s}ds\\
    &  \hspace{0.5cm} \le \frac{2m\kappa(\mathcal{D}(\Omega^0-\mathcal{V})+2\kappa)}{\sqrt{1+8m\kappa}(1+\sqrt{1+8m\kappa})}\left(e^{\frac{-1+\sqrt{1+8m\kappa}}{2m}t}-e^{-t/m}\right).
\end{aligned}
\end{align}
\noindent $\diamond$ Case A.3  (Estimate of ${\mathcal I}_{13}$): Note that 
\begin{align}\label{eq:speed-duhamel-comparison-3}
\begin{aligned}
    & \left|\frac 1m \int_0^t e^{-(t-s)/m}(-m\ddot\theta_i(0,s))ds\right| \le  \int_0^t e^{-(t-s)/m}|\ddot\theta_i(0,s))|ds \\
    & \hspace{1cm} \stackrel{\mathclap{\eqref{eq:1st-acc}}}{\le}\kappa (\mathcal{D}(\mathcal{V})+2\kappa)\int_0^t e^{-(t-s)/m}ds =m\kappa (\mathcal{D}(\mathcal{V})+2\kappa)(1-e^{-t/m}).
\end{aligned}
\end{align}
Finally, we substitute \eqref{eq:speed-duhamel-comparison-1}, \eqref{eq:speed-duhamel-comparison-2}, and \eqref{eq:speed-duhamel-comparison-3} into \eqref{eq:speed-duhamel-comparison} to find  \eqref{eq:C1-approx-abs}. \newline

\noindent $\bullet$ Case B (Verification of  \eqref{eq:C1-approx-rel}):~Given $i,j\in [N]$, we subtract \eqref{eq:speed-duhamel-comparison} for $j$ from \eqref{eq:speed-duhamel-comparison} for $i$ to derive
\begin{align}\label{eq:speed-duhamel-comparison-rel}
\begin{aligned}
    &\dot\theta_i(m,t)-\dot\theta_i(0,t)-\dot\theta_j(m,t)+\dot\theta_j(0,t)\\
    & \hspace{0.5cm} =(\omega_i^0-\omega_j^0-\dot\theta_i(0,0)+\dot\theta_j(0,0)) e^{-t/m}\\
    &  \hspace{0.7cm} +\frac{\kappa}{Nm}\sum_{l=1}^N\int_0^t e^{-(t-s)/m}(\sin(\theta_l(m,s)-\theta_i(m,s))-\sin(\theta_l(0,s)-\theta_i(0,s)))ds\\
    &  \hspace{0.7cm} -\frac{\kappa}{Nm}\sum_{l=1}^N\int_0^t e^{-(t-s)/m}(\sin(\theta_l(m,s)-\theta_j(m,s))-\sin(\theta_l(0,s)-\theta_j(0,s)))ds\\
    &  \hspace{0.7cm} +\frac 1m \int_0^t e^{-(t-s)/m}(-m\ddot\theta_i(0,s))ds +\frac 1m \int_0^t e^{-(t-s)/m}m\ddot\theta_j(0,s)ds \\
    & \hspace{0.7cm} = {\mathcal I}_{21}  e^{-t/m}  + {\mathcal I}_{22} + {\mathcal I}_{23} + {\mathcal I}_{24}. 
\end{aligned}
\end{align}
We estimate the first term $\mathcal{I}_{21}$ as
\begin{align}\label{eq:speed-duhamel-comparison-rel-1}
\begin{aligned}
    &|\omega_i^0-\omega_j^0-\dot\theta_i(0,0)+\dot\theta_j(0,0)|\\
    &=\left|\omega_i^0-\omega_j^0-\nu_i+\nu_j-\frac{\kappa}{N}\sum_{l=1}^N\sin(\theta^0_l-\theta_i^0)+\frac{\kappa}{N}\sum_{l=1}^N\sin(\theta^0_l-\theta_j^0)\right|\\
    &\le \mathcal{D}(\Omega^0-\mathcal{V})+2\kappa.
\end{aligned}
\end{align}
For other estimates ${\mathcal I}_{2\ell},~\ell=2,3,4$, we substitute inequalities \eqref{eq:speed-duhamel-comparison-rel-1}, \eqref{eq:speed-duhamel-comparison-2} for $i$, \eqref{eq:speed-duhamel-comparison-2} for $j$, \eqref{eq:speed-duhamel-comparison-3} for $i$, and \eqref{eq:speed-duhamel-comparison-3} for $j$ into \eqref{eq:speed-duhamel-comparison-rel} to derive the desired estimate  \eqref{eq:C1-approx-rel}.

\subsection{Proof of the third statement}
We begin with a simple lemma, which will be useful in applying Faa di Bruno's formula.
\begin{lemma}\label{lem:FdB-probmass}
For $n \in {\mathbb N}$ and $\alpha \in {\mathbb R}$, the following identity holds:
    \[
    \sum_{\substack{m_1,\ldots,m_n\in\mathbb{Z}_{\ge 0}\\ \sum_{\ell=1}^n \ell m_\ell =n}} \prod_{l=1}^n\frac{\alpha^{m_l}}{m_l!l^{m_l}} =\binom{n+\alpha-1}{n},
    \]
  where 
  \[ 
 \binom{n+\alpha-1}{n}=\frac{\alpha(\alpha+1)\cdots (\alpha+n-1)}{n!}.
\]  
     \end{lemma}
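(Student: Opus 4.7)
The plan is to prove the identity by comparing coefficients in two expansions of the generating function $(1-x)^{-\alpha}$. On the one hand, the classical binomial series gives
\[
(1-x)^{-\alpha}=\sum_{n\geq 0}\binom{n+\alpha-1}{n}x^n,
\]
so the right-hand side of the lemma is precisely the coefficient of $x^n$ in $(1-x)^{-\alpha}$.

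On the other hand, using $-\log(1-x)=\sum_{l\geq 1}x^l/l$, I would write $(1-x)^{-\alpha}=\exp\bigl(\alpha\sum_{l\geq 1}x^l/l\bigr)$, then Taylor-expand $\exp$ and apply the multinomial theorem to $\bigl(\sum_{l\geq 1}x^l/l\bigr)^k$. Grouping the tuples $(l_1,\dots,l_k)$ by their type $(m_l)_{l\geq 1}$, where $m_l$ counts how many of the $l_i$ equal $l$ (so that $\sum_l m_l=k$), the number of tuples of each type is $k!/\prod_l m_l!$. After dividing by $k!$, the $k$-th term contributes
\[
\sum_{\substack{(m_l):\,\sum_l m_l=k}}\prod_l\frac{\alpha^{m_l}}{m_l!\,l^{m_l}}\,x^{\sum_l lm_l},
\]
where I have used the identity $\alpha^k=\prod_l\alpha^{m_l}$. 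Summing over $k\geq 0$ removes the constraint $\sum_l m_l=k$, and regrouping by $n=\sum_l lm_l$ yields
\[
(1-x)^{-\alpha}=\sum_{n\geq 0}\Bigl(\sum_{\substack{m_1,\ldots,m_n\geq 0\\ \sum_{\ell=1}^n \ell m_\ell=n}}\prod_{l=1}^n\frac{\alpha^{m_l}}{m_l!\,l^{m_l}}\Bigr)x^n,
\]
where the restriction to $l\leq n$ is harmless since $m_l=0$ for $l>n$ whenever $\sum_l lm_l=n$. Equating the coefficients of $x^n$ in the two expansions finishes the proof.

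I do not anticipate any genuine obstacle: this identity is the classical cycle-index (or exponential-formula) identity for the symmetric group evaluated at the constants $z_l=\alpha$, and every manipulation above is valid either analytically for $|x|<1$ (by absolute convergence of all series involved) or, more cleanly, as an identity in the formal power series ring $\mathbb{R}[[x]]$. The only mild bookkeeping required is matching the restricted sum in the lemma's statement ($l\leq n$) with the unrestricted sum that arises naturally in the derivation, which as noted above is automatic.
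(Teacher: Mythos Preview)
Your proof is correct and follows essentially the same approach as the paper: both compare the coefficient of $x^n$ in $(1-x)^{-\alpha}$ obtained from the binomial series with that obtained by writing $(1-x)^{-\alpha}=\exp\bigl(\alpha\sum_{l\ge 1}x^l/l\bigr)$ and expanding. The only cosmetic difference is that the paper factors the exponential as $\prod_{l=1}^n\exp(\alpha x^l/l)$ and expands each factor separately, whereas you expand $\exp$ directly and invoke the multinomial theorem; the resulting computations are identical.
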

\begin{proof}
    For a real-analytic function $f:I\to \mathbb{R}$ defined in a neighborhood $I$ of $0$, we denote by $f(x)[x^n]$ the coefficient of $x^n$ in the power series expansion of $f$ at $x=0$. Then, we have
    \begin{align*}
        \binom{n+\alpha-1}{n}&=\left(\frac{1}{1-x}\right)^\alpha [x^n]=\exp\left(-\alpha\log(1-x)\right)[x^n]\\
        &=\exp\left(\alpha x+\frac{\alpha x^2}{2}+\cdots +\frac{\alpha x^n}{n}+\cdots\right)[x^n]\\
        &=\exp\left(\alpha x+\frac{\alpha x^2}{2}+\cdots+ \frac{\alpha x^n}{n}\right)[x^n]\\
        &=\left(\exp\left(\alpha x\right)\exp\left(\frac{\alpha x^2}{2}\right)\cdots\exp\left(\frac{\alpha x^n}{n}\right)\right)[x^n]\\
        &=\left(\left(\sum_{m_1\ge 0}\frac{\alpha^{m_1}x^{m_1}}{m_1!}\right)\left(\sum_{m_2\ge 0}\frac{\alpha^{m_2}x^{2m_2}}{m_2!2^{m_2}}\right)\cdots \left(\sum_{m_n\ge 0}\frac{\alpha^{m_n}x^{nm_n}}{m_n!n^{m_n}}\right)\right)[x^n]\\
        &=\sum_{\substack{m_1,\ldots,m_n\in\mathbb{Z}_{\ge 0}\\ \sum_{\ell=1}^n \ell m_\ell =n}} \prod_{l=1}^n\frac{\alpha^{m_l}}{m_l!l^{m_l}}.
    \end{align*}
\end{proof}
\begin{remark} 
Note that for $\alpha=1$ and $\alpha=2$, we have
    \[
    \sum_{\substack{m_1,\ldots,m_n\in\mathbb{Z}_{\ge 0}\\ \sum_{\ell=1}^n \ell m_\ell =n}} \prod_{l=1}^n\frac{1}{m_l!l^{m_l}} =1,\quad \sum_{\substack{m_1,\ldots,m_n\in\mathbb{Z}_{\ge 0}\\ \sum_{\ell=1}^n \ell m_\ell =n}} \prod_{l=1}^n\frac{2^{m_l}}{m_l!l^{m_l}} =n+1.
    \]

\end{remark}
To obtain the convergence statements, we will need bounds on the $n$-th time derivatives of $\theta_i(m,t)$ and $\theta_i(0,t)$. We first have the following.
\begin{lemma}\label{lem:1st-deriv-bound}
    For each integer $n> 1,  i\in [N],~t\ge 0$, the following relations hold. 
    \begin{align}
    \begin{aligned} \label{eq:1st-deriv-abs-bound}
 &  (i)~ |\theta_i^{(n)}(0,t)|\le 
    \kappa (n-1)! (\mathcal{D}(\mathcal{V})+2\kappa)^{n-1}.  \\
 & (ii)~|\theta_i^{(n)}(0,t)-\theta_j^{(n)}(0,t)|\le (n-1)! (\mathcal{D}(\mathcal{V})+2\kappa)^{n}.
    \end{aligned}
    \end{align}
\end{lemma}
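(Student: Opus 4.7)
The plan is to prove \eqref{eq:1st-deriv-abs-bound} by strong induction on $n$, establishing (i) and (ii) simultaneously. The base case is $n=1$ for (ii), which is precisely the content of \eqref{eq:1st-speed-rel}: $|\dot\theta_i(0,t)-\dot\theta_j(0,t)|\le \mathcal{D}(\mathcal{V})+2\kappa = 0!\,(\mathcal{D}(\mathcal{V})+2\kappa)^1$. (The case $n=2$ for (i) already appears as \eqref{eq:1st-acc}, but the induction bypasses this.)

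For the inductive step, fix $n \ge 2$, and suppose (ii) holds with $n$ replaced by any $j \in \{1,\dots,n-1\}$. Writing $\phi_{li}(t) \coloneqq \theta_l(0,t) - \theta_i(0,t)$, I differentiate \eqref{A-2} to get
\[
\theta_i^{(n)}(0,t) = \frac{\kappa}{N}\sum_{l=1}^N \frac{d^{n-1}}{dt^{n-1}}\sin(\phi_{li}(t)),
\]
and apply Fa\`a di Bruno's formula:
\[
\frac{d^{n-1}}{dt^{n-1}}\sin(\phi_{li}(t)) = \sum_{\substack{m_1,\dots,m_{n-1}\ge 0\\ \sum \ell m_\ell=n-1}} \frac{(n-1)!}{m_1!\cdots m_{n-1}!}\,\sin^{(\sum m_l)}(\phi_{li})\prod_{l=1}^{n-1}\left(\frac{\phi_{li}^{(l)}}{l!}\right)^{m_l}.
\]
Using $|\sin^{(k)}|\le 1$ and the inductive bound $|\phi_{li}^{(l)}(t)|\le (l-1)!\,(\mathcal{D}(\mathcal{V})+2\kappa)^l$ valid for all $l\ge 1$, each factor satisfies $|\phi_{li}^{(l)}|/l! \le (\mathcal{D}(\mathcal{V})+2\kappa)^l/l$, so
\[
\left|\frac{d^{n-1}}{dt^{n-1}}\sin(\phi_{li}(t))\right|\le (n-1)!\,(\mathcal{D}(\mathcal{V})+2\kappa)^{n-1}\sum_{\substack{m_1,\dots,m_{n-1}\ge 0\\ \sum \ell m_\ell=n-1}}\prod_{l=1}^{n-1}\frac{1}{m_l!\,l^{m_l}}.
\]
By Lemma~\ref{lem:FdB-probmass} applied with $\alpha=1$, the combinatorial sum equals $\binom{n-1}{n-1}=1$, yielding $|\theta_i^{(n)}(0,t)|\le \kappa(n-1)!(\mathcal{D}(\mathcal{V})+2\kappa)^{n-1}$, which is (i).

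To deduce (ii) for the same $n$, I simply use the triangle inequality together with $2\kappa \le \mathcal{D}(\mathcal{V})+2\kappa$:
\[
|\theta_i^{(n)}(0,t)-\theta_j^{(n)}(0,t)|\le 2\kappa(n-1)!(\mathcal{D}(\mathcal{V})+2\kappa)^{n-1}\le (n-1)!(\mathcal{D}(\mathcal{V})+2\kappa)^n,
\]
closing the induction. The main technical point of the argument is the exact combinatorial identity in Lemma~\ref{lem:FdB-probmass}, which is precisely what ensures that the Fa\`a di Bruno sum collapses to $1$ rather than growing with $n$; this is what allows the bound in (i) to carry only the factor $(n-1)!$ (and not an additional exponential in $n$).
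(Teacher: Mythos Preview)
Your proof is correct and follows essentially the same approach as the paper: strong induction on $n$ using Fa\`a di Bruno's formula together with Lemma~\ref{lem:FdB-probmass} at $\alpha=1$ to collapse the combinatorial sum, and the triangle inequality to pass from (i) to (ii). The only cosmetic difference is that the paper separately verifies the case $n=2$ of (i) via \eqref{eq:1st-acc} before entering the induction, whereas your induction handles $n=2$ directly from the $n=1$ base case of (ii).
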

\begin{proof} We use the induction for the verification of the relations \eqref{eq:1st-deriv-abs-bound}. \newline

\noindent $\bullet$~Step A (Initial step with $n = 1, 2$): 
    The case $n=1$ for $\eqref{eq:1st-deriv-abs-bound}_2$ is true by \eqref{eq:1st-speed-rel}. For each $n>1$, the statement $\eqref{eq:1st-deriv-abs-bound}_1$ implies $\eqref{eq:1st-deriv-abs-bound}_2$, since
    \[
    |\theta_i^{(n)}(0,t)-\theta_j^{(n)}(0,t)|\le 2\kappa (n-1)! (\mathcal{D}(\mathcal{V})+2\kappa)^{n-1}<(n-1)! (\mathcal{D}(\mathcal{V})+2\kappa)^{n}.
    \]
    Also, the case $n=2$ for $\eqref{eq:1st-deriv-abs-bound}_1$ is true by \eqref{eq:1st-acc}. Hence the case $n=2$ for $\eqref{eq:1st-deriv-abs-bound}_2$ is also true.
    
\vspace{0.2cm}

\noindent $\bullet$~Step B (Inductive step):~Suppose $\eqref{eq:1st-deriv-abs-bound}_1$ is true for all $2\le n\le k$, and $\eqref{eq:1st-deriv-abs-bound}_2$ is true for all $1\le n\le k$, where $k\ge 2$ is an integer. Now, differentiating \eqref{A-2} $k$ times, we obtain by Faa di Bruno's formula \cite{F} that
    \[
\theta^{(k+1)}_i(0,t)=\frac{\kappa}{N}\sum_{j=1}^N\sum_{\substack{m_1,\cdots,m_k\in\mathbb{Z}_{\ge 0}\\ \sum_{\ell=1}^k \ell m_\ell =k}} k! \sin^{(\sum_{l=1}^nm_l)}(\theta_j(0,t)-\theta_i(0,t)) \prod_{l=1}^k \frac{1}{m_l!}\left(\frac{ \theta_j^{(l)}(0,t) -\theta_i^{(l)}(0,t)}{l!}\right)^{m_l},
\]
and using $|\sin^{(\sum_l m_l)}|\le 1$ and $\eqref{eq:1st-deriv-abs-bound}_2$, we have
\begin{align*}
|\theta^{(k+1)}_i(0,t)|&\le \frac{k!\kappa}{N}\sum_{j=1}^N\sum_{\substack{m_1,\cdots,m_k\in\mathbb{Z}_{\ge 0}\\ \sum_{\ell=1}^k \ell m_\ell =k}} \prod_{l=1}^k \frac{1}{m_l!}\left(\frac{ |\theta_j^{(l)}(0,t) -\theta_i^{(l)}(0,t)|}{l!}\right)^{m_l}\\
&\stackrel{\mathclap{\eqref{eq:1st-deriv-abs-bound}_2}}{\le}~ k!\kappa\sum_{\substack{m_1,\cdots,m_k\in\mathbb{Z}_{\ge 0}\\ \sum_{\ell=1}^k \ell m_\ell =k}} \prod_{l=1}^k \frac{1}{m_l!}\left(\frac{(\mathcal{D}(\mathcal{V})+2\kappa)^l}{l}\right)^{m_l}\\
&=k!\kappa (\mathcal{D}(\mathcal{V})+2\kappa)^k\sum_{\substack{m_1,\ldots,m_k\in\mathbb{Z}_{\ge 0}\\ \sum_{\ell=1}^k \ell m_\ell =k}} \prod_{l=1}^k\frac{1}{m_l!l^{m_l}}\\
&\stackrel{\mathclap{\mathrm{Lemma~}\ref{lem:FdB-probmass}}}{=}\qquad k!\kappa (\mathcal{D}(\mathcal{V})+2\kappa)^k,
\end{align*}
closing the induction.
\end{proof}

\begin{lemma}\label{lem:2nd-deriv-initial-bound}
    For $m>0$, $i,j\in [N]$, and each integer $n\ge 1$, we have
\begin{align}
\begin{aligned} \label{eq:2nd-deriv-initial-power-bound}
& (i)~|\theta^{(n)}_i(m,0)-\theta^{(n)}_j(m,0)|\le 2(n-1)!\left(\kappa+\frac{\mathcal{D}(\Omega^0)+\mathcal{D}(\mathcal{V})}2+ \frac 9{16m}\right)^{n}. \\
& (ii)~|\theta^{(n)}_i(m,0)|\le (n-1)!\left(\kappa+\|\Omega^0\|_\infty+\|\mathcal{V}\|_\infty+ \frac 9{16m}\right)^{n}.
\end{aligned}
\end{align}
\end{lemma}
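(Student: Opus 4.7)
The plan is to run a strong induction on $n$, establishing (i) and (ii) simultaneously and closely paralleling the proof of Lemma~\ref{lem:1st-deriv-bound}. First, I would differentiate the ODE \eqref{A-1} $k$ times ($k\ge 0$) to obtain
\[
m\theta_i^{(k+2)}+\theta_i^{(k+1)}=\nu_i\,\delta_{k,0}+\tfrac{\kappa}{N}\sum_{l=1}^{N}\tfrac{d^k}{dt^k}\sin(\theta_l-\theta_i),
\]
which at $t=0$ yields a recursion for $|\theta_i^{(k+2)}(m,0)|$ and analogously for $|\theta_i^{(k+2)}(m,0)-\theta_j^{(k+2)}(m,0)|$. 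For $k\ge 1$, Fa\`a di Bruno's formula combined with the inductive hypothesis for (i) and Lemma~\ref{lem:FdB-probmass} with $\alpha=2$ gives $\big|\tfrac{d^k}{dt^k}\sin(\theta_l-\theta_i)\big|_{t=0}\le (k+1)!\,B^k$, where $B=\kappa+\tfrac{\mathcal{D}(\Omega^0)+\mathcal{D}(\mathcal{V})}{2}+\tfrac{9}{16m}$.

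For the inductive step of (i) with $k\ge 1$, substituting the bounds gives
\[
|\theta_i^{(k+2)}(m,0)-\theta_j^{(k+2)}(m,0)|\le \tfrac{2k!\,B^{k+1}+2\kappa(k+1)!\,B^k}{m},
\]
and comparison to the desired bound $2(k+1)!\,B^{k+2}$ reduces to $\tfrac{B}{m(k+1)}+\tfrac{\kappa}{m}\le B^2$. Setting $y=Bm$ and using $\kappa m\le Bm-\tfrac{9}{16}$ (since $B\ge\kappa+\tfrac{9}{16m}$), this is equivalent to
\[
y^2-\left(1+\tfrac{1}{k+1}\right)y+\tfrac{9}{16}\ge 0,
\]
whose discriminant $(1+\tfrac{1}{k+1})^2-\tfrac{9}{4}$ is nonpositive precisely when $k\ge 1$, vanishing at $k=1$ to give the perfect square $(y-\tfrac{3}{4})^2\ge 0$. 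For the base case $n=2$ (i.e.~$k=0$), I would avoid Fa\`a di Bruno altogether and use $|\sin(\theta_l^0-\theta_i^0)-\sin(\theta_l^0-\theta_j^0)|\le 2$ directly; the reduction then becomes $y^2-y+\tfrac{9}{16}\ge 0$, whose discriminant $1-\tfrac{9}{4}<0$ makes the inequality universal.

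The proof of (ii) would proceed identically with $A=\kappa+\|\Omega^0\|_\infty+\|\mathcal{V}\|_\infty+\tfrac{9}{16m}$ replacing $B$, using $B\le A$ to pass the Fa\`a di Bruno bound through; the base cases $n=1$ (trivial, since $|\omega_i^0|\le \|\Omega^0\|_\infty\le A$) and $n=2$ (immediate from $\tfrac{\kappa+\|\Omega^0\|_\infty+\|\mathcal{V}\|_\infty}{m}\le A^2$) are straightforward. The key insight---and essentially the only step requiring real thought---is the calibration of the constant $\tfrac{9}{16m}$: it is tuned precisely so that the quadratic discriminant $(1+\tfrac{1}{k+1})^2-\tfrac{9}{4}$ vanishes at the worst step $k=1$ and remains strictly negative at $k=0$. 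A smaller constant would fail to close the induction, while a larger one would weaken the conclusion. Apart from recognizing this calibration, the argument is largely mechanical.
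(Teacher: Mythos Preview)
Your proposal is correct and follows essentially the same approach as the paper: strong induction via Fa\`a di Bruno combined with Lemma~\ref{lem:FdB-probmass} at $\alpha=2$, then an algebraic check that the recursion closes. The only difference is cosmetic: where the paper rescales by $\kappa$ (setting $x=1/(m\kappa)$, $\lambda=B/\kappa$) and verifies $x(\tfrac{\lambda}{2}+1)\le\lambda^2$ by explicit polynomial manipulation, you rescale by $m$ (setting $y=Bm$) and reduce to the quadratic $y^2-(1+\tfrac{1}{k+1})y+\tfrac{9}{16}\ge 0$, reading off nonpositivity of the discriminant. Your formulation has the advantage of making transparent why the constant $\tfrac{9}{16m}$ is exactly what is needed---the discriminant vanishes at the tightest step $k=1$---a point the paper's computation leaves implicit.
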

\begin{proof}  We verify the assertions in \eqref{eq:2nd-deriv-initial-power-bound} using the method of induction. \newline

\noindent  (i)~We set 
    \[
    x=\frac 1{m\kappa},\quad y=\frac{\mathcal{D}(\Omega^0)+\mathcal{D}(\mathcal{V})}{\kappa},\quad \lambda=1+\frac 9{16}x+\frac 12y.
    \]
    Then $\eqref{eq:2nd-deriv-initial-power-bound}_1$ is equivalent to
    \begin{equation}\label{eq:2nd-deriv-initial-power-bound-equiv}
        |\theta^{(n)}_i(m,0)-\theta^{(n)}_j(m,0)|\le 2(n-1)!\lambda^n\kappa^n,\quad n\ge 1.
    \end{equation}
  \noindent $\bullet$~Step A.1 (Initial step $(n = 1,2))$: ~For the case $n=1$, it follows from the initial condition that 
    \[
    |\dot\theta_i(m,0)-\dot\theta_j(m,0)|=|\omega_i^0-\omega_j^0|\le \mathcal{D}(\Omega^0)\le \kappa y\le 2\lambda\kappa.
    \]
    For the case $n=2$, we use  ODE of \eqref{A-1} to see
    \[
        m\ddot\theta_i(m,0)-m\ddot\theta_j(m,0)=-\omega_i^0+\omega_j^0+\nu_i-\nu_j+\frac\kappa N\sum_{l=1}^N \Big(\sin(\theta_l^0-\theta_i^0)-\sin(\theta_l^0-\theta_j^0) \Big ).
    \]
 This implies 
\begin{align*}
|\ddot\theta_i(m,0)-\ddot\theta_j(m,0)|&\le \frac 1m\left(2\kappa+\mathcal{D}(\Omega^0)+\mathcal{D}(\mathcal{V})\right)
        =2\kappa^2 x(1+\frac y2)\le 2\lambda^2\kappa^2.
\end{align*}
\vspace{0.2cm}

\noindent $\bullet$~Step A.2 (inductive step):~By induction, we assume that \eqref{eq:2nd-deriv-initial-power-bound-equiv} is true for $1\le n\le k+1$ and $k \geq 1$. We differentiate \eqref{A-1} $k$ times to obtain by Faa di Bruno's formula:
    \begin{align}\label{eq:2nd-FdB-zero}
    \begin{aligned}
&m\theta^{(k+2)}_i(m,0)-m\theta^{(k+2)}_j(m,0)+\theta^{(k+1)}_i(m,0)-\theta^{(k+1)}_j(m,0)\\
& \hspace{0.2cm} =\frac{\kappa}{N}\sum_{p=1}^N\sum_{\substack{m_1,\cdots,m_k\in\mathbb{Z}_{\ge 0}\\ \sum_{\ell=1}^k \ell m_\ell =k}} k! \sin^{(\sum_{l=1}^km_l)}(\theta_p(m,0)-\theta_i(m,0)) \prod_{l=1}^k \frac{1}{m_l!}\left(\frac{ \theta_p^{(l)}(m,0) -\theta_i^{(l)}(m,0)}{l!}\right)^{m_l}\\
&\hspace{0.4cm}~-\frac{\kappa}{N}\sum_{p=1}^N\sum_{\substack{m_1,\cdots,m_k\in\mathbb{Z}_{\ge 0}\\ \sum_{\ell=1}^k \ell m_\ell =k}} k! \sin^{(\sum_{l=1}^km_l)}(\theta_p(m,0)-\theta_j(m,0)) \prod_{l=1}^k \frac{1}{m_l!}\left(\frac{ \theta_p^{(l)}(m,0) -\theta_j^{(l)}(m,0)}{l!}\right)^{m_l}.
\end{aligned}
\end{align}
Now, we estimate using the induction hypothesis \eqref{eq:2nd-deriv-initial-power-bound-equiv} and $\sum_{l=1}^klm_l=k$ to find 
\[
    \prod_{l=1}^k \frac{1}{m_l!}\left(\frac{ |\theta_p^{(l)}(m,0) -\theta_i^{(l)}(m,0)|}{l!}\right)^{m_l}\le \prod_{l=1}^k \frac{1}{m_l!}\left(\frac{ 2\lambda^l\kappa^l}{l}\right)^{m_l}=\lambda^k\kappa^k\prod_{l=1}^k \frac{1}{m_l!}\frac{ 2^{m_l}}{l^{m_l}}.
\]
Likewise, we have
\[
    \prod_{l=1}^k \frac{1}{m_l!}\left(\frac{ |\theta_p^{(l)}(m,0) -\theta_j^{(l)}(m,0)|}{l!}\right)^{m_l}\le \lambda^k\kappa^k\prod_{l=1}^k \frac{1}{m_l!}\frac{ 2^{m_l}}{l^{m_l}}.
\]
Putting this back into \eqref{eq:2nd-FdB-zero}, along with the induction hypothesis \eqref{eq:2nd-deriv-initial-power-bound-equiv} for $k+1$ and $|\sin^{(\sum_l m_l) (\cdot)}|\le 1$, we have
\begin{align*}
&|\theta^{(k+2)}_i(m,0)-\theta^{(k+2)}_j(m,0)|\\
& \hspace{0.5cm} \le \frac 1m|\theta^{(k+1)}_i(m,0)-\theta^{(k+1)}_j(m,0)|+\frac{2k!\kappa}{mN}\sum_{p=1}^N\sum_{\substack{m_1,\cdots,m_k\in\mathbb{Z}_{\ge 0}\\ \sum_{\ell=1}^k \ell m_\ell =k}}  \lambda^k\kappa^k\prod_{l=1}^k \frac{1}{m_l!}\frac{ 2^{m_l}}{l^{m_l}}\\
&  \hspace{0.5cm} \stackrel{\mathclap{\mathrm{Lemma~}\ref{lem:FdB-probmass}}}{\le} \qquad\frac 2m k! \lambda^{k+1}\kappa^{k+1}+\frac2m (k+1)!\lambda^k\kappa^{k+1}= 2 k!  x\lambda^{k+1}\kappa^{k+2}+2 (k+1)! x\lambda^k\kappa^{k+2}\\
&  \hspace{0.5cm} =2(k+1)! x \lambda^k\kappa^{k+2}(\frac{\lambda}{k+1}+1)\le 2(k+1)! \lambda^{k}\kappa^{k+2} x(\frac \lambda 2 +1)\\
&  \hspace{0.5cm} \le 2(k+1)! \lambda^{k+2}\kappa^{k+2},
\end{align*}
where in the last inequality we used
\begin{align*}
x(\frac \lambda 2 +1)&=x(\frac 32+\frac{9x}{32}+\frac y4)=\frac {3x}2+\frac{xy}{4}+\frac{9x^2}{32}\le \frac {3x}2+\frac{xy}{4}+\frac{9x^2}{32}+(1-\frac{3x}{16})^2\\
&\le 1+\frac {9x}8+y+\frac {81x^2}{256}+\frac {9xy}{16}+\frac{y^2}{4}= (1+\frac {9x}{16}+\frac y2)^2=\lambda^2.
\end{align*}
This closes the induction and completes the proof of \eqref{eq:2nd-deriv-initial-power-bound-equiv} and equivalently $\eqref{eq:2nd-deriv-initial-power-bound}_1$. \newline

\noindent (ii)~Again, we verify $\eqref{eq:2nd-deriv-initial-power-bound}_2$ by induction. For this, we set 
    \[
    {\tilde y} :=\frac{2\|\Omega^0\|_\infty+2\|\mathcal{V}\|_\infty}{\kappa}\ge y, \quad  {\tilde \lambda} =1+\frac 9{16}x+\frac 12 {\tilde y} \ge \lambda,
    \]
where the inequality follows from $\mathcal{D}(A)\le 2\|A\|_\infty$.  \newline

\noindent Then $\eqref{eq:2nd-deriv-initial-power-bound}_2$ is equivalent to
    \begin{equation}\label{eq:2nd-deriv-abs-initial-power-bound-equiv}
        |\theta^{(n)}_i(m,0)-\theta^{(n)}_j(m,0)|\le (n-1)! {\tilde \lambda}^n\kappa^n,\quad n\ge 1.
    \end{equation}
\noindent $\bullet$~Step B.1 (Initial step $(n = 1, 2))$:~For the case $n=1$, it follows from the initial condition that 
    \[
    |\dot\theta_i(m,0)|=|\omega_i^0|\le \|\Omega^0\|_\infty \le \frac 12\kappa {\tilde y} \le {\tilde \lambda} \kappa.
    \]
For the case $n=2$, we use ODE of \eqref{A-1} to find 
    \[
        m\ddot\theta_i(m,0)=-\omega_i^0+\nu_i+\frac\kappa N\sum_{l=1}^N\sin(\theta_l(m,0)-\theta_i(m,0)).
    \]
 This yields
\begin{align*}
        |\ddot\theta_i(m,0)|&\le \frac 1m\left(\kappa+\|\Omega^0\|_\infty+\|\mathcal{V}\|_\infty)\right) =\kappa^2 x \Big (1+\frac {{\tilde y}}2 \Big )\le {\tilde \lambda}^2\kappa^2.
\end{align*}
\vspace{0.2cm}

\noindent $\bullet$~Step B.2 (Inductive step:~By induction, assume \eqref{eq:2nd-deriv-abs-initial-power-bound-equiv} is true for $1\le n\le k+1$ and $k\ge 1$. We differentiate \eqref{A-1} $k$ times to obtain by Faa di Bruno's formula:
\begin{align*}
    \begin{aligned}
&m\theta^{(k+2)}_i(m,0)+\theta^{(k+1)}_i(m,0)\\
& \hspace{0.5cm} =\frac{\kappa}{N}\sum_{p=1}^N\sum_{\substack{m_1,\cdots,m_k\in\mathbb{Z}_{\ge 0}\\ \sum_{\ell=1}^k \ell m_\ell =k}} k! \sin^{(\sum_{l=1}^km_l)}(\theta_p(m,0)-\theta_i(m,0)) \prod_{l=1}^k \frac{1}{m_l!}\left(\frac{ \theta_p^{(l)}(m,0) -\theta_i^{(l)}(m,0)}{l!}\right)^{m_l}.
\end{aligned}
\end{align*}
Then, we use this and \eqref{eq:2nd-deriv-abs-initial-power-bound-equiv} to close the induction:
\begin{align*}
\begin{aligned}
|\theta^{(k+2)}_i(m,0)| &\le \frac 1m|\theta^{(k+1)}_i(m,0)|+\frac{k!\kappa}{mN}\sum_{p=1}^N\sum_{\substack{m_1,\cdots,m_k\in\mathbb{Z}_{\ge 0}\\ \sum_{\ell=1}^k \ell m_\ell =k}}  {\tilde \lambda}^k\kappa^k\prod_{l=1}^k \frac{1}{m_l!}\frac{ 2^{m_l}}{l^{m_l}}\\
&\stackrel{\mathclap{\mathrm{Lemma~}\ref{lem:FdB-probmass}}}{\le}\qquad \frac 1m k! {\tilde \lambda}^{k+1}\kappa^{k+1}+\frac1m (k+1)!{\tilde \lambda}^k\kappa^{k+1}=  k!  x{\tilde \lambda}^{k+1}\kappa^{k+2}+ (k+1)! x{\tilde \lambda}^k\kappa^{k+2}\\
&=(k+1)! x {\tilde \lambda}^k\kappa^{k+2} \Big (\frac{{\tilde \lambda}}{k+1}+1 \Big )\le (k+1)! {\tilde \lambda}^{k}\kappa^{k+2} x \Big (\frac {{\tilde \lambda}} 2 +1 \Big)\\
&\le (k+1)! {\tilde \lambda}^{k+2}\kappa^{k+2}.
\end{aligned}
\end{align*}
\end{proof}
\begin{remark} In the sequel, we give several comments on the results of Lemma \eqref{lem:2nd-deriv-initial-bound}. 
\begin{enumerate}
    \item A slight inspection of the proof reveals that we have the following stronger estimate with better dependency on $m$:
    \begin{align*}
    &|\theta^{(n)}_i(m,0)-\theta^{(n)}_j(m,0)|\\
    & \hspace{0.5cm} \le
        2(n-1)!\left(\kappa+\frac{\mathcal{D}(\Omega^0)+\mathcal{D}(\mathcal{V})}2\right)\left(\frac{\mathcal{D}(\Omega^0)+\mathcal{D}(\mathcal{V})}2+\frac{ 1}{m}\right) \left(\kappa+\frac{\mathcal{D}(\Omega^0)+\mathcal{D}(\mathcal{V})}2+\frac 9{16m} \right)^{n-2}.
    \end{align*}
    Indeed, we define
\[
    \mu=\frac{(1+\frac y2)(x+\frac y2)}{(1+\frac 9{16}x+\frac y2)^2}=\frac{x+\frac y2+\frac{xy}{2}+\frac{y^2}{4}}{(1+\frac 9{16}x+\frac y2)^2}<1,
    \]
    and set the induction hypothesis:
    \[
    |\theta^{(n)}_i(m,0)-\theta^{(n)}_j(m,0)|\le 2(n-1)!\mu\lambda^n\kappa^n.
    \]
    One may verify this for $n=1$ and $n=2$. In Induction step, the only material modification is in the following estimate using $\sum_{l=1}^k m_l\ge 1$:
    \[
    \prod_{l=1}^k \frac{1}{m_l!}\left(\frac{ |\theta_p^{(l)}(m,0) -\theta_i^{(l)}(m,0)|}{l!}\right)^{m_l}\le \prod_{l=1}^k \frac{1}{m_l!}\left(\frac{ 2\mu\lambda^l\kappa^l}{l}\right)^{m_l}\le \mu\lambda^k\kappa^k\prod_{l=1}^k \frac{1}{m_l!}\frac{ 2^{m_l}}{l^{m_l}}.
    \]
    However, this estimate is also quite not sharp, since we are throwing away potentially very high powers of $\mu$. The estimates in the last two Lemmas are not intended to be optimal. \newline
    \vspace{0.2cm}
    \item A similar argument can be made to show that for all $t\ge 0$,
    \[
    |\theta^{(n)}_i(m,t)-\theta^{(n)}_j(m,t)|\le
        2(n-1)!\left(\kappa+\frac{\mathcal{D}(\Omega^0)+\mathcal{D}(\mathcal{V})}2+\frac 1m\right)^{n}.
    \]
    The proof proceeds the same, but with the weaker base case for $n=1$. Again, with the $\mu$-trick, one may obtain the stronger bound
    \begin{align*}
    &|\theta^{(n)}_i(m,t)-\theta^{(n)}_j(m,t)|\\
    & \hspace{0.5cm} \le
        2(n-1)!\left(\kappa+\frac{\mathcal{D}(\Omega^0)+\mathcal{D}(\mathcal{V})}2\right)\left(\kappa+\frac{\mathcal{D}(\Omega^0)+\mathcal{D}(\mathcal{V})}2+\frac 2m\right) \left(\kappa+\frac{\mathcal{D}(\Omega^0)+\mathcal{D}(\mathcal{V})}2+\frac 1m\right)^{n-2}.
    \end{align*}
    Although this type of bound, with dominant term $\frac 1{m^{n-1}}$ as $m\to 0$, is true for all $t\ge 0$, it will be most useful at $t=0$, since for larger $t$ the term $\frac 1{m^{n-1}}$ will decay with a factor of $e^{-t/m}$ by the Duhamel formula, as will be seen later in Lemma \ref{lem:2nd-deriv-bound}. \newline
\vspace{0.2cm}
\item
Lemmas \ref{lem:1st-deriv-bound} and \ref{lem:2nd-deriv-bound} tell us that $\theta_i(0,t)$, $\theta_i(m,t)$ have uniform lower bounds on their radius of convergence and hence admit holomorphic extensions as functions of $t$, namely $\theta_i(0,t)$ may be considered a holomorphic function on $\{t:\Re(t)>0,~|\Im(t)|<\frac{1}{2\kappa+\mathcal{D}(\mathcal{V})}\}$,\footnote{A careful complex analytic examination tells us that we may take the slightly larger domain $|\Im (t)|<\frac{1}{\mathcal{D}(\mathcal{V})}\log\left(1+\frac{\mathcal{D}(\mathcal{V})}{2\kappa}\right)$ if $\mathcal{D}(\mathcal{V})>0$.} and $\theta_i(m,t)$ may be considered a holomorphic function on $\{t:\Re(t)>0,~|\Im(t)|<\frac{2}{2\kappa+\mathcal{D}(\mathcal{V})+\mathcal{D}(\Omega^0)+(9/8m)}\}$. Although the latter bound is not optimal, it is generically necessary for $\theta^{(n)}_i(m,t)$ to grow like $m^{1-n}$, so the maximal strip to which $\theta_i(m,t)$ may be extended would have height on the order of $O(m)$ as $m\to 0$. We find this discrepancy curious: after all, $\theta_i(m,t)$ should converge to $\theta_i(0,t)$, but why does the domain of $\theta_i(m,t)$ contract onto the positive real line? \newline

By \cite[Section 2, p.26]{V63}, $\theta_i(m,t)$ admits a Maclaurin expansion in terms of $m$ for a fixed positive integer $M$:
\[
\theta_i(m,t)=\theta_i(0,t)+\sum_{n=1}^M \frac{m^n}{n!} \theta^n_{i}(t)+R^{M+1}_i(m,t),
\]
with $R^{M+1}_i(m,t)=O(m^{M+1})$ as $m\to 0$ uniformly on compact subsets of the $t$-interval $(0,\infty)$, but we do not necessarily have $\lim_{M\to\infty}R^{M+1}_i(m,t)=0$. Here, $\theta^n_i(t)$ is given as the solution of the ODE obtained by differentiating $n$-times in $m$ the ODE of \eqref{A-1} and then substituting $m=0$:
\begin{align*}
&n\frac{\partial^2}{\partial t^2}\theta^{n-1}_i(t)+\frac{\partial}{\partial t}\theta^n_i(t)\\
& \hspace{0.4cm} =\frac{\kappa n!}{N}\sum_{p=1}^N\sum_{\substack{m_1,\cdots,m_{n-1}\in\mathbb{Z}_{\ge 0}\\ \sum_{\ell=1}^{n-1} \ell m_\ell =n}}  \sin^{(\sum_{\ell=1}^{n-1}m_\ell)}(\theta_p(0,t)-\theta_i(0,t)) \prod_{l=1}^{n-1} \frac{1}{m_l!}\left(\frac{ \theta_p^{l}(0,t) -\theta_i^{l}(0,t)}{l!}\right)^{m_l}\\
& \hspace{0.4cm} +\frac{\kappa }{N}\sum_{p=1}^N\cos(\theta_p(0,t)-\theta_i(0,t)) \left(\theta_p^{n}(t) -\theta_i^{n}(t)\right),
\end{align*}
with initial condition $\theta_i^n(0)=0$. Inductively, this is a linear ODE in $\theta_i^n(t)$, and hence $\theta_i^n(t)$ has the same $t$-domain as $\theta_i(0,t)$. It seems that the proof methods of Lemmas \ref{lem:1st-deriv-bound}, \ref{lem:2nd-deriv-initial-bound}, and \ref{lem:2nd-deriv-bound} should give
\[
\left|\frac{\partial^l}{\partial t^l}\theta^n_i(t)\right|\le c(2\kappa+\mathcal{D}(\mathcal{V}))^{n+l}e^{2\kappa t},\quad t\ge 0,
\]
for a constant $c>0$, but we have not checked the details. If so, the infinite series
\[
\tilde\theta_i(m,t)=\theta_i(0,t)+\sum_{n=1}^\infty \frac{m^n}{n!} \theta^n_{i}(t)
\]
would converge for $m<\frac{1}{2\kappa+\mathcal{D}(\mathcal{V})}$, and may even be well-defined on a $t$-half strip:
\[ \{t:\Re(t)>0,~|\Im(t)|<c'(m)\}, \]
where $c'(m)<\frac{1}{2\kappa+\mathcal{D}(\mathcal{V})}$ depends on $m$, with $\lim_{m\to 0} c'(m)=\frac{1}{2\kappa+\mathcal{D}(\mathcal{V})}$. This $\tilde\theta_i$ is the solution to \eqref{A-1} with
\[
\tilde\omega_i^0=\nu_i+\frac\kappa N\sum_{j=1}^N\sin(\theta_j^0-\theta_i^0).
\]

We may as well write
\[
\theta_i(m,t)=\tilde\theta_i(m,t)+R(m,t).
\]
Our analysis suggests the domain of $\theta_i(m,t)$ shrinks to the positive real line as $m\to0$ because the same is true for $R(m,t)$. Considering the decay term $e^{-t/m}$ in the estimates for $\theta_i^{(n)}(m,t)$, we ask whether $R(m,t)$ admits an analytic expansion in terms of $e^{-\alpha t/m}$ and $e^{-\bar{\alpha} t/m}$, with $\Re\alpha=1$. If so, this would explain the $O(m)$-width of the holomorphic strip domain of $\theta_i(m,t)$. Answering the questions posed in this remark seems an interesting future research direction.
\end{enumerate}
\end{remark}
\vspace{0.2cm}

Next, we will obtain better bounds on $|\theta_i^{{n}}(m,t)-\theta_j^{(n)}(m,t)|$, with terms that are polynomial in $\frac 1m$ decaying in time as $e^{-t/m}$. But first, we need a lemma.
\begin{lemma}\label{lem:weighted-powers}
    Let $p\in[0,1]$, $a,b\ge 0$, $n,m\in \mathbb{Z}_{\ge 0}$. Then, we have
    \[
    (pa^n+(1-p)b^n)(pa^m+(1-p)b^m)\le (pa^{n+m}+(1-p)b^{n+m}).
    \]
\end{lemma}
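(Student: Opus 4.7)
The plan is to prove this by direct algebraic expansion and factorization, exploiting the fact that the only non-trivial content is the classical observation that $(a^n-b^n)$ and $(a^m-b^m)$ have the same sign when $a,b\ge 0$ and $n,m\ge 0$.

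First, I would expand the left-hand side as
\[
(pa^n+(1-p)b^n)(pa^m+(1-p)b^m)=p^2a^{n+m}+p(1-p)(a^nb^m+a^mb^n)+(1-p)^2b^{n+m}.
\]
Then, using $p=p^2+p(1-p)$ and $1-p=(1-p)^2+p(1-p)$, I would rewrite the right-hand side as
\[
pa^{n+m}+(1-p)b^{n+m}=p^2a^{n+m}+(1-p)^2b^{n+m}+p(1-p)(a^{n+m}+b^{n+m}).
\]
Subtracting the first display from the second, all the $p^2$ and $(1-p)^2$ terms cancel, leaving
\[
pa^{n+m}+(1-p)b^{n+m}-(pa^n+(1-p)b^n)(pa^m+(1-p)b^m)=p(1-p)\bigl(a^n-b^n\bigr)\bigl(a^m-b^m\bigr).
\]

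The conclusion then follows immediately: the factor $p(1-p)$ is nonnegative for $p\in[0,1]$, and the factors $(a^n-b^n)$ and $(a^m-b^m)$ have the same sign (both nonnegative when $a\ge b\ge 0$, both nonpositive when $0\le a<b$, since $x\mapsto x^k$ is monotone on $[0,\infty)$ for $k\in\mathbb{Z}_{\ge 0}$, with the convention $0^0=1$). Hence their product is nonnegative, which is exactly the claimed inequality. There is no substantive obstacle here; the whole proof is a two-line calculation, and the only minor point worth noting is the boundary convention for $n$ or $m$ equal to zero, which is handled trivially since the corresponding factor becomes zero.
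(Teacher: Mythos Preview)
Your proof is correct and takes essentially the same approach as the paper: both compute the difference $\mathrm{RHS}-\mathrm{LHS}=p(1-p)(a^n-b^n)(a^m-b^m)\ge 0$. You have simply written out the expansion in more detail than the paper does.
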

\begin{proof} We subtract the left-hand side from the right-hand side to find 
    \[
    RHS-LHS=p(1-p)(a^n-b^n)(a^m-b^m)\ge 0.
    \]
\end{proof}

\begin{lemma}\label{lem:2nd-deriv-bound}
For $m>0$, $i,j\in [N]$, $t\ge 0$, and each integer $n\ge 1$, we have
    \begin{align*}
    &|\theta_i^{(n)}(m,t)-\theta_j^{(n)}(m,t)|\\
    & \hspace{0.5cm} \le (n-1)!\left(\Big(2\kappa+\mathcal{D}(\Omega^0)+\mathcal{D}(\mathcal{V})+\frac{9}{8m}\Big)^n\Big(1+\frac tm\Big)^n e^{-t/m}+(2\kappa+\mathcal{D}(\mathcal{V}))^n(1-e^{-t/m})\right).
    \end{align*}
\end{lemma}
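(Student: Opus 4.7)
The plan is to proceed by induction on $n$, with the abbreviations $\mathcal{D}_n(t) := \max_{i,j\in [N]}|\theta^{(n)}_i(m,t) - \theta^{(n)}_j(m,t)|$, $B := 2\kappa + \mathcal{D}(\Omega^0) + \mathcal{D}(\mathcal{V}) + \frac{9}{8m}$, and $C := 2\kappa + \mathcal{D}(\mathcal{V})$. The base case $n=1$ is immediate from Lemma~\ref{L2.2}(3), which gives $\mathcal{D}_1(t) \leq \mathcal{D}(\Omega^0)e^{-t/m} + C(1-e^{-t/m}) \leq B(1+t/m)e^{-t/m} + C(1-e^{-t/m})$, using $\mathcal{D}(\Omega^0) \leq B$.

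For the inductive step $n \to n+1$ (with $n \geq 1$), I would differentiate \eqref{A-1} $n$ times to obtain the first-order linear ODE
\[
m\frac{d}{dt}\theta^{(n+1)}_i + \theta^{(n+1)}_i = G^{(n)}_i(t), \qquad G_i(t) := \frac{\kappa}{N}\sum_{p=1}^N \sin(\theta_p(m,t) - \theta_i(m,t)),
\]
where the $\nu_i$ contribution has vanished after one derivative. Duhamel's principle applied to the differenced equation yields
\[
\theta^{(n+1)}_i(t) - \theta^{(n+1)}_j(t) = [\theta^{(n+1)}_i(0) - \theta^{(n+1)}_j(0)]e^{-t/m} + \frac{1}{m}\int_0^t e^{-(t-s)/m}[G^{(n)}_i(s) - G^{(n)}_j(s)]\,ds,
\]
and Lemma~\ref{lem:2nd-deriv-initial-bound} bounds the initial value by $|\theta^{(n+1)}_i(0) - \theta^{(n+1)}_j(0)| \leq n! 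B^{n+1}/2^n$ after substitution of the abbreviations.

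The main work is estimating $|G^{(n)}_i(s) - G^{(n)}_j(s)|$ using the inductive hypothesis on $\mathcal{D}_1,\ldots,\mathcal{D}_n$. Expanding $G^{(n)}_i$ and $G^{(n)}_j$ separately by Faa di Bruno's formula, applying the triangle inequality, and using $|\sin^{(k)}| \leq 1$, one arrives at
\[
|G^{(n)}_i(s) - G^{(n)}_j(s)| \leq 2\kappa\, n! \sum_{\substack{m_1,\ldots,m_n \geq 0 \\ \sum_{l=1}^n l\, m_l = n}}\prod_{l=1}^n \frac{1}{m_l!}\left(\frac{\mathcal{D}_l(s)}{l!}\right)^{m_l}.
\]
The crucial algebraic manipulation is to rewrite $\mathcal{D}_l(s)/l! = \tfrac{1}{l}\bigl[p_s\,(B(1+s/m))^l + (1-p_s)\,C^l\bigr]$ with $p_s := e^{-s/m}$. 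Jensen's inequality controls the $m_l$-th power of this convex combination, and iterated application of Lemma~\ref{lem:weighted-powers} (with $a = B(1+s/m)$, $b = C$) collapses the product over $l$ into a single convex combination $p_s B^n(1+s/m)^n + (1-p_s)C^n$, since $\sum l\, m_l = n$. Combined with Lemma~\ref{lem:FdB-probmass} in the case $\alpha=1$, which sums the weights to $\sum_{\sum l m_l = n}\prod_l \frac{1}{m_l!\, l^{m_l}} = 1$, one obtains the clean estimate
\[
|G^{(n)}_i(s) - G^{(n)}_j(s)| \leq 2\kappa\, n!\,[e^{-s/m} B^n(1+s/m)^n + (1-e^{-s/m})C^n].
\]

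The final step is explicit integration: one computes $\frac{1}{m}\int_0^t e^{-(t-s)/m}e^{-s/m}(1+s/m)^n\, ds = \frac{e^{-t/m}}{n+1}[(1+t/m)^{n+1} - 1] \leq \frac{(1+t/m)^{n+1}}{n+1}e^{-t/m}$ and $\frac{1}{m}\int_0^t e^{-(t-s)/m}(1-e^{-s/m})\, ds = 1 - (1+t/m)e^{-t/m} \leq 1 - e^{-t/m}$, then collects all terms. The induction closes provided one verifies the coefficient inequalities $\frac{1}{2^n} + \frac{2\kappa}{(n+1)B} \leq 1$ and $2\kappa C^n \leq C^{n+1}$; both hold because $B \geq 2\kappa$ gives $\frac{2\kappa}{(n+1)B} \leq \frac{1}{n+1} \leq \frac{1}{2}$ and $\frac{1}{2^n} \leq \frac{1}{2}$ for $n \geq 1$, while the second is $C \geq 2\kappa$. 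I expect this coefficient-closing check to be the most delicate part of the argument; the constant $\frac{9}{8m}$ in the definition of $B$ appears to have been engineered in Lemma~\ref{lem:2nd-deriv-initial-bound} precisely so that the initial-value term contributes a factor $1/2^n$, leaving enough slack for the $\frac{2\kappa}{(n+1)B}$ term from the Duhamel integral.
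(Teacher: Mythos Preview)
Your proposal is correct and follows essentially the same route as the paper's proof: induction on $n$, base case from Lemma~\ref{L2.2}(3), inductive step via Fa\`a di Bruno applied to the $n$-fold differentiated equation, collapse of the product using Lemma~\ref{lem:weighted-powers} together with Lemma~\ref{lem:FdB-probmass} (case $\alpha=1$), and then Duhamel with the initial bound from Lemma~\ref{lem:2nd-deriv-initial-bound}. The only cosmetic difference is that the paper absorbs the factor $2\kappa$ into $B^{k+1}$ and $C^{k+1}$ \emph{before} integrating, whereas you keep it separate and verify the coefficient inequality $\tfrac{1}{2^n}+\tfrac{2\kappa}{(n+1)B}\le 1$ afterward; both closures are equivalent.
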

\begin{proof} We prove by induction. The base case $n=1$ follows from Lemma \ref{L2.2}. Assume this is true for $1\le n\le k$. Then by Lemma \ref{lem:weighted-powers},
\begin{align*}
&\prod_{l=1}^k \frac{1}{m_l!}\left(\frac{ |\theta_p^{(l)}(m,t) -\theta_i^{(l)}(m,t)|}{l!}\right)^{m_l}\\
& \hspace{0.2cm} \le \left((2\kappa+\mathcal{D}(\Omega^0)+\mathcal{D}(\mathcal{V})+\frac{9}{8m})^k(1+\frac tm)^k e^{-t/m}+(2\kappa+\mathcal{D}(\mathcal{V}))^k(1-e^{-t/m})\right)\prod_{l=1}^k\frac{1}{m_l!l^{m_l}}.
\end{align*}
Then, it follows from Faa di Bruno's formula that 
\begin{align*}
&m\theta^{(k+2)}_i(m,t)-m\theta^{(k+2)}_j(m,t)+\theta^{(k+1)}_i(m,t)-\theta^{(k+1)}_j(m,t)\\
& \hspace{0.5cm} =\frac{\kappa}{N}\sum_{p=1}^N\sum_{\substack{m_1,\cdots,m_k\in\mathbb{Z}_{\ge 0}\\ \sum_{\ell=1}^k \ell m_\ell =k}} k! \sin^{(\sum_{l=1}^km_l)}(\theta_p(m,t)-\theta_i(m,t)) \prod_{l=1}^k \frac{1}{m_l!}\left(\frac{ \theta_p^{(l)}(m,t) -\theta_i^{(l)}(m,t)}{l!}\right)^{m_l}\\
&\hspace{0.7cm} -\frac{\kappa}{N}\sum_{p=1}^N\sum_{\substack{m_1,\cdots,m_k\in\mathbb{Z}_{\ge 0}\\ \sum_{\ell=1}^k \ell m_\ell =k}} k! \sin^{(\sum_{l=1}^km_l)}(\theta_p(m,t)-\theta_j(m,t)) \prod_{l=1}^k \frac{1}{m_l!}\left(\frac{ \theta_p^{(l)}(m,t) -\theta_j^{(l)}(m,t)}{l!}\right)^{m_l}.
\end{align*}
We use Lemma \ref{lem:FdB-probmass} to see
\begin{align*}
    &|m\theta^{(k+2)}_i(m,t)-m\theta^{(k+2)}_j(m,t)+\theta^{(k+1)}_i(m,t)-\theta^{(k+1)}_j(m,t)|\\
    & \hspace{0.5cm}  \le 2\kappa k!\left((2\kappa+\mathcal{D}(\Omega^0)+\mathcal{D}(\mathcal{V})+\frac{9}{8m})^k(1+\frac tm)^k e^{-t/m}+(2\kappa+\mathcal{D}(\mathcal{V}))^k(1-e^{-t/m})\right)\\
    &  \hspace{0.5cm}  \le k!\left((2\kappa+\mathcal{D}(\Omega^0)+\mathcal{D}(\mathcal{V})+\frac{9}{8m})^{k+1}(1+\frac tm)^k e^{-t/m}+(2\kappa+\mathcal{D}(\mathcal{V}))^{k+1}\right).
\end{align*}
Finally, we invoke the Duhamel principle and Lemma \ref{lem:2nd-deriv-initial-bound} to see
\begin{align*}
    &|\theta^{(k+1)}_i(m,t)-\theta^{(k+1)}_j(m,t)|\\
    &  \hspace{0.5cm}  \le |\theta^{(k+1)}_i(m,0)-\theta^{(k+1)}_j(m,0)|e^{-t/m}\\
    &  \hspace{0.7cm} +\frac 1m \int_0^tk!\left((2\kappa+\mathcal{D}(\Omega^0)+\mathcal{D}(\mathcal{V})+\frac{9}{8m})^{k+1}(1+\frac sm)^k e^{-s/m}+(2\kappa+\mathcal{D}(\mathcal{V}))^{k+1}\right)e^{-(t-s)/m}ds\\
    &  \hspace{0.5cm} \le 2^{-k}k!(2\kappa+\mathcal{D}(\Omega^0)+\mathcal{D}(\mathcal{V})+\frac{9}{8m})^{k+1}e^{-t/m}\\
    &  \hspace{0.7cm}  +k!(2\kappa+\mathcal{D}(\Omega^0)+\mathcal{D}(\mathcal{V})+\frac{9}{8m})^{k+1}e^{-t/m}\cdot \frac{1}{k+1}\left((1+\frac tm)^{k+1}-1\right)\\
    &  \hspace{0.7cm} + k!(2\kappa+\mathcal{D}(\mathcal{V}))^{k+1}(1-e^{-t/m})\\
    &  \hspace{0.5cm} \le k! \Big (2\kappa+\mathcal{D}(\Omega^0)+\mathcal{D}(\mathcal{V})+\frac{9}{8m} \Big)^{k+1} \Big (1+\frac tm \Big)^{k+1}e^{-t/m}+k!(2\kappa+\mathcal{D}(\mathcal{V}))^{k+1}(1-e^{-t/m}),
\end{align*}
which close the induction. 
\end{proof}
We combine Lemma \ref{lem:1st-deriv-bound} and Lemma \ref{lem:2nd-deriv-bound} to see that for all $i,j\in [N]$, $n\in \mathbb{Z}_{>0}$, $m>0$, and $t\ge 0$,
    \begin{align}\label{sum-lem:deriv-bound}
    \begin{aligned}
    &|\theta_i^{(n)}(0,t)-\theta_j^{(n)}(0,t)|,~|\theta_i^{(n)}(m,t)-\theta_j^{(n)}(m,t)|\\
    & \hspace{0.2cm} \le (n-1)!\left( \Big (2\kappa+\mathcal{D}(\Omega^0)+\mathcal{D}(\mathcal{V})+\frac{9}{8m} \Big )^n \Big ( 1+\frac tm \Big)^n e^{-t/m}+(2\kappa+\mathcal{D}(\mathcal{V}))^n(1-e^{-t/m})\right)\\
    & \hspace{0.2cm} \le  (n-1)!\Big(2\kappa+\mathcal{D}(\Omega^0)+\mathcal{D}(\mathcal{V})+\frac{9}{8m} \Big )^n \Big (1+\frac tm \Big )^n.
    \end{aligned}
    \end{align}
    We are to show that
    \begin{align*}
    &|\theta_i^{(n)}(0,t)-\theta_j^{(n)}(0,t)-\theta_i^{(n)}(m,t)+\theta_j^{(n)}(m,t)|\\
    & \hspace{0.5cm} \le 2(n-1)!(2\kappa+\mathcal{D}(\Omega^0)+\mathcal{D}(\mathcal{V})+\frac{9}{8m})^n(1+\frac tm)^n e^{-t/m}+\frac 32(n+1)!m\kappa e^{2\kappa t}\\
    & \hspace{0.5cm} \times  \left((2\kappa+\mathcal{D}(\Omega^0)+\mathcal{D}(\mathcal{V})+\frac{9}{8m})^n(1+\frac tm)^n e^{-t/m}+(2\kappa+\mathcal{D}(\Omega^0)+\mathcal{D}(\mathcal{V}))^n(1-e^{-t/m})\right).
    \end{align*}
    By Faa di Bruno's formula, we have
\begin{align}\label{eq:FdB-gigantic}
    \begin{aligned}
&m\theta^{(k+2)}_i(m,t)-m\theta^{(k+2)}_j(m,t) +\theta^{(k+1)}_i(m,t)-\theta^{(k+1)}_j(m,t)-\theta^{(k+1)}_i(0,t)+\theta^{(k+1)}_j(0,t)\\
&=\frac{\kappa}{N}\sum_{p=1}^N\sum_{\substack{m_1,\cdots,m_k\in\mathbb{Z}_{\ge 0}\\ \sum_{\ell=1}^k \ell m_\ell =k}} k! \sin^{(\sum_{l=1}^km_l)}(\theta_p(m,t)-\theta_i(m,t)) \prod_{l=1}^k \frac{1}{m_l!}\left(\frac{ \theta_p^{(l)}(m,t) -\theta_i^{(l)}(m,t)}{l!}\right)^{m_l}\\
&-\frac{\kappa}{N}\sum_{p=1}^N\sum_{\substack{m_1,\cdots,m_k\in\mathbb{Z}_{\ge 0}\\ \sum_{\ell=1}^k \ell m_\ell =k}} k! \sin^{(\sum_{l=1}^km_l)}(\theta_p(0,t)-\theta_i(0,t)) \prod_{l=1}^k \frac{1}{m_l!}\left(\frac{ \theta_p^{(l)}(0,t) -\theta_i^{(l)}(0,t)}{l!}\right)^{m_l}\\
&-\frac{\kappa}{N}\sum_{p=1}^N\sum_{\substack{m_1,\cdots,m_k\in\mathbb{Z}_{\ge 0}\\ \sum_{\ell=1}^k \ell m_\ell =k}} k! \sin^{(\sum_{l=1}^km_l)}(\theta_p(m,t)-\theta_j(m,t)) \prod_{l=1}^k \frac{1}{m_l!}\left(\frac{ \theta_p^{(l)}(m,t) -\theta_j^{(l)}(m,t)}{l!}\right)^{m_l}\\
&+\frac{\kappa}{N}\sum_{p=1}^N\sum_{\substack{m_1,\cdots,m_k\in\mathbb{Z}_{\ge 0}\\ \sum_{\ell=1}^k \ell m_\ell =k}} k! \sin^{(\sum_{l=1}^km_l)}(\theta_p(0,t)-\theta_j(0,t)) \prod_{l=1}^k \frac{1}{m_l!}\left(\frac{ \theta_p^{(l)}(0,t) -\theta_j^{(l)}(0,t)}{l!}\right)^{m_l}.
\end{aligned}
\end{align}
\begin{lemma}\label{lem:product-diff}
If $a_\alpha ,b_\alpha ,c_\alpha ,d_\alpha ,\varepsilon_\alpha ,\delta_\alpha \ge 0$, $\alpha \in [n]$, satisfy
\[
\max\{a_\alpha ,b_\alpha \}\le \min\{c_\alpha ,d_\alpha \}, \quad |a_\alpha -b_\alpha |\le \varepsilon_\alpha  c_\alpha  +\delta_\alpha  d_\alpha ,
\]
then we have
\[
\left|a_1\cdots a_n-b_1\cdots b_n\right|\le \left(\sum_{i=1}^n\varepsilon_i\right)c_1\cdots c_n+\left(\sum_{i=1}^n\delta_i\right)d_1\cdots d_n.
\]
\end{lemma}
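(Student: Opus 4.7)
The plan is to use the standard telescoping identity for the difference of two products. Writing
\[
a_1\cdots a_n - b_1 \cdots b_n \;=\; \sum_{k=1}^n a_1\cdots a_{k-1}(a_k-b_k)b_{k+1}\cdots b_n,
\]
the triangle inequality immediately gives
\[
\left|a_1\cdots a_n-b_1\cdots b_n\right| \;\le\; \sum_{k=1}^n a_1\cdots a_{k-1}\,|a_k-b_k|\,b_{k+1}\cdots b_n,
\]
where we have used $a_\alpha,b_\alpha\ge 0$ to drop absolute values on the outer factors.

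Next I would apply the per-coordinate hypothesis $|a_k-b_k|\le \varepsilon_k c_k+\delta_k d_k$ to split each summand into a $c$-piece and a $d$-piece:
\[
a_1\cdots a_{k-1}|a_k-b_k|b_{k+1}\cdots b_n \;\le\; \varepsilon_k\bigl(a_1\cdots a_{k-1}\bigr)c_k\bigl(b_{k+1}\cdots b_n\bigr) + \delta_k\bigl(a_1\cdots a_{k-1}\bigr)d_k\bigl(b_{k+1}\cdots b_n\bigr).
\]
The condition $\max\{a_\alpha,b_\alpha\}\le\min\{c_\alpha,d_\alpha\}$ is precisely what lets me majorize each of the $a_j$'s ($j<k$) and $b_j$'s ($j>k$) by either $c_j$ or $d_j$ freely. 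I apply the $c_j$-bound in the first piece and the $d_j$-bound in the second piece, yielding
\[
\bigl(a_1\cdots a_{k-1}\bigr)c_k\bigl(b_{k+1}\cdots b_n\bigr)\le c_1\cdots c_n, \qquad \bigl(a_1\cdots a_{k-1}\bigr)d_k\bigl(b_{k+1}\cdots b_n\bigr)\le d_1\cdots d_n.
\]

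Summing over $k\in[n]$ then produces exactly the claimed bound
\[
\left|a_1\cdots a_n-b_1\cdots b_n\right| \;\le\; \Bigl(\sum_{k=1}^n\varepsilon_k\Bigr)c_1\cdots c_n+\Bigl(\sum_{k=1}^n\delta_k\Bigr)d_1\cdots d_n.
\]
This is essentially a routine telescoping argument, so I do not anticipate any real obstacle; the only mildly delicate point is being careful to invoke the two halves of the hypothesis $\max\{a_\alpha,b_\alpha\}\le\min\{c_\alpha,d_\alpha\}$ separately, once to compare against $c_j$ and once against $d_j$, so that the $c$-piece and $d$-piece collect cleanly into $c_1\cdots c_n$ and $d_1\cdots d_n$ respectively. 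This asymmetric choice is what allows the bound to be stated as a sum of two terms rather than forcing a common majorant.
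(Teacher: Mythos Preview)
Your proof is correct and follows essentially the same approach as the paper: the paper also uses the telescoping identity $\left|a_1\cdots a_n-b_1\cdots b_n\right|\le \sum_{\alpha} a_1\cdots a_{\alpha-1}|a_\alpha-b_\alpha|b_{\alpha+1}\cdots b_n$, splits via $|a_\alpha-b_\alpha|\le \varepsilon_\alpha c_\alpha+\delta_\alpha d_\alpha$, and then bounds the remaining $a_j,b_j$ factors by $c_j$ (resp.\ $d_j$) using $\max\{a_\alpha,b_\alpha\}\le\min\{c_\alpha,d_\alpha\}$.
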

\begin{proof} By direct calculation, we have
    \begin{align*}
        &\left|a_1\cdots a_n-b_1\cdots b_n\right|\\
        & \hspace{0.5cm} \le \sum_{\alpha =1}^n a_1\cdots a_{\alpha -1}|a_\alpha-b_\alpha|b_{\alpha+1}\cdots b_n\\
        & \hspace{0.5cm} \le \sum_{\alpha=1}^n a_1\cdots a_{\alpha-1}(\varepsilon_\alpha c_\alpha)b_{\alpha+1}\cdots b_n+\sum_{\alpha=1}^n a_1\cdots a_{\alpha-1}(\delta_\alpha d_\alpha)b_{\alpha+1}\cdots b_n\\
        &\hspace{0.5cm} \le \left(\sum_{\alpha=1}^n\varepsilon_\alpha\right)c_1\cdots c_n+\left(\sum_{\alpha=1}^n\delta_\alpha\right)d_1\cdots d_n.
    \end{align*}
\end{proof}
We may set 
\[
n=\sum_{l=1}^k m_l, \quad \left(a_\alpha\right)_{\alpha=1}^n=\left(\frac{\theta_p^{(l)}(m,t) -\theta_i^{(l)}(m,t)}{(l-1)!}\right),\quad \left(b_\alpha\right)_{\alpha=1}^n=\left(\frac{\theta_p^{(l)}(0,t) -\theta_i^{(l)}(0,t)}{(l-1)!}\right),
\]
where each index $l$ is repeated $m_l$ times. By \eqref{sum-lem:deriv-bound}, we have
\begin{align*}
\begin{aligned}
& \left(c_\alpha\right)_{\alpha=1}^n=\left((2\kappa+\mathcal{D}(\Omega^0)+\mathcal{D}(\mathcal{V})+\frac{9}{8m})^l(1+\frac tm)^l\right), \\
& \left(d_\alpha\right)_{\alpha=1}^n=\left((2\kappa+\mathcal{D}(\Omega^0)+\mathcal{D}(\mathcal{V})+\frac{9}{8m})^l(1+\frac tm)^l e^{-t/m}+(2\kappa+\mathcal{D}(\mathcal{V}))^l(1-e^{-t/m})\right), \\
& \left(\varepsilon_\alpha\right)_{\alpha=1}^n=\left(2e^{-t/m}\right),\quad \left(\delta_\alpha\right)_{\alpha=1}^n=\left(\frac 32l(k+1)m\kappa  e^{2\kappa t}\right).
\end{aligned}
\end{align*}
The index $l$ repeated $m_l$ times, satisfy the hypotheses of Lemma \ref{lem:product-diff} so that
\begin{align*}
&\left| \prod_{l=1}^k \left( \frac{\theta_p^{(l)}(m,t) - \theta_i^{(l)}(m,t)}{(l-1)!} \right)^{m_l}
-\prod_{l=1}^k \left( \frac{\theta_p^{(l)}(0,t) - \theta_i^{(l)}(0,t)}{(l-1)!} \right)^{m_l} \right| \\
& \hspace{0.5cm}  =\left|a_1\cdots a_n-b_1\cdots b_n\right|\\
&  \hspace{0.5cm} \le \left(\sum_{i=1}^n\varepsilon_i\right)c_1\cdots c_n+\left(\sum_{i=1}^n\delta_i\right)d_1\cdots d_n\\
&  \hspace{0.5cm} \le 2k c_1\cdots c_ne^{-t/m}+\frac 32k(k+1)m\kappa e^{2\kappa t} d_1\cdots d_n\\
& \hspace{0.5cm} \le 2k  \Big (2\kappa+\mathcal{D}(\Omega^0)+\mathcal{D}(\mathcal{V})+\frac{9}{8m} \Big )^k \Big (1+\frac tm \Big)^ke^{-t/m}  +\frac 32 k(k+1)m\kappa e^{2\kappa t} \\
&  \hspace{0.7cm} \times \left( \Big(2\kappa+\mathcal{D}(\Omega^0)+\mathcal{D}(\mathcal{V})+\frac{9}{8m} \Big )^k \Big (1+\frac tm \Big )^k e^{-t/m}+(2\kappa+\mathcal{D}(\mathcal{V}))^k(1-e^{-t/m})\right).
\end{align*}
We estimate 
\begin{align*}
    &\prod_{l=1}^k \frac{1}{m_l!}\left(\frac{ \theta_p^{(l)}(m,t) -\theta_i^{(l)}(m,t)}{l!}\right)^{m_l}-\prod_{l=1}^k \frac{1}{m_l!}\left(\frac{ \theta_p^{(l)}(0,t) -\theta_i^{(l)}(0,t)}{l!}\right)^{m_l}\\
    & \hspace{0.5cm}   \le 2 k \Big (2\kappa+\mathcal{D}(\Omega^0)+\mathcal{D}(\mathcal{V})+\frac{9}{8m} \Big)^k \Big (1+\frac tm \Big)^k e^{-t/m}\prod_{l=1}^k\frac{1}{m_l! l^{m_l}}\\
    & \hspace{0.7cm} +\frac 32k(k+1)m\kappa e^{2\kappa t} \Big(2\kappa+\mathcal{D}(\Omega^0)+\mathcal{D}(\mathcal{V})+\frac{9}{8m} \Big)^k \Big (1+\frac tm \Big)^k e^{-t/m}\prod_{l=1}^k\frac{1}{m_l! l^{m_l}}\\
    & \hspace{0.7cm} +\frac 32k(k+1)m\kappa e^{2\kappa t} \Big (2\kappa+\mathcal{D}(\Omega^0)+\mathcal{D}(\mathcal{V}) \Big)^k(1-e^{-t/m})\prod_{l=1}^k\frac{1}{m_l! l^{m_l}}
\end{align*}
to find 
\begin{align}\label{eq:FdB-gigantic-1}
\begin{aligned}
&\Bigg|\frac{\kappa}{N}\sum_{p=1}^N\sum_{\substack{m_1,\cdots,m_k\in\mathbb{Z}_{\ge 0}\\ \sum_{\ell=1}^k \ell m_\ell =k}} k! \sin^{(\sum_{l=1}^km_l)}(\theta_p(m,t)-\theta_i(m,t))\\
& \hspace{0.7cm} \times \left(\prod_{l=1}^k \frac{1}{m_l!}\left(\frac{ \theta_p^{(l)}(m,t) -\theta_i^{(l)}(m,t)}{l!}\right)^{m_l}-
\prod_{l=1}^k \frac{1}{m_l!}\left(\frac{ \theta_p^{(l)}(0,t) -\theta_i^{(l)}(0,t)}{l!}\right)^{m_l}\right)\Bigg|\\
& \hspace{0.5cm}  \le 2\kappa k!k(2\kappa+\mathcal{D}(\Omega^0)+\mathcal{D}(\mathcal{V})+\frac{9}{8m})^k(1+\frac tm)^k e^{-t/m}\\
& \hspace{0.7cm} + \frac 32 (k+1)!km\kappa^2e^{2\kappa t} \Big(2\kappa+\mathcal{D}(\Omega^0)+\mathcal{D}(\mathcal{V}) \Big)^k(1-e^{-t/m}))\\
& \hspace{0.7cm}  + \frac 32 (k+1)!km\kappa^2e^{2\kappa t} \Big (2\kappa+\mathcal{D}(\Omega^0)+\mathcal{D}(\mathcal{V}) \Big)^k(1-e^{-t/m})\\
& \hspace{0.5cm}  \le (k+1)! \Big(2\kappa+\mathcal{D}(\Omega^0)+\mathcal{D}(\mathcal{V})+\frac{9}{8m} \Big)^{k+1}(1+\frac tm)^k e^{-t/m}\\
& \hspace{0.7cm}  +\frac 34 (k+1)!k m\kappa e^{2\kappa t} \Big(2\kappa+\mathcal{D}(\Omega^0)+\mathcal{D}(\mathcal{V})+\frac{9}{8m} \Big)^{k+1} \Big (1+\frac tm \Big)^k e^{-t/m}\\
&\hspace{0.7cm}  +\frac 34 (k+1)!k m\kappa e^{2\kappa t}(2\kappa+\mathcal{D}(\Omega^0)+\mathcal{D}(\mathcal{V}))^{k+1}(1-e^{-t/m}).
\end{aligned}
\end{align}
We also estimate
\begin{align}\label{eq:FdB-gigantic-2}
\begin{aligned}
    &\frac{\kappa}{N}\sum_{p=1}^N\sum_{\substack{m_1,\cdots,m_k\in\mathbb{Z}_{\ge 0}\\ \sum_{\ell=1}^k \ell m_\ell =k}} k! \left|\sin^{(\sum_{l=1}^km_l)}(\theta_p(m,t)-\theta_i(m,t)) - \sin^{(\sum_{l=1}^km_l)}(\theta_p(0,t)-\theta_i(0,t))\right|\\
    &\hspace{0.5cm} \times  \prod_{l=1}^k \frac{1}{m_l!}\left(\frac{| \theta_p^{(l)}(0,t) -\theta_i^{(l)}(0,t)|}{l!}\right)^{m_l} \overset{\mathclap{\eqref{eq:C0-approx-rel},\eqref{eq:1st-deriv-abs-bound}}}{\le} m\kappa e^{2\kappa t}k!(2\kappa+\mathcal{D}(\Omega^0)+\mathcal{D}(\mathcal{V}))^{k+1},
\end{aligned}
\end{align}
and
\begin{equation}\label{eq:FdB-gigantic-3}
    \left|m\theta^{(k+2)}_i(0,t)-m\theta^{(k+2)}_j(0,t)\right| ~\overset{\mathclap{\eqref{eq:1st-deriv-abs-bound}}}{\le} (k+1)!2m\kappa (2\kappa+\mathcal{D}(\Omega^0)+\mathcal{D}(\mathcal{V}))^{k+1}.
\end{equation}
We harvest estimates \eqref{eq:FdB-gigantic-1}, \eqref{eq:FdB-gigantic-2}, and \eqref{eq:FdB-gigantic-3} into \eqref{eq:FdB-gigantic} to get 
\begin{align*}
 &\Big|m\theta^{(k+2)}_i(m,t)-m\theta^{(k+2)}_j(m,t)-m\theta^{(k+2)}_i(0,t)+m\theta^{(k+2)}_j(0,t)\\
 & \hspace{3cm} +\theta^{(k+1)}_i(m,t)-\theta^{(k+1)}_j(m,t)-\theta^{(k+1)}_i(0,t)+\theta^{(k+1)}_j(0,t)\Big|\\
& \hspace{0.5cm} \le 2(k+1)!(2\kappa+\mathcal{D}(\Omega^0)+\mathcal{D}(\mathcal{V})+\frac{9}{8m})^{k+1}(1+\frac tm)^k e^{-t/m}\\
&  \hspace{0.7cm}  +\frac 32(k+1)!km\kappa e^{2\kappa t}(2\kappa+\mathcal{D}(\Omega^0)+\mathcal{D}(\mathcal{V})+\frac{9}{8m})^{k+1}(1+\frac tm)^k e^{-t/m}\\
&  \hspace{0.7cm}  +\frac 32(k+1)!km\kappa e^{2\kappa t}(2\kappa+\mathcal{D}(\Omega^0)+\mathcal{D}(\mathcal{V}))^{k+1}(1-e^{-t/m})\\
&  \hspace{0.7cm}  +2m\kappa e^{2\kappa t}k!(2\kappa+\mathcal{D}(\Omega^0)+\mathcal{D}(\mathcal{V}))^{k+1} + (k+1)!2m\kappa (2\kappa+\mathcal{D}(\Omega^0)+\mathcal{D}(\mathcal{V}))^{k+1}\\
&  \hspace{0.5cm}  \le 2(k+1)!(2\kappa+\mathcal{D}(\Omega^0)+\mathcal{D}(\mathcal{V})+\frac{9}{8m})^{k+1}(1+\frac tm)^{k+1} e^{-t/m} +\frac 32(k+2)! m\kappa e^{2\kappa t} \\
&  \hspace{0.7cm}  \times \left((2\kappa+\mathcal{D}(\Omega^0)+\mathcal{D}(\mathcal{V})+\frac{9}{8m})^{k+1}(1+\frac tm)^k e^{-t/m}+(2\kappa+\mathcal{D}(\Omega^0)+\mathcal{D}(\mathcal{V}))^{k+1}\right),
\end{align*}
where we used
\begin{align*}
\begin{aligned}
& \frac 32(k+1)!k+2k!+2(k+1)!  \\
& \hspace{0.5cm} \le \frac 32(k+1)!k+(k+1)!+2(k+1)! =\frac 32(k+1)!k+3(k+1)!=\frac 32 (k+2)!.
\end{aligned}
\end{align*}
However, the initial conditions satisfy
\begin{align*}
    \left|\theta_i^{(k+1)}(m,0)-\theta_j^{(k+1)}(m,0)-\theta_i^{(k+1)}(0,0)+\theta_j^{(k+1)}(0,0)\right|\overset{\eqref{sum-lem:deriv-bound}}{\le} 2k!(2\kappa+\mathcal{D}(\Omega^0)+\mathcal{D}(\mathcal{V})+\frac{9}{8m})^{k+1}.
\end{align*}
At last, we apply the Duhamel principle with two preceding estimates to close the induction:
\begin{align*}
\begin{aligned}
    &\left|\theta^{(k+1)}_i(m,t)-\theta^{(k+1)}_j(m,t)-\theta^{(k+1)}_i(0,t)+\theta^{(k+1)}_j(0,t)\right|\\
    &  \hspace{0.5cm}  \le 2k!(2\kappa+\mathcal{D}(\Omega^0)+\mathcal{D}(\mathcal{V})+\frac{9}{8m})^{k+1}e^{-t/m}\\
    &  \hspace{0.7cm}  +\frac{e^{-t/m}}{m}\int_0^t 2(k+1)!(2\kappa+\mathcal{D}(\Omega^0)+\mathcal{D}(\mathcal{V})+\frac{9}{8m})^{k+1}(1+\frac sm)^kds\\
    &  \hspace{0.7cm}  +\frac{e^{-t/m}}{m}\int_0^t \frac 32 m\kappa \cdot  (k+2)!e^{2\kappa t}(2\kappa+\mathcal{D}(\Omega^0)+\mathcal{D}(\mathcal{V})+\frac{9}{8m})^{k+1}(1+\frac sm)^k ds\\
&  \hspace{0.7cm} +\frac{e^{-t/m}}{m}\int_0^t \frac 32 m\kappa \cdot  (k+2)!e^{2\kappa t}(2\kappa+\mathcal{D}(\Omega^0)+\mathcal{D}(\mathcal{V}))^{k+1}e^{s/m}ds\\
  &  \hspace{0.5cm}  \le 2k!(2\kappa+\mathcal{D}(\Omega^0)+\mathcal{D}(\mathcal{V})+\frac{9}{8m})^{k+1}(1+\frac tm)^{k+1}e^{-t/m}\\
    &  \hspace{0.7cm} + \frac 32 m\kappa \cdot  (k+2)!e^{2\kappa t}(2\kappa+\mathcal{D}(\Omega^0)+\mathcal{D}(\mathcal{V})+\frac{9}{8m})^{k+1}(1+\frac tm)^{k+1}e^{-t/m}\\
&  \hspace{0.7cm} + \frac 32 m\kappa \cdot  (k+2)!e^{2\kappa t}(2\kappa+\mathcal{D}(\Omega^0)+\mathcal{D}(\mathcal{V}))^{k+1}(1-e^{-t/m}).
\end{aligned}
\end{align*}
A calculation similar to \eqref{eq:FdB-gigantic}, \eqref{eq:FdB-gigantic-1}, \eqref{eq:FdB-gigantic-2}, and \eqref{eq:FdB-gigantic-3} gives
\begin{align*}
    &\left|m\theta^{(k+2)}_i(m,t)-m\theta^{(k+2)}_i(0,t)+\theta^{(k+1)}_i(m,t)-\theta^{(k+1)}_i(0,t)\right|\\
&  \hspace{0.5cm} \le (k+1)!(2\kappa+\mathcal{D}(\Omega^0)+\mathcal{D}(\mathcal{V})+\frac{9}{8m})^{k+1}(1+\frac tm)^{k+1} e^{-t/m} +\frac 34 m\kappa \cdot  (k+2)!e^{2\kappa t} \\
&  \hspace{0.7cm}  \times \left((2\kappa+\mathcal{D}(\Omega^0)+\mathcal{D}(\mathcal{V})+\frac{9}{8m})^{k+1}(1+\frac tm)^k e^{-t/m}+(2\kappa+\mathcal{D}(\Omega^0)+\mathcal{D}(\mathcal{V}))^{k+1}\right).
\end{align*}
Compared to \eqref{eq:FdB-gigantic}, there are only half as many terms. Coupled with the initial conditions \eqref{eq:1st-deriv-abs-bound} and $\eqref{eq:2nd-deriv-initial-power-bound}_2$, which give
\[
|\theta_i^{(k+1)}(0,0)|,~|\theta^{(k+1)}_i(m,0)|\le k!\left(2\kappa+2\|\Omega^0\|+2\|\mathcal{V}\|+ \frac 9{8m}\right)^{k+1}.
\]
By the Duhamel principle, we have
\begin{align*}
    &\left|\theta^{(k+1)}_i(m,t)-\theta^{(k+1)}_i(0,t)\right|\\
    &  \hspace{0.5cm}  \le k!(2\kappa+\|\Omega^0\|+\|\mathcal{V}\|+\frac{9}{8m})^{k+1}e^{-t/m}\\
    &  \hspace{0.7cm}  +\frac{e^{-t/m}}{m}\int_0^t (k+1)!(2\kappa+\mathcal{D}(\Omega^0)+\mathcal{D}(\mathcal{V})+\frac{9}{8m})^{k+1}(1+\frac sm)^kds\\
    &  \hspace{0.7cm}  +\frac{e^{-t/m}}{m}\int_0^t \frac 34 m\kappa \cdot  (k+2)!e^{2\kappa t}(2\kappa+\mathcal{D}(\Omega^0)+\mathcal{D}(\mathcal{V})+\frac{9}{8m})^{k+1}(1+\frac sm)^k ds\\
&  \hspace{0.7cm} +\frac{e^{-t/m}}{m}\int_0^t \frac 34 m\kappa \cdot  (k+2)!e^{2\kappa t}(2\kappa+\mathcal{D}(\Omega^0)+\mathcal{D}(\mathcal{V}))^{k+1}e^{s/m}ds\\
    &  \hspace{0.5cm}  \le k!(2\kappa+\|\Omega^0\|+\|\mathcal{V}\|+\frac{9}{8m})^{k+1}(1+\frac tm)^{k+1}e^{-t/m}\\
    &  \hspace{0.7cm}  + \frac 98m\kappa \cdot  (k+1)!e^{2\kappa t}(2\kappa+\mathcal{D}(\Omega^0)+\mathcal{D}(\mathcal{V})+\frac{9}{8m})^{k+1}(1+\frac tm)^{k+1}e^{-t/m}\\
&  \hspace{0.7cm} + \frac 34m\kappa \cdot  (k+2)!e^{2\kappa t}(2\kappa+\mathcal{D}(\Omega^0)+\mathcal{D}(\mathcal{V}))^{k+1}(1-e^{-t/m}),
\end{align*}
as stated. This completes the proof of Proposition \ref{prop:mto0quant}.

%
%
%
%

\end{document}